\theoremstyle{plain}
\newtheorem{theorem}{Theorem}
\newtheorem{lemma}{Lemma}[section]
\newtheorem{proposition}{Proposition}
\newtheorem{corollary}[theorem]{Corollary}
\newtheorem*{question}{Question}
\theoremstyle{definition}
\newtheorem{definition}{Definition}
\newtheorem*{condition}{Condition}
\theoremstyle{remark}
\newtheorem{remark}{Remark}
\def\R{\ensuremath{\mathbb R}}
\def\N{\ensuremath{\mathbb N}}
\def\I{\ensuremath{{\bf 1}}}
\def\e{{\ensuremath{\rm e}}}
\def\U{\ensuremath{\mathcal U}}
\def\B{\ensuremath{\mathcal B}}
\def\M{\ensuremath{\mathcal M}}
\def\l{{\rm Leb}}
\def\P{\ensuremath{\mathcal P}}
\def\p{\ensuremath{\mathbb P}}
\def\QQ{\ensuremath{\mathscr Q}}
\def\n{\ensuremath{n}}
\def\X{\mathcal{X}}
\def\ie{{\em i.e.}, }
\def\dist{\ensuremath{\text{dist}}}
\def\eps{\varepsilon}
\def\cyl{\text{Z}}
\def\spp{\text{SP\negmedspace}_{p,
\theta}}
\def\sppi{\text{SP\negmedspace}_{\mathbf p_i,
\Theta_i}^{\;\;(i)}}
\def\sppj{\text{SP\negmedspace}_{\mathbf p_j,
\Theta_j}^{\;\;(j)}}
\def\mpp{\text{MP\negmedspace}_{p,
\theta}}
\def\AIM{\text{AIM}}
\numberwithin{equation}{section}
\def\size{\hbar}
\begin{document}

\title{Extremal Index, Hitting time statistics and periodicity}

\author[A. C. M. Freitas]{Ana Cristina Moreira Freitas}
\address{Ana Cristina Moreira Freitas\\ Centro de Matem\'{a}tica \&
Faculdade de Economia da Universidade do Porto\\ Rua Dr. Roberto Frias \\
4200-464 Porto\\ Portugal} \email{amoreira@fep.up.pt}

\author[J. M. Freitas]{Jorge Milhazes Freitas}
\address{Jorge Milhazes Freitas\\ Centro de Matem\'{a}tica da Universidade do Porto\\ Rua do
Campo Alegre 687\\ 4169-007 Porto\\ Portugal}
\email{jmfreita@fc.up.pt}
\urladdr{http://www.fc.up.pt/pessoas/jmfreita}

\author[M. Todd]{Mike Todd}
\address{Mike Todd\\ Mathematical Institute\\
University of St Andrews\\
North Haugh\\
St Andrews\\
KY16 9SS\\
Scotland \\} \email{mjt20@st-andrews.ac.uk }
\urladdr{http://www.mcs.st-and.ac.uk/~miket/}

\thanks{ACMF was partially supported by FCT grant SFRH/BPD/66174/2009. JMF was partially supported by FCT grant SFRH/BPD/66040/2009. MT was partially supported by FCT grant SFRH/BPD/26521/2006 and NSF grants DMS 0606343 and DMS 0908093.  All three authors are supported by FCT (Portugal) projects PTDC/MAT/099493/2008 and PTDC/MAT/120346/2010, which are financed by national and European Community structural funds through the programs  FEDER and COMPETE . All three authors were also supported by CMUP, which is financed by FCT (Portugal) through the programs POCTI and POSI, with national
and European Community structural funds.}


\keywords{Extremal Index, Extreme Value Theory, Return Time Statistics, Stationary Stochastic Processes, Periodicity} \subjclass[2000]{
 60G70, 60G10, 37A50,  37B20, 37C25}


\begin{abstract}
The extremal index appears as a parameter in Extreme Value Laws for stochastic processes, characterising the clustering of extreme events.  We apply this idea in a dynamical systems context to analyse the possible Extreme Value Laws for the stochastic process generated by observations taken along dynamical orbits with respect to various measures.  We derive new, easily checkable, conditions which identify Extreme Value Laws with particular extremal indices.  In the dynamical context we prove that the extremal index is associated with periodic behaviour.   The analogy of these laws in the context of Hitting Time Statistics, as studied in the authors' previous works on this topic, is explained and exploited extensively allowing us to prove, for the first time, the existence of Hitting Time Statistics for balls around periodic points. Moreover, for very well behaved systems (uniformly expanding) we completely 	characterise the extremal behaviour by proving that either we have an Extremal Index less than 1 at periodic points or equal to 1 at any other point.   
This theory then also applies directly to general stochastic processes, adding both useful tools to identify the extremal index and giving deeper insight into the periodic behaviour it suggests.
\end{abstract}

\maketitle


\section{Introduction}

The study of extreme or rare events is of great importance in a wide variety of fields and is often tied in with risk assessment. This explains why Extreme Value Laws (EVL) and the estimation of the tail distribution of the maximum of a large number of observations has drawn much attention and become a highly developed subject.

In many practical situations, such as in the analysis of financial markets or climate phenomena, time series can be modelled by a dynamical system which describes its time evolution. The recurrence effect introduced by Poincar\'e, which is present in chaotic systems, is the starting point for a deeper analysis of the limit distribution of the elapsed time until the occurrence of a rare event, which is usually referred to as Hitting Time Statistics (HTS) and Return Time Statistics (RTS).

In \cite{FFT10}, we established the connection between the existence of EVL and HTS/RTS for stochastic processes arising from discrete time chaotic dynamical systems.  This general link allowed us to obtain results of EVL using tools from HTS/RTS and the other way around (this was applied in cases where the extremal index was 1, which is the most classical setting).

The extremal index (EI) $\theta\in [0,1]$ is a measure of clustering of extreme events, the lower the index, the higher the degree of clustering.
In this paper, we give general conditions to prove the existence of an extremal index $0<\theta<1$,
which can be applied to any stationary stochastic process.   Although our results apply to general stationary stochastic processes, we will be particularly interested in the case where the stochastic process arises from a discrete time dynamical system.  This setup will provide not only a huge diversity of examples, but also a motivation for the conditions we propose, as well as a better understanding of their implications. Namely, motivated by the study of stochastic processes arising from chaotic dynamical systems, we associate the extremal index to the occurrence of periodic phenomena. We will illustrate these results by applying them to time series provided by deterministic dynamical systems as well as to cases where the extremal index is already well understood: an Autoregressive (AR) process introduced by Chernick  and two Maximum Moving Averages (MMA) processes.

Because our conditions on the time series data which guarantee an EVL with a given EI are so general, in the dynamical systems context we are able to prove strong results on EVLs around periodic points.  For example, this allows us to consider non-uniformly hyperbolic dynamical systems.  Moreover, coupling these weak conditions with the connection of EVLs to HTS/RTS enables us to consider hits/returns to balls, rather than cylinders. To our knowledge this is the first result of HTS/RTS different from the standard exponential which applies to balls.  We do this first for so-called `Rychlik systems' which are a very general form of uniformly expanding interval map. As explained in Remark~\ref{rem:multidimensional}, these results can easily be extended to some higher dimensional version of  these Rychlik systems. We also give an example of non-uniformly hyperbolic dynamical system: the full quadratic map (also known as the quadratic Chebyshev polynomial), where invariant measures are absolutely continuous w.r.t. Lebesgue or are, more generally, equilibrium states w.r.t. certain potentials.  In future work we will apply these ideas to even more badly behaved non-uniformly hyperbolic systems. 

One of the striking results here is that, at least for well-behaved systems, an extremal index different from 1 can \emph{only} occur at periodic points.  We prove this for the full shift equipped with the Bernoulli measure, (we believe that this last result holds in greater generality, but do not prove that here). Hence, this result raises the following:
\begin{question}
Is it possible to prove the existence of an EI in $(0,1)$ without some sort of periodicity?
\end{question}
In a more concrete formulation:
\begin{question}
For stationary stochastic processes arising from chaotic dynamical systems, is it possible to prove the existence of an EI in $(0,1)$, either for EVL or HTS/RTS around non-periodic points?
\end{question}

We finish this subsection by emphasising that our conditions on time series data also apply beyond that given by dynamical systems.  Indeed the dynamical systems approach suggests that in very general settings we should view data with an extremal index $\theta \in (0, 1)$ as having some underlying periodic phenomenon.   The conditions we use to check this, which are, to our knowledge, the weakest of their kind, and can almost be reduced to simply checking periodicity and mixing.

Throughout this paper the notation $A(u)\sim B(u)$, for $u$ approaching $u_0$, means that $\lim_{u\to u_0} \frac{A(u)}{B(u)}=1$.  When $u=n$ and $u_0=\infty$ we will just write $A(n)\sim B(n)$. The notation $A(n)=o(n)$ means that $\lim_{n\to \infty} \frac{A(n)} n\to0$.
Also, let $[x]$ denote the integer part of the positive real number $x$ and for a set $A$, the notation $A^c$ will denote the complement of the set $A$.

\subsection{Extreme Value Theory for both independent and dependent stochastic processes}
\label{subsec:definitions}

From here on, the sequence $X_0, X_1, X_2,\ldots$ will always denote a stationary stochastic process with marginal distribution function (d.f.) $F$, \ie $F(x)=\p(X_0\leq x)$. Let $$\bar{F}=1-F$$ and $u_F$ denote the right endpoint of the d.f. $F$, \ie
$
u_F=\sup\{x: F(x)<1\}.
$
We have an \emph{exceedance} of the level $u\in\R$ at time $j\in\N$ if the event $\{X_j>u\}$ occurs.
Define a new sequence of random variables (r.v.)  $M_1, M_2,\ldots$ given by
\begin{equation*}
M_n=\max\{X_0,\ldots,X_{n-1}\}.
\end{equation*}
\begin{definition}
We say that we have an \emph{Extreme Value Law} (EVL) for $M_n$ if there is a non-degenerate d.f. $H:\R\to[0,1]$ with $H(0)=0$ and,  for every $\tau>0$, there exists a sequence of levels $u_n=u_n(\tau)$, $n=1,2,\ldots$,  such that
\begin{equation}
\label{eq:un}
  n\p(X_0>u_n)\to \tau,\;\mbox{ as $n\to\infty$,}
\end{equation}
and for which the following holds:
\begin{equation}
\label{eq:EVL-law}
\p(M_n\leq u_n)\to \bar H(\tau),\;\mbox{ as $n\to\infty$.}
\end{equation}
\end{definition}

In the case $X_0, X_1, X_2,\ldots$ are independent and identically distributed (i.i.d.) r.v. then since $\p(M_n\leq u_n)=(F(u_n))^n$ we have $$\log(\p(M_n\leq u_n))=n\log\left(1-\p(X_0>u_n)\right) \sim -n\p(X_0>u_n),$$ which implies that if \eqref{eq:un} holds, then \eqref{eq:EVL-law} holds with $\bar H(\tau)=\e^{-\tau}$ and vice versa (see \cite[Theorem~1.5.1]{LLR83}).

When $X_0, X_1, X_2,\ldots$ are not independent but satisfy some mixing condition $D(u_n)$ introduced by Leadbetter in \cite{L73} then something can still be said about $H$. Let $F_{i_1,\ldots,i_n}$
denote the joint d.f. of $X_{i_1},\ldots,X_{i_n}$, and set
$F_{i_1,\ldots,i_n}(u)=F_{i_1,\ldots,i_n}(u,\ldots,u)$.
\begin{condition}[$D(u_n)$]\label{cond:D} We say that $D(u_n)$ holds
for the sequence $X_0,X_1,\ldots$ if for any integers
$i_1<\ldots<i_p$ and $j_1<\ldots<j_k$ for which $j_1-i_p>m$, and any
large $n\in\N$,
\[
\left|F_{i_1,\ldots,i_p,j_1,\ldots,j_k}(u_n)-F_{i_1,\ldots,i_p}(u_n)
F_{j_1,\ldots,j_k}(u_n)\right|\leq \gamma(n,m),
\]
where $\gamma(n,m_n)\xrightarrow[n\to\infty]{}0$, for some sequence
$m_n=o(n)$.
\end{condition}
If $D(u_n)$ holds for $X_0,X_1,\ldots$ and the limit \eqref{eq:EVL-law} exists for some $\tau>0$ then there exists $0\leq\theta\leq1$ such that $\bar H(\tau)=\e^{-\theta\tau}$ for all $\tau>0$ (see \cite[Theorem~2.2]{L83} or \cite[Theorem~3.7.1]{LLR83}).
\begin{definition}
We say that $X_0,X_1,\ldots$ has an \emph{Extremal Index} (EI) $0\leq\theta\leq1$ if we have an EVL for $M_n$ with $\bar H(\tau)=\e^{-\theta\tau}$ for all $\tau>0$.
\end{definition}

The notion of the EI was latent in the work of Loynes \cite{L65} but was established formally by Leadbetter in \cite{L83}. It gives a measure of the strength of the dependence of $X_0,X_1,\ldots$, so that $\theta=1$ indicates that the process has practically no memory while $\theta=0$, conversely, reveals extremely long memory.  Another way of looking at the EI is that it gives some indication on how much exceedances of high levels have a tendency to ``cluster''. Namely, for $\theta>0$ this interpretation of the EI is that $\theta^{-1}$ is the mean number of exceedances of a high level in  a cluster of large observations, \ie is the ``mean size of the clusters''.

\begin{remark}
\label{rem:sequences-un}
The sequences of real numbers $u_n=u_n(\tau)$, $n=1,2,\ldots$, are usually taken to be one parameter linear families such as $u_n=a_n y +b_n$, where $y\in\R$ and $a_n>0$, for all $n\in\N$. Observe that $\tau$ depends on $y$ through $u_n$ and, in fact, in the i.i.d. case, depending on
the tail of the marginal d.f. $F$, we have that $\tau=\tau(y)$ is
of one of the following three types (for some $\alpha>0$):
\begin{equation*}\tau_1(y)=\e^{-y} \text{ for } y \in
\mathbb{R},\quad \tau_2(y)=y^{-\alpha} \text{ for } y>0\quad \text{ and }\quad
\tau_3(y)=(-y)^{\alpha} \text{ for } y\leq0.
\end{equation*}
\end{remark}

\subsection{Hitting and return time statistics}

Consider a deterministic discrete time dynamical system $(\X,\B,\mu,f)$, where $\X$ is topological space, $\mathcal B$ is the
Borel $\sigma$-algebra, $f:\X\to\X$ is a measurable map and $\mu$ is an $f$-invariant probability measure, \ie $\mu(f^{-1}(B))=\mu(B)$, for all $B\in\B$. One can think of $f:\X\to\X$ as the evolution law that establishes how time affects the transitions from one state in $\X$ to another.

Consider now a set $A\in\B$ and a new r.v. that we refer to as \emph{first hitting time} to $A$ and denote by $r_A:\X\to\N\cup\{+\infty\}$ where
\begin{equation*}
r_A(x)=\min\left\{j\in\N\cup\{+\infty\}:\; f^j(x)\in A\right\}.
\end{equation*}
Given a sequence of sets $\{U_n\}_{n\in \N}$ so that
$\mu(U_n) \to 0$ we consider the sequence of r.v. $r_{U_1}, r_{U_2},\ldots$ If under suitable normalisation $r_{U_n}$ converges in distribution to some non-degenerate d.f. $G$ we say that the system has \emph{Hitting Time Statistics} (HTS) $G$ for $\{U_n\}_{n\in\N}$. For systems with `good mixing properties', $G$ is the standard exponential d.f., in which case, we say that we have \emph{exponential HTS}.

We say that the system has HTS $G$ to balls at $\zeta$ if for any sequence $(\delta_n)_{n\in \N}\subset \R^+$ such that $\delta_n\to 0$ as $n\to \infty$ we have HTS $G$ for $(U_n)_n=(B_{\delta_n}(\zeta))_n$.

Let $\P_0$ denote a partition of $\X$. We define the corresponding pullback partition $\P_n=\bigvee_{i=0}^{n-1} f^{-i}(\P_0)$, where $\vee$ denotes the join of partitions. We refer to the elements of the partition $\P_n$ as \emph{cylinders of order $n$}. For every $\zeta\in\X$, we denote by $\cyl_n[\zeta]$ the cylinder of order $n$ that contains $\zeta$. For some $\zeta\in \X$ this cylinder may not be unique, but we can make an arbitrary choice, so that $\cyl_n[\zeta]$ is well defined. We say that the system has HTS $G$ to cylinders at $\zeta$ if  we have HTS $G$ for $U_n=\cyl_n(\zeta)$.

Let $\mu_A$ denote the conditional measure on $A\in\B$, \ie $\mu_A:=\frac{\mu|_A}{\mu(A)}$. Instead of starting  somewhere in the whole space $\X$, we may want to start in $U_n$ and study the fluctuations of the normalised return time to $U_n$ as $n$ goes to infinity, \ie for each $n$, we look at the random variables $r_{U_n}$ as being defined in the probability space $(U_n,\B\cap U_n, \mu_{U_n})$ and wonder if, under some normalisation, they converge in distribution to some non-degenerate d.f. $\tilde G$, in which case, we say that the system has \emph{Return Time Statistics} (RTS) $\tilde G$ for $\{U_n\}_{n\in\N}$. The existence of exponential HTS is equivalent to the existence of exponential RTS. In fact, according to the Main Theorem in \cite{HLV05}, a system has HTS $G$ if and only if it has RTS $\tilde G$ and
\begin{equation}
\label{eq:HTS-RTS}
G(t)=\int_0^t(1-\tilde G(s))\,ds.
\end{equation}

Regarding normalising sequences to obtain HTS/RTS, we recall Kac's Lemma, which states that the expected value of $r_A$ with respect to $\mu_A$ is  $\int_A r_A~d\mu_A =1/\mu(A)$.  So in studying the fluctuations of $r_A$ on $A$, the relevant normalising factor should be $1/\mu(A)$.
\begin{definition}
\label{def:HTS/RTS}
Given a sequence of sets $(U_n)_{n\in \N}$ so that
$\mu(U_n) \to 0$, the system has \emph{HTS}
$G$ for $(U_n)_{n\in \N}$ if for all $t\ge 0$
\begin{equation}\label{eq:def-HTS-law}
\mu\left(r_{U_n}\leq\frac t{\mu(U_n)}\right)\to G(t) \;\mbox{ as $n\to\infty$,}
\end{equation}
and the system has \emph{RTS}
$\tilde G$ for $(U_n)_{n\in \N}$ if for all $t\ge 0$
\begin{equation}\label{eq:def-RTS-law}
\mu_{U_n}\left(r_{U_n}\leq\frac t{\mu(U_n)}\right)\to\tilde G (t)\;\mbox{ as $n\to\infty$}.
\end{equation}
\end{definition}

The theory of HTS/RTS laws is now a well developed theory, applied first to cylinders and hyperbolic dynamics, and then extended to balls and also to non-uniformly hyperbolic systems. We refer to \cite{C00} and \cite{S09} for very nice reviews as well as many references on the subject.  (See also \cite{AG01}, where the focus is more towards a finer analysis of uniformly hyperbolic systems.)  Since the early papers \cite{P91,H93}, several different approaches have been used to prove HTS/RTS: from the analysis of adapted Perron-Frobenius operators as in \cite{H93}, the use of inducing schemes as in \cite{BSTV03}, to the relation between recurrence rates and dimension as explained in \cite[Section~4]{S09}.

For many mixing systems it is known that the HTS/RTS are standard exponential around almost every point. Among these systems we note the following:
Markov chains \cite{P91}, Axiom A diffeomorphisms \cite{H93}, uniformly expanding maps of the interval \cite{C96}, 1-dimensional non-uniformly expanding maps \cite{HSV99,BSTV03,BV03,BT09}, partially hyperbolic dynamical systems \cite{D04}, toral automorphisms \cite{DGS04},  higher dimensional non-uniformly hyperbolic systems (including H\'enon maps) \cite{CC12}.

However, even for systems with good mixing properties, it is known at least since \cite{H93} that at some special (periodic) points,  similar distributions for the HTS/RTS (for cylinders) with an exponential parameter $0<\theta<1$ (\ie $1-G(t)=\e^{-\theta t}$) apply. This subject was studied, also in the cylinder context, in \cite{HV09}, where the sequence of successive returns to neighbourhoods of these points was proved to converge to a compound Poisson process.

\subsection{The connection between EVL and HTS/RTS}
We start by explaining what we mean by stochastic processes arising from discrete time dynamical systems. Take a system $(\X,\B,\mu,f)$ and consider the time series $X_0, X_1, X_2,\ldots$ arising from such a system simply by evaluating a given random variable $\varphi:\X\to\R\cup\{\pm\infty\}$ along the orbits of the system, or in other words, the time evolution given by successive iterations by $f$:
\begin{equation}
\label{eq:def-stat-stoch-proc-DS} X_n=\varphi\circ f^n,\quad \mbox{for
each } n\in {\mathbb N}.
\end{equation}
Clearly, $X_0, X_1,\ldots$ defined in this way is not an independent sequence.  However, $f$-invariance of $\mu$ guarantees that this stochastic process is stationary. We assume that $\varphi$ achieves a global maximum at $\zeta\in\X$ and the event $\{x\in\X:\; \varphi(x)>u\}=\{X_0>u\}$ corresponds to a topological ball ``centred'' at $\zeta$. EVLs for the partial maximum of such sequences have been proved directly in the recent papers \cite{C01,FF08,FFT10,HNT12,GHN11,FFT11}. We highlight the pioneer work of Collet \cite{C01} for the innovative ideas introduced. The dynamical systems covered in these papers include non-uniformly hyperbolic 1-dimensional maps (in all of them), higher dimensional non-uniformly expanding maps in \cite{FFT10}, suspension flows in \cite{HNT12}, billiards and Lozi maps in \cite{GHN11}.

In \cite{FFT10}, we formally established the link between EVL and HTS/RTS (for balls) of stochastic processes given by \eqref{eq:def-stat-stoch-proc-DS}. Essentially, we proved that if such time series have an EVL $H$ then the system has HTS $H$ for balls ``centred'' at $\zeta$ and vice versa. Recall that having HTS $H$ is equivalent to say that the system has RTS $\tilde H$, where $H$ and $\tilde H$ are related by \eqref{eq:HTS-RTS}. This was based on the elementary observation that for stochastic processes given by \eqref{eq:def-stat-stoch-proc-DS} we have:
\begin{equation}
\label{eq:rel-Mn-r}
\{M_n\leq u\}=\{r_{\{X_0>u\}}>n\}.
\end{equation}
We exploited this connection to prove EVL using tools from HTS/RTS and the other way around. In  \cite{FFT11}, we carried the connection further to include the cases where the invariant measure $\mu$ may not be absolutely continuous with respect to Lebesgue measure and also to understand HTS/RTS for cylinders rather than balls in terms of EVL. To achieve the latter we introduced the notion of \emph{cylinder EVL} which essentially requires that the limits \eqref{eq:un} and \eqref{eq:EVL-law} exist only for particular time subsequences $\{\omega_j\}_{j\in\N}$ of $\{n\}_{n\in\N}$ (see Section~\ref{sec:cyl}).

Hence, under the conditions of \cite[Theorem~2]{FFT10}, when $X_0, X_1, X_2,\ldots$ has an EI $\theta<1$ then we have HTS for balls $G$ given by
\begin{equation}
\label{eq:HTS-EI}
G(\tau)=1-\e^{-\theta \tau}.
\end{equation}
Using \eqref{eq:rel-Mn-r} plus the integral relation \eqref{eq:HTS-RTS} and arguing as in the proof of \cite[Theorem~2]{FFT10}, we have RTS for balls $\tilde G$ that can be written as:
\begin{equation}
\label{eq:RTS-EI}
\tilde G(\tau)=\lim_{n\to\infty}\mu(M_n>u_n | X_0>u_n )= (1-\theta)+\theta(1-\e^{-\theta \tau}),
\end{equation}
or in other words: the return time law is the convex combination of a Dirac law at zero and an exponential law of average $\theta^{-1}$ where the weight is the EI $\theta$ itself.

As a consequence of this relation new light can be brought to the work of Galves and Schmitt \cite{GS90} who introduced a short correction factor $\lambda$ in order to get exponential HTS, that was then studied later in great detail by Abadi \emph{et al.} \cite{AG01, A04, A06, AV08, AVe09, AS11}, and which, in case of being convergent, can be seen as the EI. This means that we now have two different perspectives from which to look at the aforementioned papers, as well as to look at the work of those who developed the probabilistic theory of the EI such as \cite{L83, LLR83, O87, LR88, HHL88, LN89, CHM91}. Just to give an example of the advantage of realising this connection, we observe that O'Brien's formula for the EI in \cite{O87}, which is widely used in the estimation of the EI, can be easily derived from formula \eqref{eq:RTS-EI} for the RTS.

\subsection{Extreme Value Laws in the absence of clustering}
\label{subsec:no-clustering}
In this subsection we recall some of the results which imply the existence of EVLs in the \emph{absence} of clustering, which means the EI is 1. We do so to motivate and provide a better understanding of the conditions we propose in Section~\ref{sec:EI-Periodicity}. 

We start by recalling a condition proposed by Leadbetter for general stochastic processes which imposes some sort of independence on the short range that prevents the appearance of clustering.
Supposing that $D(u_n)$ holds, let $(k_n)_{n\in\N}$ be a sequence of integers such that
\begin{equation}
\label{eq:kn-sequence-1}
k_n\to\infty\quad \mbox{and}\quad  k_n t_n = o(n).
\end{equation}
\begin{condition}[$D'(u_n)$]\label{cond:D'} We say that $D'(u_n)$
holds for the sequence $X_0,X_1,X_2,\ldots$ if there exists a sequence $\{k_n\}_{n\in\N}$ satisfying \eqref{eq:kn-sequence-1} and such that
\begin{equation}
\label{eq:D'un}
\lim_{n\rightarrow\infty}\,n\sum_{j=1}^{[n/k_n]}\p( X_0>u_n,X_j>u_n)=0.
\end{equation}
\end{condition}
According to \cite[Theorem~1.2]{L83}, if conditions $D(u_n)$ and $D'(u_n)$ hold for $X_0, X_1,\ldots$ then there exists an EVL for $M_n$ and $H(\tau)=1-e^{-\tau}$.

However, when one considers stochastic processes arising from dynamical systems such as in  \eqref{eq:def-stat-stoch-proc-DS}, in practice condition $D(u_n)$ can not be verified unless the system satisfies some strong uniformly mixing condition such as $\alpha$-mixing (see \cite{B05} for a definition), and even in these cases it can only be verified for certain subsequences of $\{n\}_{n\in\N}$, which means that the limit laws only hold for cylinders.  For that reason, based on the work of Collet \cite{C01}, in \cite{FF08a}  we proposed a condition we called $D_2(u_n)$ which is much weaker than $D(u_n)$, and which follows from sufficiently fast decay of correlations, thus allowing us to obtain the results for balls rather than cylinders.

We remark that rates of decay of correlations are nowadays very well known for a wide variety of systems including non-uniformly hyperbolic systems admitting a Young tower (see \cite{Y98,Y99}).

\begin{condition}[$D_2(u_n)$]\label{cond:D2} We say that $D_2(u_n)$ holds for the sequence $X_0,X_1,\ldots$ if for all $\ell,t$
and $n$
\begin{align*}
|\p\left\{X_0>u_n\cap
  \max\{X_{t},\ldots,X_{t+\ell-1}\leq u_n\}\right\}-\p\{X_0>u_n\}
  \p\{M_{\ell}\leq u_n\}|\leq \gamma(n,t),
\end{align*}
where $\gamma(n,t)$ is decreasing in $t$ for each $n$ and
$n\gamma(n,t_n)\to0$ when $n\rightarrow\infty$ for some sequence
$t_n=o(n)$.
\end{condition}

Observe that while $D(u_n)$ imposes some rate for the independence of two blocks of r.v. separated by a time gap which is independent of the size of the blocks, condition $D_2(u_n)$ requires something similar but only when the first block is reduced to one r.v. only. This detail turns out to be crucial when proving $D_2(u_n)$ from decay of correlations as can be seen in \cite[Section~2]{FF08a}. The interesting fact is that we can replace $D(u_n)$ by $D_2(u_n)$ in \cite[Theorem~1.2]{L83} and the conclusion still holds. In fact, according to \cite[Theorem~1]{FF08a}, if conditions $D_2(u_n)$ and $D'(u_n)$ hold for $X_0, X_1,\ldots$ then there exists an EVL for $M_n$ and $H(\tau)=1-e^{-\tau}$. The idea is that condition $D'(u_n)$, instead of being used once as in the original proof of Leadbetter, is used twice: in one of the instances it is used in conjunction with $D_2(u_n)$ to produce the same effect as $D(u_n)$ alone.

Basically this means that as long as you start with a dynamical system with sufficiently fast decay of correlations you only have to prove $D'(u_n)$ to show the existence of exponential EVL or HTS/RTS.   

\subsection{Structure of the paper}

The paper is organised as follows. In Section~\ref{sec:EI-Periodicity} we give conditions to prove the existence of an EI for general stochastic processes; initially this is applied to `first order' clustering behaviour and then later to higher order clustering. In Section~\ref{sec:EI-DS} we give a very general introduction to the dynamical systems and accompanying measures we will be studying.  We also explain the link between EVL and HTS and state general theorems for those laws in this context.  In Section~\ref{sec:dyn egs} we give some concrete examples of dynamical systems, measures and observables yielding EVLs with EI in $(0,1)$.  These examples are so-called Rychlik systems as well as the full quadratic map.  Section~\ref{sec:cyl} is a short section explaining the relevant conditions required to guarantee an EVL for returns to cylinders rather than balls, while Section~\ref{sec:ex ind imples per} shows that in that context we can completely characterise all possible EVLs for simple dynamical systems.  Finally in the appendix we show how our conditions apply to various standard types of random variables not necessarily produced by a dynamical system, namely, two MMA processes and one AR(1) introduced by Chernick in \cite{C81}.

\section{Extremal index and periodicity}
\label{sec:EI-Periodicity}

In this section we give conditions that can be applied to any stationary stochastic process and which allow us to prove the existence of an EI by realising the presence of one or more underlying periodic phenomenon. To explain what is happening here, and to underline the motivation, we first turn to the main stream of the paper which is the dynamics around repelling periodic points. Our strategy is essentially to replace the role of ``exceedances'' (that correspond to entrances in balls) by what we shall call ``escapes'' (that correspond to entrances in annuli), and then reduce to the usual strategy when no clustering occurs,  described in Section~\ref{subsec:no-clustering}.  

\subsection{Motivation from periodic dynamics}
\label{subsec:motivation}

We consider a model case: the stochastic processes defined by \eqref{eq:def-stat-stoch-proc-DS} when $\varphi$ achieves a global maximum at a repelling periodic point $\zeta\in \X$, of prime period $p\in\N$, which is also a Lebesgue density point of an invariant measure $\mu$, where $\mu$ is assumed to be absolutely continuous with respect to Lebesgue. We postpone the exact meaning of all this to Section~\ref{sec:EI-DS} but keep the following facts: 
\begin{enumerate}
\item we assume that for $u$ sufficiently large, $\{X_0>u\}$ corresponds to a topological ball centred at $\zeta$;

\item the periodicity of $\zeta$ implies that for all large $u$, $\{X_0>u\}\cap f^{-p}(\{X_0>u\})\neq\emptyset$ and the fact that the prime period is $p$ implies that $\{X_0>u\}\cap f^{-j}(\{X_0>u\})=\emptyset$ for all $j=1,\ldots,p-1$. 

\item \label{item:repeller} the fact that $\zeta$ is repelling means that we have backward contraction implying that $\bigcap_{j=0}^i f^{-j}(X_0>u)$ is another ball of smaller radius around $\zeta$ and $\l(\bigcap_{j=0}^i f^{-j}(X_0>u))\sim(1-\theta)^i\l(X_0>u)$, for all $u$ sufficiently large and some $0<\theta<1$;

\item \label{item:density-point} the fact that $\zeta$ is a Lebesgue density point of $\mu$ implies that we can replace $\l$ by $\mu$ in the previous item.
 
\end{enumerate}

Note that $Q(u)=\{X_0>u,X_p\leq u\}=\{X_0>u\}\setminus f^{-p}(\{X_0>u\})$ can be seen as an annulus centred at $\zeta$ that corresponds to the points that after $p$ steps manage to escape from $\{X_0>u\}$. Moreover, for $u$ large we have $\mu(Q(u))\sim\theta\mu(X_0>u)$.

Following the work of Hirata \cite{H93} on Axiom A diffeomorphisms, it is known that around periodic points there is a parameter less than 1 in the Hitting Times distribution, which in light of the connection between EVL and HTS can be seen as the Extremal Index. However, this has only been checked for cylinders. The approach we propose here allows us to finally establish the result for balls, and for non-Axiom A systems. 

The main obstacle when dealing with periodic points is that they create plenty of dependence in the short range. In particular, using properties \eqref{item:repeller} and \eqref{item:density-point} we have that for all $u$ sufficiently large
$$
\mu(\{X_0>u\}\cap \{X_p>u\})\sim(1-\theta)\mu(X_0>u).
$$
which implies that $D'(u_n)$ is not satisfied, since for the levels $u_n$ as in \eqref{eq:un} it follows that
$$
n\sum_{j=1}^{[n/k_n]}\mu(X_0>u_n, X_j>u_n)\geq n\mu(X_0>u_n, X_p>u_n)\xrightarrow[n\to\infty]{}(1-\theta)\tau.
$$
Recalling the discussion at the end of Section~\ref{subsec:no-clustering}, condition $D'(u_n)$ was essential to allow the replacement of $D(u_n)$ by $D_2(u_n)$ in order to use decay of correlations to get the result. To overcome this difficulty around periodic points we make a key observation that roughly speaking tells us that around periodic points one just needs to replace the ball $\{X_0>u_n\}$ by the annulus $Q(u_n)$: then much of the analysis works out as in the absence of clustering.  

To be more precise, let $\mathscr Q_n(u_n):=\bigcap_{j=0}^{n-1} f^{-j}(Q(u_n)^c)$. Note that while the occurrence of the event $\{M_n\leq u_n\}$ means that no entrance in the ball $\{X_0>u_n\}$ has occurred up to time $n$, the occurrence of $\mathscr Q_n(u_n)$ means that no entrance in the annulus $Q(u_n)$ has occurred up to time $n$. 
\begin{proposition}
\label{prop:ball-annulus} Let $X_0, X_1,,\ldots$ be a stochastic process defined by \eqref{eq:def-stat-stoch-proc-DS} where $\varphi$ achieves a global maximum at a repelling periodic point $\zeta\in \X$, of prime period $p\in\N$, so that conditions (1) to (4) above hold. Let $(u_n)_n$ be a sequence of levels such that \eqref{eq:un} holds. Then,
$$
\lim_{n\to\infty}\mu(M_n\leq u_n)=\lim_{n\to\infty}\mu(\mathscr Q_n(u_n)).
$$   
\end{proposition}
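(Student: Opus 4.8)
The plan is to show that $\mu(\mathscr Q_n(u_n))\ge\mu(M_n\le u_n)$ for every $n$ and that the difference $\mu(\mathscr Q_n(u_n))-\mu(M_n\le u_n)$ tends to $0$; this immediately gives the claimed equality of the two limits (in particular, if one exists, so does the other). The lower bound is the trivial inclusion $\{M_n\le u_n\}\subseteq\mathscr Q_n(u_n)$: since $Q(u_n)\subseteq\{X_0>u_n\}$ we have $f^{-j}(Q(u_n))\subseteq\{X_j>u_n\}$ for every $j$, so passing to complements and intersecting over $j=0,\dots,n-1$ gives $\{M_n\le u_n\}=\bigcap_{j=0}^{n-1}\{X_j\le u_n\}\subseteq\bigcap_{j=0}^{n-1}f^{-j}(Q(u_n)^c)=\mathscr Q_n(u_n)$. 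Because this inclusion holds, the difference equals $\mu\big(\mathscr Q_n(u_n)\cap\{M_n>u_n\}\big)$, and it remains to prove $\mu\big(\mathscr Q_n(u_n)\cap\{M_n>u_n\}\big)\to0$.

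The core of the argument is to decompose this set according to the \emph{first exceedance time} $r:=\min\{j\ge0:X_j>u_n\}$, which on $\{M_n>u_n\}$ takes values in $\{0,\dots,n-1\}$. The key observation is that for $x\in\mathscr Q_n(u_n)$ the requirement $f^j(x)\notin Q(u_n)$ is exactly the implication ``$X_j(x)>u_n\ \Rightarrow\ X_{j+p}(x)>u_n$'', valid for every $j\le n-1$. Starting from $X_r(x)>u_n$ and iterating this implication at $j=r,r+p,r+2p,\dots$ yields $X_{r+kp}(x)>u_n$ for all $0\le k\le\kappa(r)$, where $\kappa(r):=\lfloor(n-1-r)/p\rfloor+1\ge(n-1-r)/p$; equivalently $f^{r}(x)\in\bigcap_{k=0}^{\kappa(r)}f^{-kp}(\{X_0>u_n\})$. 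This is the precise sense in which the heuristic ``an orbit that enters the ball $\{X_0>u_n\}$ without ever meeting the annulus $Q(u_n)$ must fall into an ever deeper neighbourhood of $\zeta$'' is made rigorous, and it is here that the periodicity property (2) is implicitly used (it guarantees $Q(u_n)$ is the right set to exclude).

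By the repelling hypothesis (3) (backward contraction of $f^p$ near $\zeta$) together with the density-point hypothesis (4), for $n$ large the set $\bigcap_{k=0}^{\kappa(r)}f^{-kp}(\{X_0>u_n\})$ is a small ball around $\zeta$ with $\mu$-measure at most $C(1-\theta)^{\kappa(r)}\mu(X_0>u_n)\le C(1-\theta)^{(n-1-r)/p}\mu(X_0>u_n)$, for a constant $C$ independent of $n$ and $r$. Since the first-exceedance events (one for each $r\in\{0,\dots,n-1\}$) are pairwise disjoint and $\mu$ is $f$-invariant, summing over $r$ and re-indexing by $s=n-1-r$ gives
\[
\mu\big(\mathscr Q_n(u_n)\cap\{M_n>u_n\}\big)\le C\,\mu(X_0>u_n)\sum_{s\ge0}(1-\theta)^{s/p}=\frac{C\,\mu(X_0>u_n)}{1-(1-\theta)^{1/p}}\xrightarrow[n\to\infty]{}0,
\]
because $\mu(X_0>u_n)\sim\tau/n\to0$ by \eqref{eq:un}. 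Together with the inclusion of the first paragraph this proves Proposition~\ref{prop:ball-annulus}. (If one prefers to avoid invoking a contraction bound that is uniform in the number of iterates, the same conclusion follows by splitting the sum at $r=n-a_n$ for any auxiliary sequence $a_n\to\infty$ with $a_n=o(n)$: the range $r\ge n-a_n$ contributes at most $a_n\,\mu(X_0>u_n)\to0$, and the range $r<n-a_n$ at most $n\cdot C(1-\theta)^{a_n/p}\mu(X_0>u_n)\sim C\tau(1-\theta)^{a_n/p}\to0$.)

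The one genuinely delicate point I expect is the measure estimate for $\bigcap_{k=0}^{\kappa}f^{-kp}(\{X_0>u_n\})$: one must know that these nested pullbacks around $\zeta$ shrink geometrically with ratio $1-\theta$ and a multiplicative constant that is uniform both over $\kappa$ and over all sufficiently large $n$ (equivalently all $u_n$ close to $u_F$), and that for such small level sets no spurious component away from $\zeta$ contributes. This is exactly the content that hypotheses (3) and (4) are designed to supply, so in the present setting it can simply be quoted; everything else is the elementary ``exceedances become escapes'' bookkeeping carried out above.
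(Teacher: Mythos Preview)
Your proof is correct and follows essentially the same route as the paper's: the inclusion $\{M_n\le u_n\}\subset\mathscr Q_n(u_n)$, the decomposition of the difference by the first exceedance time, the observation that absence of escapes forces $X_{r+kp}>u_n$ for all $k$ up to $\lfloor(n-1-r)/p\rfloor$, and then the geometric decay from hypotheses (3)--(4). The only cosmetic difference is in the final summation: the paper groups the indices $i=0,\dots,n-1$ according to the common value of $s_i=\lfloor(n-1-i)/p\rfloor$ (each value occurring at most $p$ times) to obtain $p\sum_{\kappa\ge0}(1-\theta)^{\kappa}\mu(X_0>u_n)$, whereas you re-index by $s=n-1-r$ and bound by $\sum_{s\ge0}(1-\theta)^{s/p}$; both give a finite constant times $\mu(X_0>u_n)\to0$.
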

\begin{proof}
Clearly $$\{M_n\leq u_n\}\subset\QQ_{n}(u_n).$$ Next, note that if $\QQ_{n}(u_n)\setminus \{M_n\leq u_n\}$ occurs, then you must enter the ball $\{X_0>u_n\}$ at some point which means we may define first time it happens by $i=\inf\{j\in\{0,1,\ldots n-1\}:\; X_j>u_n\}$ and let $s_i=[ \frac {n-1-i}{p}]$. However, since $\QQ_{p,0,n}(u_n)$ does occur, you must never enter the annulus $Q_(u_n)$ which is the only way out of the ball $\{X_0>u_n\}$. Hence, once you enter the ball you must never leave it, which means that $f^{-i}\left(\cap_{j=1}^{s_i}f^{-jp}(X_0>u_n)\right)$ must occur. 
 Consequently,
\[
\QQ_{p,0,n}(u_n)\setminus \{M_n\leq u_n\}\subset \bigcup_{i=0}^{n-1}f^{-i}\left(\cap_{j=0}^{s_i}f^{-jp}(X_0>u_n)\right).
\]
It follows by stationarity, properties \eqref{item:repeller}, \eqref{item:density-point} above and \eqref{eq:un} that
\begin{align*}
  \mu(\QQ_{p,0,n}(u_n))-\mu(\{M_n\leq u_n\})&\leq \sum_{i=0}^{n-1}\mu\left( f^{-i}\left(\cap_{j=0}^{s_i}f^{-jp}(X_0>u_n)\right)\right)\\
    &\leq p\sum_{\kappa=0}^{[n/p]}\mu\left(\cap_{j=0}^{\kappa}f^{-jp}(X_0>u_n)\right)\\
    &\lesssim p\sum_{\kappa=0}^{\infty}(1-\theta)^\kappa\mu\left(X_0>u_n\right)\xrightarrow[n\to\infty]{}0.
  \end{align*}
\end{proof}
The proposition above is essentially saying that if the sequence of levels is well chosen then, around repelling periodic points, in the limit, the probability of there being no entrances in the ball $\{X_0>u_n\}$ equals the probability of there being no entrances in the annulus $Q(u_n)$. Then the idea to cope with clustering caused by periodic points is to adapt conditions $D_2(u_n)$ and $D'(u_n)$, letting annuli replace balls. In order to make the theory as general as possible, motivated by the above considerations for stochastic processes generated by dynamical systems around periodic points, we will propose some   abstract conditions to prove the existence of an EI less than 1 for general stationary stochastic processes.  

\subsection{Existence of an EI due to the presence of periodic phenomena}
\label{ssec:spp etc}
 
We start by an abstract condition designed to capture the essential properties (1)-(4) from Section~\ref{subsec:motivation} in order to guarantee that the conclusion of Proposition~\ref{prop:ball-annulus} holds for general stochastic processes.  
It imposes some type of periodic behaviour of period $p\in\N$ plus a summability requirement. For that reason we shall denote it by $\spp$ which stands for Summable Periodicity of period $p$.   To state the condition we will use a sequence of levels $(u_n)_n$ as in \eqref{eq:un}.

\begin{condition}[$\spp(u_n)$]\label{cond:SP} We say that  $X_0,X_1,X_2,\ldots$ satisfies condition $\spp(u_n)$ for $p\in \N$ and $\theta\in [0,1]$ if
\begin{equation}
\label{cond:periodicity}
\lim_{n\to \infty}\sup_{1\le j<p}\p(X_j>u_n| X_0>u_n)=0 \quad \mbox{ and}\quad \lim_{n\to \infty} \p(X_p>u_n| X_0>u_n)\to (1-\theta)
\end{equation}
and moreover
\begin{equation}
\label{cond:summability}
\lim_{n\to \infty}\sum_{i=0}^{[\frac{n-1}p]} \p(X_0>u_n, X_p>u_n, X_{2p}>u_n,\ldots,X_{ip}>u_n)=0.
\end{equation}
\end{condition}
Condition \eqref{cond:periodicity}, when $\theta<1$, imposes some sort of periodicity of period $p$ among the exceedances of high levels $u_n$, since if at some point the process exceeds the high level $u_n$, then, regardless of how high $u_n$ is, there is always a strictly positive probability of another exceedance occurring at the (finite) time $p$.  In fact, if the process is generated by a deterministic dynamical system $f:\X\to \X$ as in    \eqref{eq:def-stat-stoch-proc-DS} and $f$ is continuous then \eqref{cond:periodicity} implies that $\zeta$ is a periodic point of period $p$, \ie $f^p(\zeta)=\zeta$.

We also state a stronger condition, which is often simpler to check than $\spp(u_n)$ and which requires, besides the periodicity, some type of Markov behaviour that which immediately implies the summability condition \eqref{cond:summability}. We call it $\mpp$ which stands for Markovian Periodicity.  We will check this condition rather than $\spp(u_n)$ in the applications presented in Sections~\ref{sec:EI-DS} and \ref{sec:dyn egs} as well as in Appendix~\ref{sec:AR1}.

\begin{condition}[$\mpp(u_n)$]\label{cond:PM}We say that  $X_0,X_1,X_2,\ldots$ satisfies the condition $\mpp(u_n)$ for $p\in \N$ and $\theta\in [0,1]$ if
\begin{gather}
\label{cond:Markovian-Periodicity}
\begin{split}
\lim_{n\to \infty}\sup_{1\le j<p} & \p(X_j>u_n| X_0>u_n)=0  \mbox{ and }\\
& \lim_{n\to \infty} \sup_i\frac{\p(X_p>u_n, X_{2p}>u_n,\ldots,X_{ip}>u_n | X_0>u_n)}{(1-\theta)^i} =1.\end{split}
\end{gather}
\end{condition}
Note that if besides condition \eqref{cond:periodicity}, the stationary stochastic process satisfies the following Markovian property:
\begin{equation}
\label{cond:Markov}
\p(X_{ip}>u | X_{(i-1)p}>u, \ldots, X_0>u)=\p(X_{ip}>u | X_{(i-1)p}>u), \quad \mbox{for all $i\in\N$},
\end{equation}
then it can easily be seen by an induction argument that condition $\mpp(u_n)$ holds.

Assuming that $\spp(u_n)$ holds, for $i,s,\ell\in\N\cup\{0\}$, we define the events:
\begin{equation*}
Q_{p,i}(u):=\{X_i>u, X_{i+p}\leq u\},\; Q_{p,i}^*(u):=\{X_i>u\}\setminus Q_{p,i}(u) \;\mbox{and } \QQ_{p,s,\ell}(u)=\bigcap_{i=s}^{s+\ell-1} Q_{p,i}^c(u).
\end{equation*}
Assuming $\theta<1$, by \eqref{cond:periodicity}, we know that the stochastic process has some underlying periodic behaviour such that the occurrence of an exceedance of a high level $u_n$ at time $i$ leads to another exceedance at time $i+p$, with probability approximately $(1-\theta)$.  Therefore,
\begin{list}{$\bullet$}
{ \itemsep 1.0mm \topsep 0.0mm \leftmargin=7mm}
\item $Q_{p,i}^*(u_n)$ corresponds exactly to the realisations of the process with an exceedance of $u_n$, at time $i$, which were ``\emph{captured}'' by the underlying periodic phenomena; and
\item $Q_{p,i}(u_n)$ corresponds to those realisations with an exceedance of $u_n$, at time $i$, but that manage to ``\emph{escape}'' the periodic behaviour.
\end{list}
Hence, if $Q_{p,i}^*(u_n)$ occurs, then we say we have a \emph{capture} at time $i$ while, if $Q_{p,i}(u_n)$ occurs, then we say we have an \emph{escape} at time $i$.
The event $\QQ_{p,s,\ell}(u_n)$ corresponds to the realisations for which no escapes occur between times $s$ and $s+\ell-1$. Recall that in the terminology used in Subsection~\ref{subsec:motivation} where the occurrence of exceedances correspond to entrances in balls, the occurrence of escapes correspond to entrances in annuli. Note that for either a capture or an escape to occur at time $i$, an exceedance must occur at that time.

Note that if condition $\spp(u_n)$ holds we must have:
\[
n\p(Q_{p,0}^*(u_n))=n\p(X_0>u_n, X_p>u_n)=n\p(X_0>u_n)\p(X_p>u_n | X_0>u_n)\xrightarrow[n\to\infty]{} \tau (1-\theta)
\]
and consequently conclude that under $\spp(u_n)$, we have
\begin{equation}
\label{eq:level-un-annulus}
n\p(Q_{p,0}(u_n))\to \theta \tau,\quad\mbox{as $n\to \infty$.}
\end{equation}

As we will show in Theorem~\ref{thm:existence-EI}, under $\spp(u_n)$ the conclusion of Proposition~\ref{prop:ball-annulus} still holds. This means that, in loose terms, the limit distribution of the exceedances is the same as that of the escapes. Hence, in order to prove the existence of limiting law for the maximum in the presence of a periodic phenomenon creating clustering, we follow a similar strategy to that used in \cite{FF08a}, with escapes playing the role of exceedances. We define:

\begin{condition}[$D^p(u_n)$]\label{cond:Dp}We say that $D^p(u_n)$ holds
for the sequence $X_0,X_1,X_2,\ldots$ if for any integers $\ell,t$
and $n$
\[ \left|\p\left(Q_{p,0}(u_n)\cap
  \QQ_{p,t,\ell}(u_n)\right)-\p(Q_{p,0}(u_n))
  \p(\QQ_{p,0,\ell}(u_n))\right|\leq \gamma(n,t),
\]
where $\gamma(n,t)$ is nonincreasing in $t$ for each $n$ and
$n\gamma(n,t_n)\to0$ as $n\rightarrow\infty$ for some sequence
$t_n=o(n)$.
\end{condition}
This condition requires some sort of mixing by demanding that an escape at time $0$ is an event which gets more and more independent from an event corresponding to no escapes during some period, as the time gap between these two events gets larger and larger.  It is in this condition that the main advantage of our approach to prove the EI lies. This is because in all the approaches we are aware of (see for example \cite{L83, O87, HHL88, LN89, CHM91}), some condition like $D(u_n)$ from Leadbetter \cite{L73} is used. Some are slightly weaker like $\AIM(u_n)$ from \cite{O87} or $\Delta(u_n)$ in \cite{LR98}, but they all have a uniform bound on the ``independence'' of two events separated by a time gap, where both these events may depend on an arbitrarily large number of r.v.s of the sequence $X_0, X_1,\ldots$. In contrast, in our condition $D^p(u_n)$, the first event $Q_{p,0}(u_n)$ depends only on the r.v.s $X_0$ and $X_p$ and this proves to be crucial when applying it to stochastic processes arising from dynamical systems as explained in Subsection~\ref{subsubsec:roles-Dp-D'p}.

Assuming $D^p(u_n)$ holds let $(k_n)_{n\in\N}$ be a sequence of integers such that
\begin{equation}
\label{eq:kn-sequence}
k_n\to\infty\quad \mbox{and}\quad  k_n t_n = o(n). 
\end{equation}

\begin{condition}[$D'_p(u_n)$]\label{cond:D'p} We say that $D'_p(u_n)$
holds for the sequence $X_0,X_1,X_2,\ldots$ if there exists a sequence $\{k_n\}_{n\in\N}$ satisfying \eqref{eq:kn-sequence} and such that
\begin{equation}
\label{eq:D'rho-un}
\lim_{n\rightarrow\infty}\,n\sum_{j=1}^{[n/k_n]}\p( Q_{p,0}(u_n)\cap
Q_{p,j}(u_n))=0.
\end{equation}
\end{condition}
This last condition is very similar to Leadbetter's $D'(u_n)$ from \cite{L83}, except that instead of preventing the clustering of exceedances it prevents the clustering of escapes by requiring that they should appear scattered fairly evenly through the time interval from $0$ to $n-1$.

Our main result in this section is the following:
\begin{theorem}
  \label{thm:existence-EI}
  Let $(u_n)_{n\in\N}$ be such that $n\p(X>u_n)=n(1-F(u_n))\to\tau$,
  as $n\to\infty$, for some $\tau\geq 0$.
  Consider a stationary stochastic process $X_0, X_1, X_2,\ldots$ satisfying $\spp(u_n)$ for some $p\in\N$, and $\theta\in (0,1)$.
  Assume further that conditions
  $D^p(u_n)$ and $D'_p(u_n)$ hold. Then
  \begin{equation}
  \label{eq:max-ei}
   \lim_{n\to\infty}\p(M_n\leq u_n)=\lim_{n\to\infty}\p(\QQ_{p,0,n}(u_n))=\e^{-\theta \tau}.
  \end{equation}
\end{theorem}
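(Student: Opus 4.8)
The plan is to mimic the classical Leadbetter-type argument for $\theta=1$ (as in \cite{FF08a}), but with the escape events $Q_{p,0}(u_n)$ playing the role of exceedances and the ``no-escape'' events $\QQ_{p,0,n}(u_n)$ playing the role of $\{M_n\le u_n\}$. First I would invoke Proposition~\ref{prop:ball-annulus}'s analogue, namely establish the first equality in \eqref{eq:max-ei} — that $\lim_n \p(M_n\le u_n)=\lim_n\p(\QQ_{p,0,n}(u_n))$ — directly from $\spp(u_n)$. The argument is identical in spirit to Proposition~\ref{prop:ball-annulus}: one has $\{M_n\le u_n\}\subset\QQ_{p,0,n}(u_n)$ trivially, and on the difference set one enters the ball at some first time $i$ but never escapes, forcing $f^{-i}(\cap_{j=0}^{s_i} f^{-jp}\{X_0>u_n\})$ to occur; summing over $i$, using stationarity and the summability condition \eqref{cond:summability} (which replaces properties \eqref{item:repeller}--\eqref{item:density-point}), the difference in probabilities tends to $0$. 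So it remains to prove $\lim_n\p(\QQ_{p,0,n}(u_n))=\e^{-\theta\tau}$.

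For that, partition $\{0,1,\dots,n-1\}$ into $k_n$ blocks of length roughly $n/k_n$, separated by gaps of length $t_n$ (the mixing time from $D^p(u_n)$); since $k_nt_n=o(n)$, the total gap length is negligible and $\p(\QQ_{p,0,n}(u_n))$ is, up to $o(1)$, the probability that no escape occurs in any of the $k_n$ blocks. Next, using $D^p(u_n)$ repeatedly to peel off one block at a time, I would show $\p(\QQ_{p,0,n}(u_n))$ is approximately $\bigl(\p(\QQ_{p,0,r_n}(u_n))\bigr)^{k_n}$ where $r_n\approx n/k_n$, with a total error of order $k_n\gamma(n,t_n)\to 0$; here I would also need to absorb the error coming from replacing $\QQ_{p,0,r_n+t_n}$ by $\QQ_{p,0,r_n}$, which is controlled by $t_n\,\p(Q_{p,0}(u_n))=t_n\,O(\tau/n)\to 0$ using \eqref{eq:level-un-annulus}. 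Then $\p(\QQ_{p,0,n}(u_n))\approx\exp\bigl(k_n\log(1-\p(\QQ_{p,0,r_n}(u_n))^c)\bigr)\approx\exp\bigl(-k_n\,\p(\exists\text{ escape in }[0,r_n))\bigr)$, so the whole problem reduces to showing
\[
k_n\,\p\Bigl(\bigcup_{i=0}^{r_n-1} Q_{p,i}(u_n)\Bigr)\xrightarrow[n\to\infty]{}\theta\tau.
\]

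The final and most delicate step is this last limit, and it is where $D'_p(u_n)$ enters exactly as $D'(u_n)$ does in Leadbetter's proof. By the Bonferroni/subadditivity bounds,
\[
r_n\,\p(Q_{p,0}(u_n)) - \sum_{j=1}^{r_n-1}(r_n-j)\p(Q_{p,0}(u_n)\cap Q_{p,j}(u_n))\le \p\Bigl(\bigcup_{i=0}^{r_n-1}Q_{p,i}(u_n)\Bigr)\le r_n\,\p(Q_{p,0}(u_n)),
\]
so after multiplying by $k_n$ the upper bound gives $k_n r_n\,\p(Q_{p,0}(u_n))\to\theta\tau$ by \eqref{eq:level-un-annulus}, and the lower bound differs by at most $k_n\sum_{j=1}^{r_n}\p(Q_{p,0}(u_n)\cap Q_{p,j}(u_n))\le n\sum_{j=1}^{[n/k_n]}\p(Q_{p,0}(u_n)\cap Q_{p,j}(u_n))$, which is exactly the quantity driven to $0$ by $D'_p(u_n)$. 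Combining, $\p(\QQ_{p,0,n}(u_n))\to\e^{-\theta\tau}$, and with the first step this gives $\p(M_n\le u_n)\to\e^{-\theta\tau}$.

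I expect the main obstacle to be the bookkeeping in the block-decomposition step: one must choose the block length $r_n$ and separation $t_n$ carefully so that \emph{simultaneously} $k_n\gamma(n,t_n)\to 0$ (from $D^p(u_n)$), $t_n\,\p(Q_{p,0}(u_n))k_n\to 0$ (negligible truncation), and $k_n\to\infty$ with $k_nt_n=o(n)$ — and then verify that the accumulated errors from iterating $D^p(u_n)$ $k_n$ times, together with the $\log(1-x)\sim -x$ linearisation (valid since $\p(\QQ_{p,0,r_n}(u_n))^c=O(r_n\tau/n)=O(\tau/k_n)\to 0$), all vanish in the limit. None of these steps is conceptually hard, but getting the quantifiers in the right order is the crux; the genuinely new ingredient, that escapes rather than exceedances are the right objects, has already been isolated in $\spp(u_n)$ and Proposition~\ref{prop:ball-annulus}.
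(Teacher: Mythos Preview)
Your overall strategy is correct and coincides with the paper's proof: reduce $\p(M_n\le u_n)$ to $\p(\QQ_{p,0,n}(u_n))$ via $\spp(u_n)$ exactly as you describe, then run a Leadbetter-type block argument on escapes to obtain $\e^{-\theta\tau}$. There is, however, one point you have glossed over and slightly misstated, and it is precisely the crux of why $D^p(u_n)$ is weaker than Leadbetter's $D(u_n)$.

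Condition $D^p(u_n)$ decorrelates a \emph{single} escape $Q_{p,0}(u_n)$ from a later no-escape block $\QQ_{p,t,\ell}(u_n)$; it does \emph{not} directly decouple two no-escape blocks $\QQ_{p,0,r_n}$ and $\QQ_{p,r_n+t_n,m}$. So you cannot ``peel off one block at a time using $D^p(u_n)$'' with error merely $k_n\gamma(n,t_n)$, as you wrote. The paper bridges this via Lemma~\ref{lem:relation-maximums}~\eqref{lem:relation-maximums-eq2}, which rewrites the increment $\p(\QQ_{p,0,s+t+m})-\p(\QQ_{p,0,m})$ as a sum over $j<s$ of terms $\p(Q_{p,0}(u_n)\cap\QQ_{p,s+t-j,m}(u_n))$ --- now the first event is a single escape, so $D^p(u_n)$ applies term by term, giving error $\ell\gamma(n,t)$ per recursion step and hence $n\gamma(n,t_n)$ total --- \emph{plus} a second-order Bonferroni correction $2\ell\sum_{j=1}^{\ell-1}\p(Q_{p,0}\cap Q_{p,j})$, which is exactly the $D'_p(u_n)$ quantity. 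Thus $D'_p(u_n)$ already enters in the peeling step (this is the ``double use'' of $D'$ alluded to at the end of Section~\ref{subsec:no-clustering}), and the recursion outputs $(1-\ell\,\p(Q_{p,0}(u_n)))^{k_n}$ directly, making your separate final Bonferroni step redundant. None of this changes the conclusion, and you would discover it immediately upon trying to write the peeling step in detail; but since the whole point of $D^p(u_n)$ versus $D(u_n)$ is that the first event is restricted to depend on finitely many $X_i$'s, this reduction (block $\rightsquigarrow$ sum of single escapes, at the cost of a $D'_p$-controlled error) is the genuine content of the argument, not just bookkeeping.
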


Theorem~\ref{thm:existence-EI} and in particular formula \eqref{eq:max-ei} allow us to paint the following picture:  In \eqref{eq:RTS-EI} we are concerned with the distribution of $M_n$ given that an exceedance of level $u_n$ has occurred at time 0.  The underlying periodic phenomena and in particular the capture incidents are responsible for the appearance of the Dirac term in \eqref{eq:RTS-EI} for the distribution of RTS with a weight given by the probability of a capture occurring, given that an exceedance has occurred, which is $1-\theta$. On the other hand, the escapes are responsible for the appearance of the exponential term in \eqref{eq:RTS-EI}, again with a weight given by the probability of an escape occurring, given that an exceedance has occurred, which is $\theta$. However, the distribution of $M_n$, where we assume nothing about exceedances at time 0, is equal to the one of the HTS, which, as can be seen in \eqref{eq:HTS-EI}, only sees the exponential term or in other words the escape component. Formula \eqref{eq:max-ei} is then saying that computing the distribution of $M_n$ can be reduced to computing the distribution of the escapes.
\begin{remark}
If we enrich the process and the statistics by
considering either multiple returns or multiple exceedances we can study Exceedance Point Processes or Hitting Times Point Processes as in  \cite[Section~3]{FFT10}. One would expect these point processes to converge to a compound Poisson process, consisting, in loose terms, of a limiting Poisson process ruling the cluster positions, to which is associated a multiplicity corresponding to the cluster size. One can then adapt the proof of Theorem~\ref{thm:existence-EI} to obtain a result similar to \cite[Theorem~5]{FFT10}, thus obtaining the convergence of cluster positions to a Poisson Process.  In order to achieve this, we would have to change $D^p(u_n)$ in the same way that $D_2(u_n)$ was changed to $D_3(u_n)$ in \cite{FFT10}, with exceedances in $D_3(u_n)$ from \cite{FFT10} replaced by escapes. However, to obtain the actual convergence of the Exceedance Point Processes or Hitting Times Point Processes to the compound Poisson process more work is needed since we cannot apply Kallenberg's criterion used in \cite[Theorem~5]{FFT10} because here the Poisson events are not simple, \ie they can have multiplicity.  This is studied in a work in progress.
\end{remark}

We start the proof of Theorem~\ref{thm:existence-EI} with the following two simple observations.
\begin{lemma}
  \label{lem:prob-of-union}
  For any integers $p, \ell\in\N$, $s\in \N\cup\{0\}$ and real numbers $0<u<v$ we have
  \begin{equation*}
  \sum_{j=s}^{s+\ell-1} \p(Q_{p,j}(u))\geq \p(\QQ_{p,s,\ell}^c(u))
  \geq \sum_{j=s}^{s+\ell-1} \p(Q_{p,j}(u))-
  \sum_{j=s}^{s+\ell-1}\sum_{
  i>j}^{s+\ell-1} \p(Q_{p,j}(u)\cap
  Q_{p,i}(u))
  \end{equation*}
\end{lemma}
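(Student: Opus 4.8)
The statement is a standard Bonferroni-type (inclusion–exclusion) bound applied to the finite union $\QQ_{p,s,\ell}^c(u) = \bigcup_{j=s}^{s+\ell-1} Q_{p,j}(u)$, so the plan is simply to unwind the definitions and invoke the first two Bonferroni inequalities. First I would observe that, by definition of $\QQ_{p,s,\ell}(u) = \bigcap_{i=s}^{s+\ell-1} Q_{p,i}^c(u)$, taking complements gives $\QQ_{p,s,\ell}^c(u) = \bigcup_{j=s}^{s+\ell-1} Q_{p,j}(u)$. This reduces everything to bounding the probability of a union of $\ell$ events.

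The upper bound $\p\bigl(\bigcup_{j=s}^{s+\ell-1} Q_{p,j}(u)\bigr) \le \sum_{j=s}^{s+\ell-1} \p(Q_{p,j}(u))$ is just subadditivity of the measure $\p$. For the lower bound, I would apply the second Bonferroni inequality: for any events $A_s,\ldots,A_{s+\ell-1}$,
\[
\p\Bigl(\bigcup_{j} A_j\Bigr) \ge \sum_{j} \p(A_j) - \sum_{j<i} \p(A_j\cap A_i),
\]
which follows, for instance, by induction on $\ell$ using $\p(A\cup B) = \p(A) + \p(B) - \p(A\cap B)$ together with $\p\bigl((\bigcup_{j<i}A_j)\cap A_i\bigr)\le \sum_{j<i}\p(A_j\cap A_i)$ at each step. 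Taking $A_j = Q_{p,j}(u)$ and rewriting the double sum over pairs $j<i$ in the index range $s\le j<i\le s+\ell-1$ as $\sum_{j=s}^{s+\ell-1}\sum_{i>j}^{s+\ell-1}$ yields exactly the claimed inequality.

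There is essentially no obstacle here: the only thing to be mild about is bookkeeping with the index ranges (the statement fixes the block to start at $s$ and have length $\ell$, and the inequality holds verbatim for each such block), and noting that the hypothesis $0<u<v$ and the integer $p$ play no role beyond making the events $Q_{p,j}(u)$ well-defined — the bound is purely set-theoretic/measure-theoretic and uniform in all parameters. I would therefore present the proof in two lines, citing subadditivity for the right-hand inequality and the degree-two Bonferroni inequality for the left-hand one, after recording the complementation identity $\QQ_{p,s,\ell}^c(u) = \bigcup_{j=s}^{s+\ell-1} Q_{p,j}(u)$.
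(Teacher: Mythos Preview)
Your proposal is correct and matches the paper's own approach: the paper simply notes that the inequality is a straightforward consequence of the inclusion--exclusion formula for the probability of a union of events, citing Feller. Your explicit identification of the two bounds as subadditivity and the second Bonferroni inequality is exactly the content of that remark.
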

\begin{proof}
This is a straightforward consequence of the formula for the
probability of a multiple union on events. See for example first
theorem of Chapter~4 in \cite{F50}.
\end{proof}
\begin{lemma}
  \label{lem:relation-maximums}
  Assume that $t,r,m,\ell,s$ are nonnegative integers and $u>0$ is a positive real number. Then, we have
  \begin{equation}\label{lem:relation-maximums-eq1}
  0\leq \p(\QQ_{p,s,\ell}(u))-\p(\QQ_{p,s,\ell+t}(u))\leq t\cdot \p(Q_{p,0}(u))
  \end{equation}
  and
  \begin{gather}
  \begin{split} \left|\p(\QQ_{p,0,s+t+m}(u))-\p(\QQ_{p,0,m}(u)) +\sum_{j=0}^{s-1} \p\left(Q_{p,0}(u)\cap
  \QQ_{p,s+t-j,m}(u)\right)\right|&\\\leq 2s\sum_{j=1}^{s-1}
  \p(Q_{p,0}(u)\cap Q_{p,j}(u))+ & t\p(Q_{p,0}(u)).
  \end{split}\label{lem:relation-maximums-eq2}
  \end{gather}
\end{lemma}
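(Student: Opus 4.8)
The plan is to prove both inequalities by elementary set manipulations, using only stationarity, the monotonicity $\QQ_{p,s,\ell+t}(u)\subseteq\QQ_{p,s,\ell}(u)$, and Bonferroni-type estimates exactly of the kind established in Lemma~\ref{lem:prob-of-union}. Throughout I fix $u>0$ and abbreviate $Q_j:=Q_{p,j}(u)$; recall $t\ge 0$.

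For \eqref{lem:relation-maximums-eq1}: the event $\QQ_{p,s,\ell+t}(u)$ is $\QQ_{p,s,\ell}(u)$ intersected with the $t$ further events $Q_i^c$, $i=s+\ell,\dots,s+\ell+t-1$, so the inclusion gives the left-hand inequality, while $\QQ_{p,s,\ell}(u)\setminus\QQ_{p,s,\ell+t}(u)\subseteq\bigcup_{i=s+\ell}^{s+\ell+t-1}Q_i$; subadditivity together with stationarity ($\p(Q_i)=\p(Q_{p,0}(u))$) then yields the bound $t\,\p(Q_{p,0}(u))$.

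For \eqref{lem:relation-maximums-eq2} I would proceed in three steps, writing $B:=\QQ_{p,s+t,m}(u)$, so that $\QQ_{p,0,s+t+m}(u)=\QQ_{p,0,s}(u)\cap\QQ_{p,s,t}(u)\cap B$. \textbf{(i) Peel off the gap.} Since $\QQ_{p,s,t}^c(u)=\bigcup_{i=s}^{s+t-1}Q_i$, the set $(\QQ_{p,0,s}(u)\cap B)\setminus\QQ_{p,0,s+t+m}(u)$ equals $\QQ_{p,0,s}(u)\cap B\cap\QQ_{p,s,t}^c(u)$, whence $0\le\p(\QQ_{p,0,s}(u)\cap B)-\p(\QQ_{p,0,s+t+m}(u))\le t\,\p(Q_{p,0}(u))$ as in part 1. \textbf{(ii) Expand the first block.} Since $\QQ_{p,0,s}(u)^c=\bigcup_{j=0}^{s-1}Q_j$, we have $\p(B)-\p(\QQ_{p,0,s}(u)\cap B)=\p(B\cap\bigcup_{j=0}^{s-1}Q_j)$; multiplying the indicator inequalities underlying Lemma~\ref{lem:prob-of-union} by $\mathbf{1}_B\ge 0$ before taking expectations shows this differs from $\sum_{j=0}^{s-1}\p(B\cap Q_j)$ by at most $\sum_{0\le i<j\le s-1}\p(Q_i\cap Q_j)$, which, by stationarity (writing $\p(Q_i\cap Q_j)=\p(Q_{p,0}(u)\cap Q_{p,j-i}(u))$) and counting the at most $s$ pairs of each given difference, is at most $s\sum_{j=1}^{s-1}\p(Q_{p,0}(u)\cap Q_{p,j}(u))$. \textbf{(iii) Rewrite the surviving terms.} By stationarity a time shift by $s+t$ gives $\p(B)=\p(\QQ_{p,0,m}(u))$, and a shift by $j$ gives $\p(B\cap Q_j)=\p(Q_{p,0}(u)\cap\QQ_{p,s+t-j,m}(u))$ (the index $s+t-j\ge 1$ is legitimate for $0\le j\le s-1$). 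Adding the three relations, $\p(B)$ and $\p(\QQ_{p,0,m}(u))$ cancel, the terms $\sum_{j=0}^{s-1}\p(B\cap Q_j)$ turn into the sum appearing in the statement, and one is left with the difference of two nonnegative error terms bounded respectively by $t\,\p(Q_{p,0}(u))$ and $s\sum_{j=1}^{s-1}\p(Q_{p,0}(u)\cap Q_{p,j}(u))$; the triangle inequality then gives \eqref{lem:relation-maximums-eq2} (in fact with the slightly better constant $s$ in place of $2s$). The degenerate cases $s=0$ (empty block sum, so the statement collapses to part 1) and $t=0$ (no gap term) are immediate.

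The computations are all routine; the one place needing care is step (iii), where one must keep the various time shifts straight and, in particular, observe that after peeling off the gap and applying the Bonferroni bound the single remaining ``large'' term is $\p(B)$, which by stationarity is \emph{exactly} $\p(\QQ_{p,0,m}(u))$ — this is what makes the two probabilities in the statement cancel up to the advertised errors. A secondary point, which I would check explicitly, is that all indices occurring (such as $s+t-j$) remain nonnegative so that the events in question are well defined.
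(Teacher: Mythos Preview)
Your proposal is correct and follows exactly the standard Bonferroni/block-decomposition argument the paper has in mind: the paper does not write out a proof but simply refers to \cite[Lemma~3.2]{FF08a} and \cite[Proposition~3.2]{C01}, and your three-step scheme (peel off the gap, expand the first block via inclusion--exclusion, then use stationarity to rewrite) is precisely that argument transported from exceedances $\{X_i>u\}$ to escapes $Q_{p,i}(u)$. Your observation that the constant $s$ suffices in place of $2s$ is also correct; the looser $2s$ in the statement is an artefact of the cited proofs and is harmless for the application in Theorem~\ref{thm:existence-EI}.
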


The proof of this lemma can easily be done by following the proof of \cite[Lemma~3.2]{FF08a} or \cite[Proposition~3.2]{C01} with minor adjustments.

\begin{proof}[Proof of Theorem~\ref{thm:existence-EI}]
We split the proof in two parts. The first is devoted to showing the second equality in \eqref{eq:max-ei}, leaving the first equality for the second part of the proof.

Let $\ell=\ell_n=[n/k_n]$ and $k=k_n$ be as in Condition $D_p'(u_n)$. We begin by replacing $\p(\QQ_{0,n}(u_n))$ by
$\p(\QQ_{0,k(\ell+t)}(u_n))$ for some $t>1$. By
\eqref{lem:relation-maximums-eq1} of
Lemma~\ref{lem:relation-maximums} and the fact that $Q_{p,0}(u_n)\subset\{X_0>u_n\}$, we have
\begin{equation}
\label{eq:replace} \left|\p(\QQ_{0,n}(u_n))-\p(\QQ_{0,k(\ell+t)}(u_n))\right|\leq kt\p(X_0>u_n).
\end{equation}
We now estimate recursively $\p(\QQ_{p,0,i(\ell+t)}(u_n))$ for
$i=0,\ldots, k$. Using ~\eqref{lem:relation-maximums-eq2} of
Lemma~\ref{lem:relation-maximums} and stationarity, we have for any
$1\leq i\leq k$
\begin{equation*}
  \left|\p(\QQ_{p,0,i(\ell+t)}(u_n))-\big(1-\ell \p(Q_{p,0}(u_n))\big)\p(\QQ_{p,0,(i-1)(\ell+t)}(u_n))\right|
  \leq \Gamma_{n,i},
\end{equation*}
where
\begin{align*}
  \Gamma_{n,i}&=
  \left|\ell \p(Q_{p,0}(u_n))\p(\QQ_{p,0,(i-1)(\ell+t)}(u_n))-
   \sum_{j=0}^{\ell-1} \p\left(Q_{p,j}(u_n)
  \cap\QQ_{p,\ell+t,(i-1)(\ell+t)}(u_n)\right)\right|\\
  &\qquad+t\p(X_0>u_n)+2\ell\sum_{j=1}^{\ell-1}
  \p\left(Q_{p,0}(u_n)
  \cap Q_{p,j}(u_n)\right).
\end{align*}
Using stationarity, $D^p(u_n)$ and, in particular, that $\gamma(n,t)$
is nonincreasing in $t$ for each $n$ we conclude
\begin{align*}
  \Gamma_{n,i}&\leq\sum_{j=0}^{\ell-1}
  \big|\p(Q_{p,0}(u_n))\p(\QQ_{p,0,(i-1)(\ell+t)}(u_n))-\p\left(Q_{p,0}(u_n)
  \cap \QQ_{p,\ell+t-j,(i-1)(\ell+t)}(u_n)\right)
   \big|\\
  &\quad+t\p(X_0>u_n)+2\ell\sum_{j=1}^{\ell-1}
  \p\left(Q_{p,0}(u_n)
  \cap Q_{p,j}(u_n)\right)\\
  &\leq \ell\gamma(n,t)+t\p(X_0>u_n)+2\ell\sum_{j=1}^{\ell-1}
  \p\left(Q_{p,0}(u_n)
  \cap Q_{p,j}(u_n)\right).
\end{align*}
Define
$\Upsilon_n= \ell\gamma(n,t)+t\p(X_0>u_n)+2\ell\sum_{j=1}^{\ell-1}
  \p\left(Q_{p,0}(u_n)
  \cap Q_{p,j}(u_n)\right)$. Then for every $1<i\leq k$ we have
\[
\left|\p(\QQ_{p,0,i(\ell+t)}(u_n))-\big(1-\ell \p(Q_{p,0}(u_n))\big)\p(\QQ_{p,0,(i-1)(\ell+t)}(u_n))\right|
  \leq \Upsilon_n
\]
and for $i=1$
\[
\left|\p(\QQ_{p,0,\ell+t}(u_n))-\big(1-\ell \p(Q_{p,0}(u_n))\big)\right|
  \leq \Upsilon_n.
\]

Since $n\p(X>u_n)\to\tau$, as $n\to\infty$, by \eqref{eq:level-un-annulus}, it follows that $n\p(Q_{p,0}(u_n))\to\theta\tau.$
Hence, if $k$ and $n$ are large enough we
  have $\ell \p(Q_{p,0}(u_n))<2$, which implies that
  $\big|1-\ell \p(Q_{p,0}(u_n))\big|<1$. Then, a simple inductive argument
  allows us to conclude
\begin{equation*}
  \left|\p(\QQ_{p,0,k(\ell+t)}(u_n))-\big(1-\ell \p(Q_{p,0}(u_n))\big)^k\right|\leq k\Upsilon_n.
\end{equation*}
Recalling \eqref{eq:replace}, we have
\begin{equation*}
\left|\p(\QQ_{p,0,n}(u_n))-\big(1-\ell
\p(Q_{p,0}(u_n))\big)^k\right|\leq kt\p(Q_{p,0}(u_n))+k\Upsilon_n.
\end{equation*}
Since  by \eqref{eq:level-un-annulus}, it follows that $n\p(Q_{p,0}(u_n))\to\theta\tau$, as $n\to\infty$, for some
$\tau\geq0$, we have
\[
\lim_{n\to\infty}\big(1-[\tfrac{n}{k}]\p(Q_{p,0}(u_n))\big)^k
=\e^{-\theta\tau}.
\]
It is now clear that, the second equality in \eqref{eq:max-ei} holds if \[
\lim_{n\to\infty}kt\p(Q_{p,0}(u_n))+k\Upsilon_n=0,
\]
that is
\begin{equation}
\label{eq:error}
\lim_{n\to\infty}2kt\p(Q_{p,0}(u_n))+n\gamma(n,t)+
2n\sum_{j=1}^ {\ell}
  \p\left(Q_{p,0}(u_n)
  \cap Q_{p,j}(u_n)\right)=0.
\end{equation}
Assume that $t=t_n$ where $t_n=o(n)$ is given by Condition $D^p(u_n)$.
Then, by \eqref{eq:kn-sequence}, we have \( \lim_{n\to\infty}kt_n\p(Q_{p,0}(u_n))=0
\), since $n\p(Q_{p,0}(u_n))\to\theta\tau\geq 0$. Finally, we use $D^p(u_n)$ and
$D'_p(u_n)$ to obtain that the two remaining terms in \eqref{eq:error}
also go to $0$.

Now, we need to show that the first equality in \eqref{eq:max-ei} holds. First observe that $$\{M_n\leq u_n\}\subset\QQ_{p,0,n}(u_n).$$ Next, note that if $\QQ_{p,0,n}(u_n)\setminus \{M_n\leq u_n\}$ occurs, then we may define $i=\inf\{j\in\{0,1,\ldots n-1\}:\; X_j>u_n\}$ and $s_i=[ \frac {n-1-i}{p}]$. But since $\QQ_{p,0,n}(u_n)$ does occur, then for all $j=1,\ldots,s_i$ we must have $X_{i+jp}>u_n$, otherwise, there would exist $j_i=\min\{j\in\{1,\ldots,s_i\}: X_j\leq u_n\}$ and $Q_{p,i+(j_i-1)p}(u_n)$ would occur, which contradicts the occurrence of $\QQ_{p,0,n}(u_n)$. This means that
\[
\QQ_{p,0,n}(u_n)\setminus \{M_n\leq u_n\}\subset \bigcup_{i=0}^{n-1} \{X_i>u_n,X_{i+p}>u_n,\ldots, X_{i+s_ip}>u_n\}.
\]
It follows by $\spp(u_n)$ and stationarity that
\begin{multline*}
  \p(\QQ_{p,0,n}(u_n))-\p(\{M_n\leq u_n\})\leq \sum_{i=0}^{n-1}\p\left( X_i>u_n,X_{i+p}>u_n,\ldots, X_{i+s_ip}>u_n \right)\\
    \leq p\sum_{i=0}^{[n/p]}\p\left(X_0>u_n, X_{p}>u_n,X_{2p}>u_n,\ldots, X_{ip}>u_n\right)\xrightarrow[n\to\infty]{}0.
  \end{multline*}
\end{proof}

\subsection{Existence of an EI due to multiple underlying periodic phenomena}
\label{ssec:higher ord}
In this subsection we consider  stochastic processes with more than one underlying periodic phenomenon creating clustering of events (these can not be realised as stochastic processes coming from dynamical systems as described above).

In fact, it may happen that the escapes themselves form clusters which means that $D_p'(u_n)$ does not hold. This occurs if, for example, for some $1\leq j\leq[n/k_n]$ we have $n\p(Q_{p,0}(u_n)\cap Q_{p,j}(u_n))\to\alpha>0$. Let $p_2$ be the smallest such $j$.  Then, since $\p(Q_{p,0}(u_n))/n\sim \theta\tau$, we  have that \eqref{cond:periodicity} holds if we replace exceedances by escapes and $p$ by $p_2$. Therefore there is a second underlying periodic phenomenon which leads to the notion of escapes of second order.  This motivates the introduction of similar conditions to $SP_{p,\theta}$, $D^p(u_n)$, $D_p'(u_n)$, where the role of the exceedances is replaced by escapes, in order to obtain a statement like Theorem~\ref{thm:existence-EI}, where the distribution of the maximum would be equal to the distribution of these escapes of second order. Since it may also happen that these escapes of second order also form clusters, we may have to repeat the process all over again. Hence, we establish a hierarchy of escapes in the following way.

Given the sequences $(p_i)_{i\in\N}$ and $(\theta_i)_{i\in\N}$, with $p_i\in\N$ and $\theta_i\in(0,1)$ for all $i\in\N$, let $\mathbf{p}_i=(p_1,p_2,\ldots,p_i)$, $\Theta_i=(\theta_1,\theta_2,\ldots,\theta_i)$. For each $j\in\N$ and $u\in\R$,  assuming that $Q^{(i-1)}_{\mathbf p_{i-1},j}(u)$ is already defined we define the \emph{escape of order $i$} as
\begin{equation*}
Q^{(i)}_{\mathbf p_{i},j}(u)=Q^{(i-1)}_{\mathbf p_{i-1},j}(u)\cap \left(Q^{(i-1)}_{\mathbf p_{i-1},j+p_i}(u)\right)^c.
\end{equation*}
We set $Q^{(1)}_{\mathbf p_{1},j}(u)=Q_{p_1,j}(u)$ and in the case $i=0$ we can consider that $Q^{(0)}_{0,j}(u)=\{X_j>u\}$. For $i,s,\ell\in\N\cup\{0\}$, let $\QQ^{(i)}_{\mathbf p_i,s,\ell}(u)=\bigcap_{j=s}^{s+\ell-1} \left(Q^{(i)}_{\mathbf p_i,j}(u)\right)^c.$
Now we restate conditions $SP_{p,\theta}$, $D^p(u_n)$, $D_p'(u_n)$ with respect to the escapes of order $i\in\N$.
\begin{condition}[$\sppi(u_n)$]\label{cond:SPi} We say that  $X_0,X_1,X_2,\ldots$ satisfies condition $\sppi(u_n)$, for $\mathbf p_i\in \N^i$ and $\Theta_i\in (0,1)^i$ defined as above, if
\begin{equation}
\label{cond:i-periodicity}
\begin{split}\lim_{n\to \infty}\sup_{1\le j<p_i}\p\left(Q^{(i-1)}_{\mathbf p_{i-1},j}(u_n)\big| Q^{(i-1)}_{\mathbf p_{i-1},0}(u_n)\right)=0 \;\mbox{ and}\hspace{5cm}\\ \lim_{n\to \infty} \p\left(Q^{(i-1)}_{\mathbf p_{i-1},p_i}(u_n)\big| Q^{(i-1)}_{\mathbf p_{i-1},0}(u_n)\right)\to (1-\theta_i)
\end{split}
\end{equation}
and moreover
\begin{equation}
\label{cond:i-summability}
\lim_{n\to \infty}\sum_{j=0}^{\left[\frac{n-1}{p_i}\right]} \p\left(Q^{(i-1)}_{\mathbf p_{i-1},0}(u_n), Q^{(i-1)}_{\mathbf p_{i-1},p_i}(u_n), Q^{(i-1)}_{\mathbf p_{i-1},2p_i}(u_n),\ldots,Q^{(i-1)}_{\mathbf p_{i-1},j p_i}(u_n)\right)=0.
\end{equation}
\end{condition}
\begin{condition}[$D^{\mathbf p_i}(u_n)$]\label{cond:Dpi}We say that $D^{\mathbf p_i}(u_n)$ holds
for the sequence $X_0,X_1,X_2,\ldots$ if for any integers $\ell,t$
and $n$
\[ \left|\p\left(Q^{(i)}_{\mathbf p_{i},0}(u_n)\cap
  \QQ^{(i)}_{\mathbf p_i,t,\ell}(u_n)\right)-\p\left(Q^{(i)}_{\mathbf p_{i},j}(u_n)\right)
  \p\left(\QQ^{(i)}_{\mathbf p_i,0,\ell}(u_n)\right)\right|\leq \gamma(n,t),
\]
where $\gamma(n,t)$ is nonincreasing in $t$ for each $n$ and
$n\gamma(n,t_n)\to0$ as $n\rightarrow\infty$ for some sequence
$t_n=o(n)$.
\end{condition}
Note that, as before, the first event $Q^{(i)}_{\mathbf p_{i},0}(u_n)$ depends only on a finite number of r.v., namely, $X_0, X_{p_1}, X_{p_1+p_2}, \ldots, X_{p_1+p_2+\cdots+p_i}$.

\begin{condition}[$D'_{\mathbf p_{i}}(u_n)$]\label{cond:D'pi} We say that $D'_{\mathbf p_{i}}(u_n)$
holds for the sequence $X_0,X_1,X_2,\ldots$ if there exists a sequence $\{k_n\}_{n\in\N}$ satisfying \eqref{eq:kn-sequence} and such that
\begin{equation*}
\lim_{n\rightarrow\infty}\,n\sum_{j=1}^{[n/k_n]}\p\left( Q^{(i)}_{\mathbf p_{i},0}(u_n)\cap
Q^{(i)}_{\mathbf p_{i},j}(u_n)\right)=0.
\end{equation*}
\end{condition}

Observe that condition $D'_{\mathbf p_{i}}(u_n)$ gets weaker and weaker as $i$ increases, which means that every time a new underlying periodic phenomenon is found, there is a higher chance that escapes of the next order satisfy $D'$.

The next result generalises Theorem~\ref{thm:existence-EI}, which corresponds exactly to the case $i=1$, to the case of higher order escapes. We stated these theorems separately since Theorem~\ref{thm:existence-EI} contains the essential ideas required for Theorem~\ref{thm:existence-EI-i} and, moreover, shows the influence of the periodic behaviour in a more transparent way.

\begin{theorem}
  \label{thm:existence-EI-i}
  Let $(u_n)_{n\in\N}$ be such that $n\p(X>u_n)=n(1-F(u_n))\to\tau$,
  as $n\to\infty$, for some $\tau\geq 0$.
  Consider a stationary stochastic process $X_0, X_1, X_2,\ldots$ satisfying conditions $\sppj(u_n)$ for all $1\leq j\leq i$.
  Assume further that conditions
  $D^{\mathbf p_i}(u_n)$ and $D'_{\mathbf p_i}(u_n)$ hold. Then
  \begin{equation}
  \label{eq:max-ei-i}
   \lim_{n\to\infty}\p(M_n\leq u_n)=\lim_{n\to\infty}\p\left(\QQ^{(1)}_{p,0,n}(u_n)\right) =\cdots=\lim_{n\to\infty}\p\left(\QQ^{(i)}_{p,0,n}(u_n)\right) =\e^{-\theta \tau},
  \end{equation}
  where $\theta=\theta_1\theta_2\cdots \theta_i$.
\end{theorem}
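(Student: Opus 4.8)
The plan is to prove Theorem~\ref{thm:existence-EI-i} by induction on the order, peeling off one underlying periodic phenomenon at a time and, at the last level, running the argument of Theorem~\ref{thm:existence-EI} essentially verbatim. Adopting the convention $Q^{(0)}_{0,j}(u)=\{X_j>u\}$, one has $\QQ^{(0)}_{0,0,n}(u_n)=\{M_n\leq u_n\}$, so \eqref{eq:max-ei-i} amounts to two statements: (a) for every $1\leq j\leq i$,
\[
\lim_{n\to\infty}\left(\p\big(\QQ^{(j)}_{\mathbf p_j,0,n}(u_n)\big)-\p\big(\QQ^{(j-1)}_{\mathbf p_{j-1},0,n}(u_n)\big)\right)=0,
\]
and (b) $\lim_{n\to\infty}\p\big(\QQ^{(i)}_{\mathbf p_i,0,n}(u_n)\big)=\e^{-\theta\tau}$ with $\theta=\theta_1\cdots\theta_i$. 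Note that the intermediate limits are not assigned their own exponential expression; (a) merely forces them all to be equal, and then (b) identifies the common value as $\e^{-\theta\tau}$.

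For (a), fix $j$ and repeat the proof of the ``first equality'' in \eqref{eq:max-ei}, with exceedances $\{X_\cdot>u_n\}$ replaced by escapes of order $j-1$ and escapes $Q_{p,\cdot}$ replaced by escapes of order $j$; the identity $Q^{(j)}_{\mathbf p_j,m}(u)=Q^{(j-1)}_{\mathbf p_{j-1},m}(u)\setminus Q^{(j-1)}_{\mathbf p_{j-1},m+p_j}(u)$ is the exact analogue of $Q_{p,m}(u)=\{X_m>u\}\setminus\{X_{m+p}>u\}$. From $Q^{(j)}_{\mathbf p_j,m}(u)\subset Q^{(j-1)}_{\mathbf p_{j-1},m}(u)$ one gets the inclusion $\QQ^{(j-1)}_{\mathbf p_{j-1},0,n}(u_n)\subset\QQ^{(j)}_{\mathbf p_j,0,n}(u_n)$; and on the difference set there is a time $m\in\{0,\ldots,n-1\}$ at which $Q^{(j-1)}_{\mathbf p_{j-1},m}(u_n)$ occurs, and then the absence of any escape of order $j$ forces $Q^{(j-1)}_{\mathbf p_{j-1},m+kp_j}(u_n)$ to occur for all $k=0,\ldots,[(n-1-m)/p_j]$, so by stationarity the difference is bounded above by $p_j\sum_{k=0}^{[n/p_j]}\p\big(Q^{(j-1)}_{\mathbf p_{j-1},0}(u_n)\cap\cdots\cap Q^{(j-1)}_{\mathbf p_{j-1},kp_j}(u_n)\big)$, which tends to $0$ by the summability clause \eqref{cond:i-summability} of $\sppj(u_n)$.

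For (b), iterating the periodicity clause \eqref{cond:i-periodicity} downward in the order gives
\[
\p\big(Q^{(j)}_{\mathbf p_j,0}(u_n)\big)=\p\big(Q^{(j-1)}_{\mathbf p_{j-1},0}(u_n)\big)-\p\big(Q^{(j-1)}_{\mathbf p_{j-1},0}(u_n)\cap Q^{(j-1)}_{\mathbf p_{j-1},p_j}(u_n)\big)\sim\theta_j\,\p\big(Q^{(j-1)}_{\mathbf p_{j-1},0}(u_n)\big),
\]
whence, starting from $n\p(X_0>u_n)\to\tau$, one obtains $n\p\big(Q^{(i)}_{\mathbf p_i,0}(u_n)\big)\to\theta\tau$ --- the analogue of \eqref{eq:level-un-annulus}. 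The analogues of Lemmas~\ref{lem:prob-of-union} and~\ref{lem:relation-maximums} with $Q_{p,m},\QQ_{p,s,\ell}$ replaced by $Q^{(i)}_{\mathbf p_i,m},\QQ^{(i)}_{\mathbf p_i,s,\ell}$ hold by the same proofs (they use only elementary union bounds and stationarity), and together with the containment $Q^{(i)}_{\mathbf p_i,0}(u_n)\subset\{X_0>u_n\}$ they let the recursive estimate of $\p\big(\QQ^{(i)}_{\mathbf p_i,0,k(\ell+t)}(u_n)\big)$, the passage to $\big(1-\ell\,\p(Q^{(i)}_{\mathbf p_i,0}(u_n))\big)^k\to\e^{-\theta\tau}$, and the vanishing of the error $2kt\,\p(Q^{(i)}_{\mathbf p_i,0}(u_n))+n\gamma(n,t)+2n\sum_{j=1}^{\ell}\p\big(Q^{(i)}_{\mathbf p_i,0}(u_n)\cap Q^{(i)}_{\mathbf p_i,j}(u_n)\big)$ go through word for word, now using $D^{\mathbf p_i}(u_n)$ and $D'_{\mathbf p_i}(u_n)$ in place of $D^p(u_n)$ and $D'_p(u_n)$.

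Beyond Theorem~\ref{thm:existence-EI} the proof is therefore mostly bookkeeping, and the main (mild) obstacle is notational: one must keep straight the nested events $Q^{(j)}_{\mathbf p_j,m}$ and remember that at level $j$ the escapes of order $j-1$ play exactly the role that exceedances played in Theorem~\ref{thm:existence-EI} while the escapes of order $j$ play the role of escapes there. The substantive points are that the ``first equality''-type argument has to be run at every intermediate level $j$ --- which is precisely where the hypothesis $\sppj(u_n)$ for \emph{all} $1\leq j\leq i$ is needed --- and that the factor $\theta_j$ is extracted at each level from the downward iteration of \eqref{cond:i-periodicity}, so that the exponent accumulated after $i$ steps is $\theta_1\cdots\theta_i$; the combinatorial lemmas and the bound $Q^{(i)}_{\mathbf p_i,0}(u_n)\subset\{X_0>u_n\}$ transfer without any change.
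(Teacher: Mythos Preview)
Your proposal is correct and follows essentially the same approach as the paper's own proof: the paper likewise proves the final equality by running the argument of Theorem~\ref{thm:existence-EI} with every object replaced by its order-$i$ version (using $D^{\mathbf p_i}(u_n)$, $D'_{\mathbf p_i}(u_n)$ and the induction $n\p(Q^{(j)}_{\mathbf p_j,0}(u_n))\to\theta_1\cdots\theta_j\tau$), and proves each intermediate equality exactly as in the ``first equality'' of \eqref{eq:max-ei} with \eqref{cond:summability} replaced by \eqref{cond:i-summability} of $\sppj(u_n)$. Your write-up is in fact slightly more explicit than the paper's, but there is no difference in strategy.
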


We notice that in the particular case $i=2$ then $\theta_2$ corresponds to the upcrossings index $\eta$ in \cite{F06}.

\begin{proof}
The proof of the last equality in \eqref{eq:max-ei-i} is basically done as in Theorem~\ref{thm:existence-EI} simply by replacing everything by its corresponding $i$ version.

The proof of the $j$-th equality, with $1\leq j\leq i$, in \eqref{eq:max-ei-i} follows as the proof of the first equality in \eqref{eq:max-ei} except that instead of \eqref{cond:summability}  we use \eqref{cond:i-summability} of the corresponding condition $\sppj$.

Regarding the formula for the extremal index $\theta$ observe that it follows by an easy induction argument from the fact that $\sppj(u_n)$ holds for all $1\leq j\leq i$. In fact, as always, let $(u_n)_{n\in\N}$ be a sequence of levels such that $n(1-F(u_n))=n\p(X_0>u_n)\to \tau$, as $n\to\infty$, for some $\tau\geq0$. Assuming by induction that $n\p(Q_{\mathbf p_{j-1},0}^{(j-1)}(u_n))\to \theta_1\theta_2\ldots\theta_{j-1} \tau$, as $n\to\infty$, by \eqref{cond:i-periodicity}  of condition $\sppj(u_n)$ we must have:
\begin{align*}
n\p\left(Q_{\mathbf p_{j},0}^{(j)}(u_n)\right)&=n\p\left(Q_{\mathbf p_{j-1},0}^{(j-1)}(u_n)\cap \left(Q_{\mathbf p_{j-1},p_{j}}^{(j-1)}(u_n)\right)^c\right)\\
&=n\p\left(Q_{\mathbf p_{j-1},0}^{(j-1)}(u_n)\right)
\p\left(\left(Q_{\mathbf p_{j-1},p_{j}}^{(j-1)}(u_n)\right)^c \Big| Q_{\mathbf p_{j-1},0}^{(j-1)}(u_n)\right)\xrightarrow[n\to\infty]{} \theta_1\theta_2\ldots\theta_{j-1} \tau \theta_j.
\end{align*}
Since, by \eqref{eq:level-un-annulus},  we have $n\p(Q_{\mathbf p_{1},0}^{(1)}(u_n))\to \theta_1 \tau$, as $n\to\infty$, the result follows at once.
\end{proof}

When comparing Theorem~\ref{thm:existence-EI-i} with similar results in the literature, particularly the most similar in \cite{LN89, CHM91} and \cite{F06}, we highlight the following advantages: the interpretation of the EI is explicitly motivated by the existence of underlying periodic phenomena; and the fact that our conditions are weaker, especially because our condition $D^{\mathbf p_i}(u_n)$ is much weaker than $D(u_n)$.  In fact, as we explain in greater depth in Section~\ref{subsubsec:roles-Dp-D'p}, if we had to check $D(u_n)$ for stochastic processes arising from dynamical systems we could only get HTS/RTS for cylinders (see definition in Section~\ref{sec:cyl}) instead of balls, which we do obtain in Corollaries~\ref{cor:HTS/RTS-EI-abs-cont} and \ref{cor:HTS/RTS-EI-eq-st}. In terms of EVL, this means that we would get cylinder EVL with convergence only for certain subsequences $\omega_n$ of time $n\in\N$, which contrasts with our results in Theorems~\ref{thm:EI-abs-cont} and \ref{thm:EI-eq-st}.

Regarding applications of Theorems~\ref{thm:existence-EI} and \ref{thm:existence-EI-i}, we mention that for the examples of stochastic processes that besides $D(u_n)$ also satisfy $D''(u_n)$ from \cite{LN89}, then Theorem~\ref{thm:existence-EI} can be used to prove the existence of an EI. While for the examples we know of stochastic processes that, for some $k\geq 2$, satisfy $D^{(k)}(u_n)$ from \cite{CHM91} instead, then eventually Theorem~\ref{thm:existence-EI-i} can be used for the same purpose.

Besides the applications to stochastic processes coming from dynamical systems given in Section~\ref{sec:EI-DS}, for which $\mpp$ is shown to hold, we give two examples in the appendix, one of Maximum Moving Average sequences and one of an Autoregressive process, to which the results of this section also apply. While these examples are not novel, they illustrate how to check conditions $\spp$, $\mpp$, $D^p(u_n)$, $D'_p(u_n)$, $\sppi$, $D^{\mathbf p_i}(u_n)$ and $D'_{\mathbf p_i}(u_n)$ in different, more classical, settings. The Maximum Moving Average in Appendix~\ref{sec:MMA-101} satisfies $\spp$ with $p=2$, and $\theta=1/2$, while the one in Appendix~\ref{sec:MMA-1101} satisfies $\sppi$, with $i=1,2$, $\mathbf p_2=(p_1,p_2)=(1,3)$, $\Theta_2=(2/3,1/2)$. The Autoregressive process of order 1 (AR(1)), introduced in \cite{C81} and considered in Appendix~\ref{sec:AR1}, is shown to satisfy $\mpp$ with $p=1$ and $\theta=1-1/r$.

\section{The general theory for sequences generated by dynamical systems}
\label{sec:EI-DS}

In this section, we set out the general theory of the extremal index in the context of a discrete time dynamical system $(\X,\mathcal
B,\mu,f)$, where $\X$ is a Riemannian manifold, $\mathcal B$ is the Borel $\sigma$-algebra, $f:\X\to\X$ is a measurable map
and $\mu$ an $f$-invariant probability measure.  We will initially show that $\mpp(u_n)$ can be proved for quite general systems, and later, in Section~\ref{sec:dyn egs}, give specific examples where we can also prove $D_p(u_n)$ and $D_p'(u_n)$ and thus apply Theorem~\ref{thm:existence-EI}.

We consider a Riemannian metric on $\X$
that we denote by `$\dist$' and for any $\zeta\in\X$ we define the ball of radius $\delta>0$ around $\zeta$, as $B_{\delta}(\zeta)=\{x\in\X: \dist(x,\zeta)<\delta\}$. Let $\l$ denote a normalised volume form defined on $\B$ that we call Lebesgue measure.

We suppose that the stochastic processes $X_0, X_1, X_2,\ldots$ defined by \eqref{eq:def-stat-stoch-proc-DS} are such that the r.v. $\varphi:\X\to\R\cup\{\pm\infty\}$
achieves a global maximum at $\zeta\in \X$ (we allow
$\varphi(\zeta)=+\infty$).

In order to study the statistical properties of the system, the invariant probability measure $\mu$ and its properties play a crucial role. First, we want the measure to provide relevant information about the system.
This is achieved, for example, by requiring that the measure is `physical' or even more generally an `equilibrium state'.  We will emphasise the first kind of measures here due to their importance in the study of the statistical properties of dynamical systems.

A measure $\mu$ is said to be \emph{physical} if the Lebesgue measure of the set of points $\U$ (called the basin of $\mu$), for which the law of large numbers holds for any stochastic process defined as in \eqref{eq:def-stat-stoch-proc-DS} for any continuous r.v. $\psi:\X\to\R$, is positive. In other words, if the set of points $x$ such that
\begin{equation}
\label{eq:ergodic-averages}
\frac 1n\sum_{i=0}^{n-1} \psi (f^i(x))\to \int \psi d\mu
\end{equation}
has positive Lebesgue measure. For example, $\mu$ is a physical measure if it is absolutely continuous with respect to Lebesgue, in which case we write $\mu\ll\l$, and \emph{ergodic}, which simply means that \eqref{eq:ergodic-averages} holds $\mu$-a.e. Note that these measures do provide a nice picture of the statistical behaviour of the system, since describing how the time averages $\frac 1n\sum_{i=0}^{n-1} \psi (f^i(x))$  of any continuous function $\psi$ behave, reduces to compute the spacial average $\int \psi d\mu$ simply by integrating $\psi$ against the measure $\mu$. Moreover, this works on a ``physically observable'' set $\U$ of positive Lebesgue measure.

More generally, we can study the statistical properties of a system through the following class of measures, known as equilibrium states.  For good introductions to this topic see for example \cite{Bo75, W82, K98}.

Let $f:\X \to \X$ be a measurable function as above, and define 
$$\M_f:=\left\{f-\text{invariant Borel probability measures on } \X\right\}.$$
I.e., for $\mu\in \M_f$, $\mu(\X)=1$ and for any Borel measurable set $A$, $\mu(f^{-1}(A))=\mu(A)$.
Then for a measurable potential $\phi:\X\to \R$, we define the \emph{pressure} of $(\X,f,\phi)$ to be
$$P(\phi):=\sup_{\mu\in \M_f}\left\{h(\mu)+\int\phi~d\mu:-\int\phi~d\mu<\infty\right\},$$
where $h(\mu)$ denotes the metric entropy of the measure $\mu$, see \cite{W82} for details.
If, for $\mu\in \M_f$, $h(\mu)+\int\phi~d\mu=P(\phi)$ then we say that $\mu$ is an \emph{equilibrium state} for $(\X, f, \phi)$.

The absolutely continuous measures given above can often be shown to be particular examples of equilibrium states.  This is  explained in more depth in Section~\ref{ssec:eq setup}.

\subsection{Measures absolutely continuous with respect to Lebesgue}
\label{subsec:abs-cont}

In this subsection, we assume that the measure $\mu$ is absolutely continuous with respect to Lebesgue. Besides, we assume  that $\zeta$ is a \emph{repelling $p$-periodic point}, which means that $f^p(\zeta)=\zeta$, $f^p$ is differentiable at $\zeta$ and $0<\left|\det D(f^{-p})(\zeta)\right|<1$.  Moreover, we also assume that $\zeta$ is a Lebesgue density point with $0<\frac{d\mu}{d\l}(\zeta)<\infty$ and the observable $\varphi:\X\to\R\cup\{+\infty\}$ is of
the form
\begin{equation}
\label{eq:observable-form} \varphi(x)=g(\dist(x,\zeta)),
\end{equation} where
the function $g:[0,+\infty)\rightarrow {\mathbb
R\cup\{+\infty\}}$ is such that $0$ is a global maximum ($g(0)$ may
be $+\infty$); $g$ is a strictly decreasing bijection $g:V \to W$
in a neighbourhood $V$ of
$0$; and has one of the
following three types of behaviour:
\begin{enumerate}[Type 1:]
\item there exists some strictly positive function
$\hat\pi:W\to\R$ such that for all $y\in\R$
\begin{equation}\label{eq:def-g1}\displaystyle \lim_{s\to
g_1(0)}\frac{g_1^{-1}(s+y\hat\pi(s))}{g_1^{-1}(s)}=\e^{-y};
\end{equation}
\item $g_2(0)=+\infty$ and there exists $\beta>0$ such that
for all $y>0$
\begin{equation}\label{eq:def-g2}\displaystyle \lim_{s\to+\infty}
\frac{g_2^{-1}(sy)}{g_2^{-1}(s)}=y^{-\beta};\end{equation}
\item $g_3(0)=D<+\infty$ and there exists $\gamma>0$ such
that for all $y>0$
\begin{equation}\label{eq:def-g3}\lim_{s\to0}
\frac{g_3^{-1}(D-sy)}{g_3^{-1}(D-s)}= y^\gamma.
\end{equation}
\end{enumerate}

Examples of each one of the three types are as follows:
$g_1(x)=-\log x$ (in this case \eqref{eq:def-g1} is easily verified
with $\hat\pi\equiv1$), $g_2(x)=x^{-1/\alpha}$ for some $\alpha>0$ (condition
\eqref{eq:def-g2} is verified with $\beta=\alpha$) and
$g_3(x)=D-x^{1/\alpha}$ for some $D\in\R$ and $\alpha> 0$ (condition
\eqref{eq:def-g3} is verified with $\gamma=\alpha$).

\begin{remark}
\label{rem:attraction-domains}
  Recall that the d.f. $F$ is given by
$F(u)=\mu(X_0\leq u)$ and $u_F=\sup\{y: F(y)<1\}$. Observe that
if at time $j\in\N$ we have an exceedance of the level $u$
(sufficiently large), i.e., $X_j(x)>u$, then we have an entrance of
the orbit of $x$ into the ball $B_{g^{-1}(u)}(\zeta)$ of radius
$g^{-1}(u)$ around $\zeta$, at time $j$. This means that the
behaviour of the tail of $F$, \ie the behaviour of $1-F(u)$ as $u\to
u_F$ is determined by $g^{-1}$, if we assume that Lebesgue's
Differentiation Theorem holds for $\zeta$, since in that case
$1-F(u)\sim \rho(\zeta) |B_{g^{-1}(u)}(\zeta)|$, where
$\rho(\zeta)=\frac{d\mu}{d\l}(\zeta)$. From classical Extreme Value
Theory we know that the behaviour of the tail determines the limit
law for partial maximums of i.i.d. sequences and vice-versa. The
above conditions are just the translation in terms of the shape of
$g^{-1}$, of the sufficient and necessary conditions on the tail of
$F$ of \cite[Theorem~1.6.2]{LLR83}, in order to exist a
non-degenerate limit distribution for $\hat M_n$.
\end{remark}

Recall that $X_0, X_1, X_2,\ldots$ is given by \eqref{eq:def-stat-stoch-proc-DS} for
observables of the type \eqref{eq:observable-form}, which means the event $\left\{X_0>u\right\}$ corresponds to a ball centred
at $\zeta$. Suppose that $p\in \N$ and consider as before
$$Q_{p,0}^*(u):=\{x:\;\varphi(x)>u\}\cap f^{-p}(\{x:\;\varphi(x)>u\})\text{ and } Q_{p,0}(u):=\{x:\;\varphi(x)>u\}\setminus Q^*(u).$$
The set  $Q_{p}^*(u):=Q_{p,0}^*(u)$ corresponds to a ball, while $Q_{p}(u):=Q_{p,0}(u)$ corresponds to an annulus, both centred at $\zeta$. For all $i\in\N$, set
\[
Q_p^{*i}(u):=\bigcap_{j=0}^{i-1} Q_{p,j}^*(u).
\]

Given the special structure of these dynamically defined stochastic processes, observe that for all $i\in\N$ we have $Q_{p,i}(u)=f^{-i}(Q_p(u))=\{X_i\in Q_{p}(u)\}$, $Q_{p,i}^*(u)=f^{-i}(Q_p^*(u))=\{X_i\in Q_{p}^*(u)\}$ and 
$Q_p^{*i}(u)=\bigcap_{j=0}^{i} f^{-jp}(\{x:\,\varphi(x)>u\})$.

We will provide some conditions which guarantee an Extreme Value Law with a given extremal index.  We will give some systems which satisfy these conditions in Section~\ref{sec:dyn egs}.

\begin{theorem}
  \label{thm:EI-abs-cont}
  Suppose that $\zeta$ is a repelling periodic point of prime period $p$, with $\theta=\theta(\zeta)=1-|\det D(f^{-p})(\zeta)|\in (0,1)$.  Let $(u_n)_{n\in\N}$ be such that $n\mu(X_0>u_n)=n(1-F(u_n))\to\tau$,
  as $n\to\infty$, for some $\tau\geq 0$.
  Assume further that conditions
  $D^p(u_n)$ and $D'_p(u_n)$ hold. Then
  \begin{equation*}
   \lim_{n\to\infty}\mu(M_n\leq u_n)=\lim_{n\to\infty}\mu(\QQ_{p,0,n}(u_n))=\e^{-\theta \tau}.
  \end{equation*}
\end{theorem}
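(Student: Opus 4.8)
The plan is to derive Theorem~\ref{thm:EI-abs-cont} from the abstract Theorem~\ref{thm:existence-EI} by checking that the hypotheses of the latter are met. Since $D^p(u_n)$ and $D'_p(u_n)$ are assumed outright, the only thing that needs verification is that the dynamically-defined process $X_0,X_1,X_2,\ldots$ given by \eqref{eq:def-stat-stoch-proc-DS} with the observable \eqref{eq:observable-form} satisfies condition $\spp(u_n)$ with $p$ equal to the prime period and $\theta=1-|\det D(f^{-p})(\zeta)|$. In fact, it is cleaner to verify the stronger condition $\mpp(u_n)$ (which implies $\spp(u_n)$ by the remark following Condition~\ref{cond:PM}), and this is precisely the sort of statement promised in the opening of Section~\ref{sec:EI-DS} (``we will initially show that $\mpp(u_n)$ can be proved for quite general systems'').

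First I would verify the periodicity part of \eqref{cond:Markovian-Periodicity}. For $1\le j<p$: since $\zeta$ has prime period $p$, the point $f^j(\zeta)\neq\zeta$, so there is a definite gap $\dist(f^j(\zeta),\zeta)>0$; for $u$ large, $\{X_0>u\}=B_{g^{-1}(u)}(\zeta)$ has radius $g^{-1}(u)\to 0$, and by continuity of $f^j$ its image $f^j(B_{g^{-1}(u)}(\zeta))$ is a small neighbourhood of $f^j(\zeta)$, hence eventually disjoint from $B_{g^{-1}(u)}(\zeta)$. Thus $\p(X_j>u_n\mid X_0>u_n)=0$ for all large $n$, giving the first limit. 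For the Markov/geometric part, I would use repulsion: since $0<|\det D(f^{-p})(\zeta)|<1$, the inverse branch $f^{-p}$ fixing $\zeta$ is a contraction near $\zeta$, and by the inverse function theorem $\l(f^{-p}(B_r(\zeta)))\sim |\det D(f^{-p})(\zeta)|\,\l(B_r(\zeta))=(1-\theta)\l(B_r(\zeta))$ as $r\to0$; iterating, $\l\big(\bigcap_{j=0}^{i}f^{-jp}(B_r(\zeta))\big)=\l\big(f^{-ip}(B_r(\zeta))\cap\cdots\big)$, which, because the sets are nested balls around $\zeta$ of radii $\sim(1-\theta)^{j}r$, satisfies $\l(Q_p^{*i}(u))\sim(1-\theta)^{i}\l(B_{g^{-1}(u)}(\zeta))$. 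Because $\zeta$ is a Lebesgue density point with $0<\frac{d\mu}{d\l}(\zeta)=\rho(\zeta)<\infty$, Lebesgue's differentiation theorem upgrades all these volume asymptotics to $\mu$-measure asymptotics: $\mu(X_0>u)\sim\rho(\zeta)\l(B_{g^{-1}(u)}(\zeta))$ and $\mu(Q_p^{*i}(u))\sim(1-\theta)^i\mu(X_0>u)$, uniformly in $i$ in the appropriate sense. Dividing gives $\p(X_p>u_n,X_{2p}>u_n,\ldots,X_{ip}>u_n\mid X_0>u_n)\to(1-\theta)^i$ with the ratio to $(1-\theta)^i$ tending to $1$ uniformly in $i$, which is exactly \eqref{cond:Markovian-Periodicity}. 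In particular $\lim_n\p(X_p>u_n\mid X_0>u_n)=1-\theta$, completing $\mpp(u_n)$ and hence $\spp(u_n)$.

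Having established $\spp(u_n)$ for this $p$ and $\theta\in(0,1)$, the hypotheses of Theorem~\ref{thm:existence-EI} are all in place — the level sequence $(u_n)_n$ with $n\mu(X_0>u_n)\to\tau$ is given, and $D^p(u_n)$, $D'_p(u_n)$ are assumed — so \eqref{eq:max-ei} yields immediately $\lim_n\mu(M_n\le u_n)=\lim_n\mu(\QQ_{p,0,n}(u_n))=\e^{-\theta\tau}$, which is the assertion of Theorem~\ref{thm:EI-abs-cont}.

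The main obstacle I expect is making the uniform-in-$i$ control of $\mu(Q_p^{*i}(u))\sim(1-\theta)^i\mu(X_0>u)$ genuinely rigorous rather than heuristic: one must handle the distortion of $f^{-p}$ near $\zeta$ (the derivative is not exactly constant, so $\bigcap_{j=0}^i f^{-jp}(B_r(\zeta))$ is only approximately a round ball) and then pass from Lebesgue measure to the a.c.\ measure $\mu$ using only that $\zeta$ is a density point, without assuming continuity of the density. The clean way is to note that $g^{-1}(u)\to0$ as $u\to u_F$, fix $\eps>0$, choose $r_0$ so that the contraction ratio of $f^{-p}$ on $B_{r_0}(\zeta)$ lies in $(1-\theta-\eps,1-\theta+\eps)$ and the density ratio $\mu(B_r(\zeta))/(\rho(\zeta)\l(B_r(\zeta)))$ lies in $(1-\eps,1+\eps)$ for all $r<r_0$, and then for all $u$ large enough (so $g^{-1}(u)<r_0$) sandwich $\mu(Q_p^{*i}(u))$ between $(1-\theta\mp\eps)^i(1\mp\eps)^2\mu(X_0>u)$; sending $n\to\infty$ then $\eps\to0$ gives the claim. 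This is exactly the ``minor adjustments'' of the Rychlik/Collet arguments referenced elsewhere, and once it is done the proof of Theorem~\ref{thm:EI-abs-cont} is a one-line appeal to Theorem~\ref{thm:existence-EI}.
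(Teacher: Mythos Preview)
Your proposal is correct and follows essentially the same approach as the paper's proof: verify $\mpp(u_n)$ by combining the repelling nature of $\zeta$ (via the Mean Value/Inverse Function Theorem) with the Lebesgue density point property to get $\mu(Q_p^{*i}(u))\sim(1-\theta)^i\mu(X_0>u)$, then invoke Theorem~\ref{thm:existence-EI}. Your treatment is in fact more careful than the paper's, which handles the $1\le j<p$ case and the uniformity-in-$i$ issue only implicitly.
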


\begin{proof}
To prove Theorem~\ref{thm:EI-abs-cont} we only need to show property $\mpp(u_n)$ and apply Theorem~\ref{thm:existence-EI}. Since $\zeta$ is a repelling periodic point, by the Mean Value Theorem we have
$$\l(Q^*(u))\sim(1-\theta)\l(\{x:\varphi(x)>u\}) $$
for $u$ close to $u_F$. By induction, we get
\[
\l(Q_p^{*i}(u))=\l\left(\bigcap_{j=0}^{i} f^{-jp}(\{x:\,\varphi(x)>u\})\right)\sim (1-\theta)^i \l(\{x:\varphi(x)>u\}),
\]
for $u$ close to $u_F$. Consequently, using the fact that $\zeta$ is a Lebesgue density point, we have for $i\in\N$,
\begin{align*}
\mu(X_p>u,\ldots,X_{ip}>u|X_0>u)&=\frac{\mu(Q_p^{*i}(u))}{\mu(X_0>u)} \sim \frac{\l(Q_p^{*i}(u))}{\l(\{x:\phi(x)>u\})}\\
	&\sim (1-\theta)^i.
\end{align*}
So replacing $u$ with $(u_n)_n$, summing over $i$ and letting $n\to \infty$, we have $\mpp$, as required.
\end{proof}

The relation between EVL and HTS established in \cite{FFT10} allows us to obtain the following:
\begin{corollary}
\label{cor:HTS/RTS-EI-abs-cont}
Suppose that $\zeta$ is a repelling periodic point of prime period $p$, with $\theta=\theta(\zeta)=1-|\det D(f^{-p})(\zeta)|\in (0,1)$.  Let $(u_n)_{n\in\N}$ be such that $n\mu(X_0>u_n)=n(1-F(u_n))\to\tau$,
  as $n\to\infty$, for some $\tau\geq 0$.
  Assume further that conditions
  $D^p(u_n)$ and $D'_p(u_n)$ hold. Then we have Hitting Time Statistics to balls at $\zeta$,
  \begin{equation}
  \label{eq:HTS-ei}
   \lim_{n\to\infty}\mu\left(r_{B_{\delta_n}(\zeta)}< \frac t{\mu(B_{\delta_n}(\zeta))}\right)=1-\e^{-\theta t},
  \end{equation}
  and Return Time Statistics to balls at $\zeta$,
   \begin{equation}
  \label{eq:RTS-ei}
   \lim_{n\to\infty}\mu_{B_{\delta_n}(\zeta)}\left(r_{B_{\delta_n}(\zeta)}< \frac t{\mu(B_{\delta_n}(\zeta))}\right)=(1-\theta)+\theta(1-\e^{-\theta t}),
  \end{equation}
  for all sequences $\delta_n\to0$, as $\n\to\infty$.
\end{corollary}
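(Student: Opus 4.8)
The plan is to deduce the Corollary from Theorem~\ref{thm:EI-abs-cont} together with the dictionary between Extreme Value Laws and Hitting/Return Time Statistics of \cite{FFT10} and the HTS$\leftrightarrow$RTS correspondence of \cite{HLV05}, so that beyond some bookkeeping no new work is required.

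\emph{Step 1: from Theorem~\ref{thm:EI-abs-cont} to a full EVL.} Theorem~\ref{thm:EI-abs-cont} gives, for the process $X_n=\varphi\circ f^n$ with $\varphi$ of the form \eqref{eq:observable-form}, the limit $\mu(M_n\le u_n)\to\e^{-\theta\tau}$ along the prescribed sequence $(u_n)_n$. Since $\mu\ll\l$, the distribution function $F$ has no atoms near $u_F$, so for every $\tau\ge 0$ one can choose levels $u_n=u_n(\tau)$ realising \eqref{eq:un}; the associated exceedance sets $\{X_0>u_n\}=B_{g^{-1}(u_n)}(\zeta)$ shrink to $\zeta$, and, since (as in the proof of Theorem~\ref{thm:EI-abs-cont}) the probabilities of the annuli $Q_{p,0}(u_n)$ are asymptotically determined by $\mu(X_0>u_n)\sim\tau/n$, the conditions $D^p(u_n)$, $D'_p(u_n)$ (and $\mpp(u_n)$, which always holds here by the Mean Value Theorem argument) transfer to each such family. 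Applying Theorem~\ref{thm:EI-abs-cont} along these sequences yields \eqref{eq:EVL-law} with $\bar H(\tau)=\e^{-\theta\tau}$ for all $\tau\ge 0$; i.e. the process has Extremal Index $\theta$.

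\emph{Step 2: from the EVL to HTS, and then to RTS, for balls.} Now invoke the elementary identity \eqref{eq:rel-Mn-r}, $\{M_n\le u\}=\{r_{\{X_0>u\}}>n\}$, together with the fact that $\{X_0>u\}$ is exactly the ball $B_{g^{-1}(u)}(\zeta)$, of measure $1-F(u)$. This places us in the setting of \cite[Theorem~2]{FFT10}, whose EVL$\Rightarrow$HTS direction converts the conclusion of Step 1 into HTS $G(t)=1-\bar H(t)=1-\e^{-\theta t}$ to balls at $\zeta$, namely \eqref{eq:HTS-ei}, for every sequence $\delta_n\to 0$. The passage from the one-parameter families $u_n(\tau)$ to an arbitrary radius sequence is internal to that theorem: one squeezes $r_{B_{\delta_n}(\zeta)}$ between hitting times to balls $B_{g^{-1}(u_n(\tau))}(\zeta)$ with $\tau$ in a small neighbourhood, using monotonicity of $\delta\mapsto r_{B_\delta(\zeta)}$ and continuity of $t\mapsto\e^{-\theta t}$. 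For the return times, the Main Theorem of \cite{HLV05} says HTS $G$ is equivalent to RTS $\tilde G$ with $G(t)=\int_0^t(1-\tilde G(s))\,ds$; differentiating gives $1-\tilde G(t)=G'(t)=\theta\e^{-\theta t}$, hence $\tilde G(t)=1-\theta\e^{-\theta t}=(1-\theta)+\theta(1-\e^{-\theta t})$, which is \eqref{eq:RTS-ei}. (Equivalently, one reruns the proof of \cite[Theorem~2]{FFT10} using \eqref{eq:rel-Mn-r} and \eqref{eq:HTS-RTS}, exactly as sketched around \eqref{eq:RTS-EI}.)

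The only genuinely delicate point is Step 1: the hypotheses name $D^p(u_n)$ and $D'_p(u_n)$ for a single level sequence, while the \cite{FFT10} machinery needs the EVL to hold for the whole admissible family $\{u_n(\tau)\}_{\tau\ge 0}$; I expect the main effort to be in verifying that these mixing-type conditions are insensitive to the precise choice of $u_n(\tau)$, so that the full EVL — not merely its value at one $\tau$ — is available. Everything downstream is then a direct citation.
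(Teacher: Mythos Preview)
Your proposal is correct and follows essentially the same route as the paper: apply Theorem~\ref{thm:EI-abs-cont} to obtain the EVL, invoke \cite[Theorem~2]{FFT10} to pass to HTS for balls, and then use the integral relation of \cite[Main~Theorem]{HLV05} to deduce the RTS formula. The paper's own proof is a two-line citation of exactly these three ingredients; your Step~2 computation of $\tilde G$ and your remark about needing the EVL for the whole family $\{u_n(\tau)\}_{\tau\ge 0}$ simply make explicit what the paper leaves to the cited references.
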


\begin{proof}
    The limit \eqref{eq:HTS-ei} is an immediate consequence of Theorem~\ref{thm:EI-abs-cont} and \cite[Theorem~2]{FFT10}. The limit  \eqref{eq:RTS-ei} derives from \eqref{eq:HTS-ei} and the integral formula in \cite[Main~Theorem]{HLV05} that relates the HTS and RTS distributions.
\end{proof}

\subsection{Equilibrium states}
\label{ssec:eq setup}

We gave the notions of pressure and equilibrium states at the beginning of this section.  Here we will introduce further notions in order to generalise Theorem~\ref{thm:EI-abs-cont} to general equilibrium states.  A measure $m$ is called a \emph{$\phi$-conformal} measure if $m(\X)=1$ and if whenever $f:A\to f(A)$ is a bijection, for a Borel set $A$, then
$$m(f(A))=\int_A e^{-\phi}~dm.$$

Note that for example for a smooth map interval map $f$, Lebesgue measure is $\phi$-conformal for $\phi(x):=-\log|Df(x)|$.  Moreover, if for example $f$ is a topologically transitive quadratic interval map then as in Ledrappier \cite{Le81}, any physical measure $\mu$ with $h(\mu)>0$ is an equilibrium state for $\phi$.  That this also holds for the even simpler case of piecewise smooth uniformly expanding maps follows from Section~\ref{ssec:Rychlik}.

We define
$$S_n\phi(x):=\phi(x)+\cdots +\phi\circ f^{n-1}(x).$$
In the following proposition we will assume that for a potential $\phi$, we have $P(\phi)=0$.  Note that if $P(\phi)=p$ for $p\in (-\infty,\infty)$ then we can  replace $\phi$ by $\phi-p$ to obtain $P(\phi-p)=0$.  Clearly any equilibrium state for $\phi$ is an equilibrium state for $\phi-p$ and vice versa.  Recall that we are assuming that $f:\X\to \X$ is a measurable map of a Riemannian manifold which is differentiable at a periodic point $\zeta\in \X$.

Again, we consider that the stochastic processes $X_0, X_1, X_2,\ldots$ defined by \eqref{eq:def-stat-stoch-proc-DS} are such that the r.v. $\varphi:\X\to\R\cup\{\pm\infty\}$
achieves a global maximum at $\zeta\in \X$. However, in order to still be able to establish the connection between EVL and HTS in this setting, where the invariant measure may present a more irregular behaviour than when it is absolutely continuous, we need to tailor the observable $\varphi$ to cope with this lack of regularity as in  \cite{FFT11}. Essentially, this means that we need to replace $\dist(x, \zeta)$  in \eqref{eq:observable-form} with $\mu(B_{\dist(x, \zeta)}(\zeta))$. So throughout this section the stochastic processes $X_0, X_1, X_2,\ldots$ defined by \eqref{eq:def-stat-stoch-proc-DS} is such that the r.v. $\varphi:\X\to\R\cup\{\pm\infty\}$ is given by
\begin{equation}
\label{eq:observable-form-eq-st} \varphi(x)=g(\mu(B_{\dist(x, \zeta)}(\zeta))),
\end{equation}
where $g$ is as in Section~\ref{subsec:abs-cont}.

Since for this application, we would like a sequence $(u_n)_n$ such that $\lim_{n\to\infty}n\mu(\{X_0>u_n\}) =\tau$, it is useful to assume that the measure $\mu$ has some continuity: otherwise, `jumps' in the size of balls around $\zeta$ may prevent us from finding such a sequence.  To deal with this issue, in \cite{FFT11}, we defined a function $\size$ for small $\eta\geq 0$ and given by
\begin{equation}
\label{eq:def-h}
\size(\eta)=\mu(B_\eta (\zeta)).
\end{equation}
We required that $\size$ is continuous on $\eta$. For example, if $\X$ is an interval and $\mu$ a Borel probability with no atoms,\ie points with positive $\mu$ measure, then $\size$ is continuous.

\begin{theorem}
Let $\phi:\X\to \R$ be a continuous potential with $P(\phi)=0$, a conformal measure $m_\phi$ and an equilibrium state $\mu_\phi\ll m_\phi$.  Suppose that the observable $\varphi$ is as in \eqref{eq:observable-form-eq-st} and that $\zeta\in \X$ is a repelling periodic point of prime period $p$ such that $0<\frac{d\mu_\phi}{dm_\phi}(\zeta)<\infty$ and
for all $u$ sufficiently close to $u_F$, \begin{equation}
\sum_{j=0}^\infty \sup\{|S_p\phi(x)-S_p\phi(\zeta)|:x\in \{\varphi>u\}\cap f^{-jp}(\{\varphi>u\})\}<\infty.\label{eq:cty phi}
\end{equation}
Let $(u_n)_{n\in\N}$ be such that $n\mu(X_0>u_n)=n(1-F(u_n))\to\tau$,
  as $n\to\infty$, for some $\tau\geq 0$.
  Assume further that conditions
  $D^p(u_n)$ and $D'_p(u_n)$ hold. Then
  \begin{equation*}
   \lim_{n\to\infty}\mu(M_n\leq u_n)=\lim_{n\to\infty}\mu(\QQ_{p,0,n}(u_n))=\e^{-\theta \tau},
  \end{equation*}
where $\theta=1-e^{S_p\phi(\zeta)}$.
\label{thm:EI-eq-st}
\end{theorem}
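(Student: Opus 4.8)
The plan is to run the proof in complete parallel with that of Theorem~\ref{thm:EI-abs-cont}: I would first establish that, under the hypotheses, the process $X_0,X_1,\ldots$ satisfies condition $\mpp(u_n)$ with $\theta=1-e^{S_p\phi(\zeta)}$, and then simply invoke Theorem~\ref{thm:existence-EI} (the conditions $D^p(u_n)$ and $D'_p(u_n)$ being hypotheses, there is nothing to check for them). The only place where the argument genuinely differs from Theorem~\ref{thm:EI-abs-cont} is that the Mean Value Theorem for $Df^{-p}$ and the Lebesgue density property of $\zeta$ get replaced, respectively, by the defining identity of the $\phi$-conformal measure $m_\phi$ together with hypothesis \eqref{eq:cty phi}, and by the assumption that $\zeta$ is a density point of $\mu_\phi$ with respect to $m_\phi$ with $0<\frac{d\mu_\phi}{dm_\phi}(\zeta)<\infty$. (Here $\theta=1-e^{S_p\phi(\zeta)}\in(0,1)$: it is $<1$ since $\phi$ is continuous, hence $S_p\phi(\zeta)$ finite, and $>0$ since $P(\phi)=0$ while the Dirac measure on the orbit of $\zeta$ is not an equilibrium state; this is what ``repelling'' should mean here.)

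For the set-up I would note that, since $\zeta$ is a repelling periodic point of prime period $p$, there is a contracting local inverse branch $h$ of $f^p$ with $h(\zeta)=\zeta$, and that by the form \eqref{eq:observable-form-eq-st} of $\varphi$ and the continuity of $\size$, for every $u$ close to $u_F$ the set $\{X_0>u\}=\{\varphi>u\}$ is a ball $B_{r(u)}(\zeta)$ with $r(u)\to0$ and $\mu_\phi(\{\varphi>u\})=g^{-1}(u)=1-F(u)$. As in the proof of Theorem~\ref{thm:EI-abs-cont} one checks that for such $u$ the sets $Q_p^{*i}(u):=\bigcap_{j=0}^{i}f^{-jp}(\{\varphi>u\})=\{X_0>u,X_p>u,\ldots,X_{ip}>u\}$ equal $h^i(\{\varphi>u\})$, so they form a nested sequence of neighbourhoods of $\zeta$ shrinking to $\{\zeta\}$, $f^p$ maps $Q_p^{*i}(u)$ bijectively onto $Q_p^{*(i-1)}(u)$, and $Q_p^{*i}(u)\subseteq\{\varphi>u\}\cap f^{-ip}(\{\varphi>u\})$. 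The first half of \eqref{cond:Markovian-Periodicity} is then immediate from primality of the period: for $1\le j<p$ we have $f^j(\zeta)\neq\zeta$, so $\{\varphi>u\}\cap f^{-j}(\{\varphi>u\})=\emptyset$ for all $u$ sufficiently close to $u_F$.

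The heart of the matter is the one-step recursion. Applying the defining identity of the $\phi$-conformal measure $m_\phi$ to the bijection $f^p\colon Q_p^{*i}(u)\to Q_p^{*(i-1)}(u)$ gives
\[
m_\phi\big(Q_p^{*(i-1)}(u)\big)=\int_{Q_p^{*i}(u)}e^{-S_p\phi}\,dm_\phi .
\]
Setting $c_i(u):=\sup\{|S_p\phi(x)-S_p\phi(\zeta)|:x\in\{\varphi>u\}\cap f^{-ip}(\{\varphi>u\})\}$ and $\theta:=1-e^{S_p\phi(\zeta)}$, the inclusion $Q_p^{*i}(u)\subseteq\{\varphi>u\}\cap f^{-ip}(\{\varphi>u\})$ yields
\[
(1-\theta)\,e^{-c_i(u)}\,m_\phi\big(Q_p^{*(i-1)}(u)\big)\le m_\phi\big(Q_p^{*i}(u)\big)\le(1-\theta)\,e^{c_i(u)}\,m_\phi\big(Q_p^{*(i-1)}(u)\big),
\]
and, iterating from $Q_p^{*0}(u)=\{\varphi>u\}$,
\[
(1-\theta)^i\,e^{-\sum_{j=1}^i c_j(u)}\le\frac{m_\phi\big(Q_p^{*i}(u)\big)}{m_\phi(\{\varphi>u\})}\le(1-\theta)^i\,e^{\sum_{j=1}^i c_j(u)} .
\]
Now hypothesis \eqref{eq:cty phi} is precisely the statement that $\sum_{j\ge1}c_j(u)<\infty$ for $u$ close to $u_F$; since each $c_j(u)$ is nonincreasing in $u$ with $c_j(u)\to0$ as $u\to u_F$ (by continuity of $S_p\phi$ at $\zeta$, the relevant sets shrinking to $\{\zeta\}$), a tail estimate gives $\sum_{j\ge1}c_j(u)\to0$ as $u\to u_F$, so that $\frac{m_\phi(Q_p^{*i}(u))}{(1-\theta)^i m_\phi(\{\varphi>u\})}\to1$ uniformly in $i$. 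Finally, since all the sets $Q_p^{*i}(u)$ and $\{\varphi>u\}$ sit inside $B_{r(u)}(\zeta)$ with $r(u)\to0$, the density-point hypothesis --- argued as in \cite{FFT11}, using that in this setting the $h^i(\{\varphi>u\})$ are comparable to balls --- gives $\mu_\phi(A)/m_\phi(A)\to\frac{d\mu_\phi}{dm_\phi}(\zeta)$ uniformly over Borel $A\subseteq B_{r(u)}(\zeta)$; combining, $\frac{\mu_\phi(Q_p^{*i}(u_n))}{(1-\theta)^i\mu_\phi(\{\varphi>u_n\})}\to1$ uniformly in $i$. This is the second half of \eqref{cond:Markovian-Periodicity}, so $\mpp(u_n)$ --- and hence $\spp(u_n)$ --- holds with this $\theta$, and Theorem~\ref{thm:existence-EI} finishes the proof. (As a sanity check: when $\mu_\phi\ll\l$ and $\phi=-\log|\det Df|$ one has $S_p\phi(\zeta)=\log|\det Df^{-p}(\zeta)|$, so $\theta=1-|\det Df^{-p}(\zeta)|$, recovering Theorem~\ref{thm:EI-abs-cont}.)

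The main obstacle, I expect, is making the two limits above \emph{uniform in $i$}: controlling the distortion of $S_p\phi$ simultaneously along the whole tower of preimages of $\{\varphi>u\}$ --- which is exactly what the summability built into \eqref{eq:cty phi} provides --- and ensuring the Radon--Nikodym density is regular enough at $\zeta$ for the passage from $m_\phi$ to $\mu_\phi$ to be uniform, which is inherited from the density-point hypothesis together with the analysis already carried out in \cite{FFT11}. Beyond these two points, everything reduces to the bookkeeping already done, in the absolutely continuous case, in the proof of Theorem~\ref{thm:EI-abs-cont}.
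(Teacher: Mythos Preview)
Your proposal is correct and follows essentially the same route as the paper: verify $\mpp(u_n)$ with $\theta=1-e^{S_p\phi(\zeta)}$ via the conformal identity for $m_\phi$ together with the distortion control from \eqref{eq:cty phi}, pass from $m_\phi$ to $\mu_\phi$ using the density-point hypothesis, and then invoke Theorem~\ref{thm:existence-EI}. The paper packages the conformal computation into a separate lemma (Lemma~\ref{lem:conformal}) establishing $m_\phi(Q_p^{*i}(u))\sim(1-\theta)^i m_\phi(\{\varphi>u\})$, but your explicit one-step recursion and iteration with the error terms $c_i(u)$ is exactly the content of that lemma's proof, and your tail argument for $\sum_j c_j(u)\to0$ makes explicit what the paper states in one line.
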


Condition \eqref{eq:cty phi} is to control the distortion of $\phi$ on small scales.  It follows for example from a H\"older condition on $\phi$.

\begin{remark}
If $\zeta\in \X$ is a repelling $p$-periodic point in the support of a $\phi$-conformal measure $m_\phi$ as in Theorem~\ref{thm:EI-eq-st}, then $S_p\phi(\zeta)$ must be non-positive.  We can show this by taking a very small set $A$ around $\zeta$ such that $f^p:A \to f^p(A)$ is a bijection and such that $A \subset f^p(A)$, then
$$m_\phi(f^p(A))=\int_Ae^{-S_p\phi}~dm_\phi\approx e^{-S_p\phi(\zeta)}m_\phi(A).$$  So if $S_p\phi(\zeta)>0$ then $m_\phi(f^p(A))<m_\phi(A)$, which is impossible.
\label{rmk:neg phi}
\end{remark}

Using the theory developed in \cite{FFT11}, an analogue of Corollary~\ref{cor:HTS/RTS-EI-abs-cont} holds for equilibrium states, namely:

\begin{corollary}
\label{cor:HTS/RTS-EI-eq-st}
Under the conditions of Theorem~\ref{thm:EI-eq-st} we have Hitting Time Statistics to balls at $\zeta$,
  \begin{equation*}
   \lim_{n\to\infty}\mu\left(r_{B_{\delta_n}(\zeta)}< \frac t{\mu(B_{\delta_n}(\zeta))}\right)=1-\e^{-\theta t},
  \end{equation*}
  and Return Time Statistics to balls at $\zeta$,
   \begin{equation*}
   \lim_{n\to\infty}\mu_{B_{\delta_n}(\zeta)}\left(r_{B_{\delta_n}(\zeta)}< \frac t{\mu(B_{\delta_n}(\zeta))}\right)=(1-\theta)+\theta(1-\e^{-\theta t}),
  \end{equation*}
  for all sequences $\delta_n\to0$, as $\n\to\infty$.
\end{corollary}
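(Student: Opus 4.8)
The plan is to deduce the statement from the Extreme Value Law already supplied by Theorem~\ref{thm:EI-eq-st} together with the general EVL$\Leftrightarrow$HTS/RTS correspondence for balls, exactly as Corollary~\ref{cor:HTS/RTS-EI-abs-cont} was obtained from Theorem~\ref{thm:EI-abs-cont} and \cite[Theorem~2]{FFT10}, except that here the relevant correspondence is the one developed in \cite{FFT11} which does not require $\mu$ to be absolutely continuous with respect to Lebesgue. First I would record that, under the standing hypotheses, Theorem~\ref{thm:EI-eq-st} (applied for each $\tau>0$, the conditions $\spp(u_n)$, $D^p(u_n)$ and $D'_p(u_n)$ being in force for the corresponding level sequences) says that the stationary process $X_0,X_1,X_2,\ldots$ defined by \eqref{eq:def-stat-stoch-proc-DS} with the tailored observable \eqref{eq:observable-form-eq-st} has an EVL with $\bar H(\tau)=\e^{-\theta\tau}$, where $\theta=1-\e^{S_p\phi(\zeta)}$; that is, it has extremal index $\theta$.

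Next I would pass from this EVL to HTS for balls at $\zeta$. The mechanism is the elementary identity \eqref{eq:rel-Mn-r}, $\{M_n\leq u\}=\{r_{\{X_0>u\}}>n\}$, combined with two structural facts about the present observable: for $u$ close to $u_F$ the set $\{X_0>u\}=\{\varphi>u\}$ is a ball centred at $\zeta$, and its $\mu$-measure is governed by $\size$. Given any sequence $(\delta_n)_n$ with $\delta_n\to 0$ and any $t\geq 0$, set $U_n=B_{\delta_n}(\zeta)$, pick $u_n$ with $\{X_0>u_n\}=U_n$, and put $N_n=[t/\mu(U_n)]$; then $N_n\to\infty$ and $N_n\,\mu(X_0>u_n)\to t$, so the EVL (with $\tau=t$) together with \eqref{eq:rel-Mn-r} gives
\[
\mu\!\left(r_{U_n}<\tfrac{t}{\mu(U_n)}\right)=1-\mu\big(M_{N_n}\leq u_n\big)\longrightarrow 1-\e^{-\theta t},
\]
which is \eqref{eq:HTS-ei}. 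Continuity of $\size$ (assumed throughout Section~\ref{ssec:eq setup}, and automatic when $\mu$ has no atoms) is what guarantees that the reindexing above can always be carried out for every target value $t$; this is precisely the content of the EVL$\Leftrightarrow$HTS theorem of \cite{FFT11}, so in the write-up I would simply invoke that reference here rather than reprove it.

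Finally, the RTS statement \eqref{eq:RTS-ei} comes for free from the integral relation \eqref{eq:HTS-RTS} of \cite[Main~Theorem]{HLV05}: with $G(t)=1-\e^{-\theta t}$ and $G(t)=\int_0^t\bigl(1-\tilde G(s)\bigr)\,ds$, differentiation yields $1-\tilde G(t)=\theta\e^{-\theta t}$, i.e. $\tilde G(t)=1-\theta\e^{-\theta t}=(1-\theta)+\theta\bigl(1-\e^{-\theta t}\bigr)$, as claimed. The only point genuinely needing attention — and the one I would treat most carefully — is checking that the hypotheses of the \cite{FFT11} correspondence are actually met in this setting: that $\size$ is continuous (so that suitable level sequences exist for every $t$) and that the observable is of the form \eqref{eq:observable-form-eq-st} (so that the size of $\{X_0>u\}$ is measured by $\mu$ itself, which is what makes the passage to balls insensitive to the possible irregularity of $\mu_\phi$). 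Once these are in place there is no further analytic estimate to perform beyond what Theorem~\ref{thm:EI-eq-st}, \cite{FFT11} and \cite{HLV05} already provide, so the corollary is immediate.
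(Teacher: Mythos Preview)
Your proposal is correct and follows essentially the same route as the paper, which treats this corollary as immediate from Theorem~\ref{thm:EI-eq-st} together with the EVL--HTS correspondence of \cite{FFT11} (in place of \cite[Theorem~2]{FFT10}) and the integral relation \eqref{eq:HTS-RTS} from \cite{HLV05}. In fact the paper does not spell out a separate proof at all, simply noting that the argument for Corollary~\ref{cor:HTS/RTS-EI-abs-cont} carries over verbatim once one substitutes the reference \cite{FFT11}; your more explicit discussion of the role of the continuity of $\size$ and of the observable form \eqref{eq:observable-form-eq-st} is a welcome elaboration of what the paper leaves implicit.
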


The proof of Theorem~\ref{thm:EI-eq-st} follows almost immediately from the following lemma.

\begin{lemma}
Let $\phi:\X\to \R$ be a potential which is continuous at $\zeta, f(\zeta), \ldots, f^{p-1}(\zeta)$ as in Theorem~\ref{thm:EI-eq-st} with $P(\phi)=0$.  If $\phi$ has a conformal measure $m_\phi$ then $\frac{m_\phi(Q_p^*(u))}{m_\phi(\{\phi>u\})} \to e^{S_p\phi(\zeta)}$ as $u\to u_F$.
Moreover, if \eqref{eq:cty phi} holds
then $$m_\phi(Q_p^{* i}(u))\sim m_\phi(\{\varphi>u\}) e^{S_{ip}\phi(\zeta)} =m_\phi(\{\varphi>u\}) (1-\theta)^i$$
for  $\theta=1-e^{S_p\phi(\zeta)}$.
\label{lem:conformal}
\end{lemma}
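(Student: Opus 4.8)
The plan is to reduce the statement to the local geometry of $f^p$ near the repelling periodic point $\zeta$ and then apply the $\phi$-conformality of $m_\phi$ iterated $ip$ times; this is the $m_\phi$-version of the Lebesgue/Jacobian computation in the proof of Theorem~\ref{thm:EI-abs-cont}, with $e^{-\phi}$ playing the role of the derivative of $f$. First I would observe that, since $g$ is a strictly decreasing bijection near $0$ and (by the running assumption) $\size$ is continuous, the superlevel set $\{\varphi>u\}$ for $\varphi$ as in \eqref{eq:observable-form-eq-st} is a ball $B_{\delta(u)}(\zeta)$ with $\delta(u)\to 0$ as $u\to u_F$. Because $\zeta$ is repelling of prime period $p$, $f^p$ is a local diffeomorphism at $\zeta$ with $|\det Df^p(\zeta)|>1$, so on a fixed neighbourhood of $\zeta$ there is a contracting inverse branch $h$ of $f^p$ with $h(\zeta)=\zeta$. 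For $u$ close to $u_F$ one checks, by induction on $i$, that
\[
Q_p^{*i}(u)=\bigcap_{j=0}^{i}f^{-jp}(\{\varphi>u\})=h^{i}\big(B_{\delta(u)}(\zeta)\big),
\]
the inductive step using that for any $S\subset B_{\delta(u)}(\zeta)$ the only part of $f^{-p}(S)$ meeting $B_{\delta(u)}(\zeta)$ is $h(S)$ (here local injectivity of $f^p$ is what is used). In particular $f^{ip}$ restricts to a bijection from $Q_p^{*i}(u)$ onto $\{\varphi>u\}$, and $f^p$ restricts to a bijection $Q_p^{*i}(u)\to Q_p^{*(i-1)}(u)$.

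Next I would upgrade the one-step conformality $m_\phi(f(E))=\int_E e^{-\phi}\,dm_\phi$ (valid when $f|_E$ is injective), in the usual change-of-variables form $\int_{f(E)}\psi\,dm_\phi=\int_E(\psi\circ f)\,e^{-\phi}\,dm_\phi$, to its $p$-step version $\int_{f^p(A)}\psi\,dm_\phi=\int_A(\psi\circ f^p)\,e^{-S_p\phi}\,dm_\phi$ for small $A$: since $f^p|_A$ injective forces $f$ to be injective on each of $A,f(A),\dots,f^{p-1}(A)$, one simply composes the one-step formula $p$ times. Iterating along the inverse branch of the previous paragraph,
\[
m_\phi(\{\varphi>u\})=m_\phi\big(f^{ip}(Q_p^{*i}(u))\big)=\int_{Q_p^{*i}(u)}e^{-S_{ip}\phi}\,dm_\phi .
\]
The case $i=1$ already yields the first assertion of the lemma: $Q_p^{*}(u)\subset B_{\delta(u)}(\zeta)$ shrinks to $\zeta$, so continuity of $\phi$ at $\zeta,f(\zeta),\dots,f^{p-1}(\zeta)$ gives $\sup_{Q_p^{*}(u)}|S_p\phi-S_p\phi(\zeta)|\to 0$, hence $m_\phi(Q_p^{*}(u))/m_\phi(\{\varphi>u\})\to e^{S_p\phi(\zeta)}$.

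For the ``moreover'' part I would estimate $S_{ip}\phi$ on $Q_p^{*i}(u)$, uniformly in $i$. By $p$-periodicity, $S_{ip}\phi(\zeta)=iS_p\phi(\zeta)$, so $e^{S_{ip}\phi(\zeta)}=(1-\theta)^{i}$. For $x\in Q_p^{*i}(u)=h^{i}(B_{\delta(u)}(\zeta))$ one has $f^{jp}x\in Q_p^{*(i-j)}(u)\subset\{\varphi>u\}\cap f^{-(i-j)p}(\{\varphi>u\})$, so writing
\[
\varepsilon_k(u):=\sup\{|S_p\phi(y)-S_p\phi(\zeta)|:\ y\in\{\varphi>u\}\cap f^{-kp}(\{\varphi>u\})\},
\]
telescoping $S_{ip}\phi(x)-S_{ip}\phi(\zeta)=\sum_{j=0}^{i-1}\big(S_p\phi(f^{jp}x)-S_p\phi(\zeta)\big)$ gives $|S_{ip}\phi(x)-S_{ip}\phi(\zeta)|\le\sum_{j=0}^{i-1}\varepsilon_{i-j}(u)\le\sum_{k=0}^{\infty}\varepsilon_k(u)=:E(u)$ for \emph{every} $i$. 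Condition \eqref{eq:cty phi} is exactly the statement $E(u)<\infty$ for $u$ near $u_F$; moreover each $\varepsilon_k(u)$ is non-increasing as $u\uparrow u_F$ and $\varepsilon_k(u)\to 0$ (the relevant sets shrink to $\zeta$ and $S_p\phi$ is continuous there), so dominated convergence gives $E(u)\to 0$. Feeding $|S_{ip}\phi-S_{ip}\phi(\zeta)|\le E(u)$ into the displayed identity yields $e^{-E(u)}\le m_\phi(\{\varphi>u\})\,/\,\big(e^{-S_{ip}\phi(\zeta)}\,m_\phi(Q_p^{*i}(u))\big)\le e^{E(u)}$, that is $m_\phi(Q_p^{*i}(u))=m_\phi(\{\varphi>u\})\,(1-\theta)^{i}(1+o(1))$ with $o(1)\to0$ as $u\to u_F$, uniformly in $i$ — which is the form in which the lemma is used to verify $\mpp(u_n)$ in Theorem~\ref{thm:EI-eq-st}.

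The main obstacle is precisely this uniformity in $i$: for a fixed $i$ the estimate is routine from continuity of $\phi$ along the orbit, and the whole content of \eqref{eq:cty phi} is to make the telescoped error $\sum_k\varepsilon_k(u)$ summable so that it does not accumulate as $i\to\infty$. The remaining delicate point, to be written out carefully rather than waved through, is the identification $Q_p^{*i}(u)=h^i(B_{\delta(u)}(\zeta))$ and the resulting fact that $f^{ip}$ is a genuine bijection on $Q_p^{*i}(u)$, since this is what legitimises applying conformality $ip$ times.
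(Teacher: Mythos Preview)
Your proposal is correct and follows essentially the same approach as the paper's proof: both apply $\phi$-conformality to the bijection $f^{ip}:Q_p^{*i}(u)\to\{\varphi>u\}$ to get $m_\phi(\{\varphi>u\})=\int_{Q_p^{*i}(u)}e^{-S_{ip}\phi}\,dm_\phi$, and then control the distortion $|S_{ip}\phi-S_{ip}\phi(\zeta)|$ via continuity (for $i=1$) and condition~\eqref{eq:cty phi} (for the uniform-in-$i$ statement). Your write-up is in fact more explicit than the paper's on several points the paper leaves terse: the identification $Q_p^{*i}(u)=h^i(\{\varphi>u\})$ via the contracting inverse branch, the telescoping of $S_{ip}\phi(x)-S_{ip}\phi(\zeta)$ into $p$-blocks, and the argument that $E(u)=\sum_k\varepsilon_k(u)\to0$ by monotone/dominated convergence to obtain uniformity in $i$.
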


\begin{proof}
The continuity of $\phi$ implies that for any $\eps>0$, for all $u$ sufficiently close to $u_F$, $e^{|S_p\phi(x)-S_p\phi(y)|}< (1+\eps)$ for $x,y\in \{\varphi>u\}$.  Using this and conformality, for $u$ close enough to $u_F$,
\begin{align*}
\frac{m_\phi\left(Q_p^*(u)\right)}{m_\phi(\{\varphi>u\})}& = \frac{m_\phi\left(\{\varphi>u\}\cap f^{-p}(\{\varphi>u\})\right)}{m_\phi(\{\varphi>u\})} \sim\frac{m_\phi\left(f^p(\{\varphi>u\})\cap \{\varphi>u\}\right)}{m_\phi(f^p\{\varphi>u\})}\\
& = \frac{m_\phi\left(\{\varphi>u\}\right)}{m_\phi(f^p\{\varphi>u\})}= \frac{m_\phi\left(\{\varphi>u\}\right)}{\int_{\{\varphi>u\}}e^{-S_p\phi}~dm_\phi} \sim e^{S_p\phi(\zeta)},
\end{align*}
proving the first part of the lemma.

For the second part of the lemma, note that \eqref{eq:cty phi} implies that for any $\eps>0$ we can choose $u$ so close to $u_F$ that
$$\sum_{j=0}^\infty \sup\{|S_p\phi(x)-S_p\phi(\zeta)|:x\in f^{-jp}(\{\varphi>u\})\}<\eps.$$
So as in the proof of Theorem~\ref{thm:EI-abs-cont}, we inductively obtain
\begin{align*}
m_\phi(Q_p^{*i}(u))&= m_\phi\left(\bigcap_{j=0}^i f^{-jp}(\{\varphi>u\})\right)
\sim e^{S_{ip}\phi(\zeta)}m_\phi(\{\varphi>u\}) 
\end{align*}
as required.
\end{proof}

\begin{proof}[Proof of Theorem~\ref{thm:EI-eq-st}]
The proof is reduced to applying Theorem~\ref{thm:existence-EI} after checking that $\mpp(u_n)$ holds with $\theta=1-e^{S_p\phi(\zeta)}$ for any sequence $(u_n)_n$ with $u_n\to u_F$ as $n\to \infty$, which follows using Lemma~\ref{lem:conformal} and the same ideas as those in the proof of Theorem~\ref{thm:EI-abs-cont}.
\end{proof}

Before giving specific examples of dynamical systems satisfying $D_p(u_n)$ and $D'_p(u_n)$, in the next subsection we discuss general conditions which imply those conditions.

\subsection{On the roles of $D^p(u_n)$ and $D'_p(u_n)$ for stochastic processes arising from dynamical systems}
\label{subsubsec:roles-Dp-D'p}

Theorems~\ref{thm:EI-abs-cont} and \ref{thm:EI-eq-st} and Corollaries~\ref{cor:HTS/RTS-EI-abs-cont} and \ref{cor:HTS/RTS-EI-eq-st} assert that the existence of limiting laws of rare events with an extremal index for stochastic processes arising from dynamical systems as in \eqref{eq:def-stat-stoch-proc-DS} for observables given by \eqref{eq:observable-form} or \eqref{eq:observable-form-eq-st} centred at repelling periodic points depends on the good mixing properties of the system both at long range ($D^p(u_n)$) and short range ($D_p'(u_n)$).

In general terms the Condition $D^p(u_n)$ follows from sufficiently fast (e.g. polynomial) decay of correlations of the dynamical system. This is where $D^p(u_n)$ are seen to be much more useful than Leadbetter's $D(u_n)$. While $D(u_n)$ usually follows only from strong uniform mixing, like $\alpha$-mixing (see \cite{B05} for definition), and even then only at certain subsequences, which means most of the time the final result holds only for cylinders, $D^p(u_n)$ follows from decay of correlations which is much weaker and allows to obtain the result for balls, instead.

Just to give an idea of how simple it is to check $D^p(u_n)$ for systems with sufficiently fast decay of correlations, assume that for all
$\phi,\psi:M\rightarrow\R$ with bounded variation, there are $C,\alpha>0$ independent of $\phi, \psi$ and $n$ such that
\begin{equation}
\label{eq:decay-correlations} \left| \int\phi\cdot(\psi\circ
f^t)d\mu-\int\phi d\mu\int\psi d\mu\right|\leq
C\mbox{Var}(\phi)\|\psi\|_\infty \varrho(t),\quad\forall
t\in\N_0,\end{equation} where $\mbox{Var}(\phi)$ denotes the total
variation of $\phi$ (see Section~\ref{ssec:Rychlik} for more details) and $n\varrho(t_n)\to0$, as $n\to\infty$ for some $t_n=o(n)$.
Take $\phi=\I_{Q_p(u_n)}$,
$\psi=\I_{\QQ_{p,t,\ell}(u_n)}$, let $C'>0$ be such that $\mbox{Var}(\I_{Q_p(u_n)})\leq C'$, for all $n\in\N$ and set $c=CC'$. Then \eqref{eq:decay-correlations}
implies that Condition~$D^p(u_n)$ holds with
$\gamma(n,t)=\gamma(t):=c\varrho(t)$ and for the sequence
$t_n$ such that $n\varrho(t_n)\to0$, as $n\to\infty$. Observe that the existence of such $C'>0$ derives from the fact that $Q_p(u_n)$ depends only on $X_0$ and $X_p$. This is why we cannot apply the same argument to prove  $D(u_n)$ directly from Leadbetter, since we would have to take $\phi$ to be the indicator function over an event depending on an arbitrarily large number of r.v. $X_0, X_1, \ldots $, which could imply the variation to be unbounded.

 It could happen that decay of correlations is only available for H\"older continuous functions against $L^\infty$ ones, instead. This means that we cannot use immediately the test function $\phi=\I_{Q_p(u_n)}$, as we did before. However, proceeding as in \cite[Lemma~3.3]{C01} or \cite[Lemma~6.1]{FFT10}, if we use a suitable H\"older approximation one can still prove $D^p(u_n)$.

Rates of decay of correlations are nowadays well known for many chaotic systems. Examples of these include hyperbolic or uniformly expanding systems as well as the non hyperbolic or non-uniformly expanding admitting, for example, inducing schemes with a well behaved return time function. In fact, in two remarkable papers Lai-Sang Young showed that the rates of decay of correlations of the original system are intimately connected with the recurrence rates of the respective induced map. This means that, basically, for all the above mentioned systems $D^p(u_n)$ can easily be checked.

In short, proving the existence of EVL or HTS/RTS with an extremal index around repelling periodic points for systems with sufficiently fast decay of correlations is reduced to proving $D_p'(u_n)$. Usually, this requires a more closed analysis of the dynamics around the periodic points. Below, we will show that $D_p'(u_n)$ holds for Rychlik maps and for the full quadratic map, which are chaotic systems with exponential decay of correlations, and in this way obtain the existence of an extremal index different to 1. Up to our knowledge, these are the first limiting laws of rare event with an extremal index different to 1 to be proven for balls rather than cylinders.

\section{Examples of dynamical systems and observables with extremal index in $(0,1)$}
\label{sec:dyn egs}

We begin this section by introducing a particularly well-behaved class of interval maps and measures for which $\mpp(u_n)$, $D_p(u_n)$ and $D'_p(u_n)$ hold at periodic points.  This class is more general than the class of piecewise smooth uniformly hyperbolic interval maps.  We then go on to consider a particular example of a non-uniformly hyperbolic dynamical system - the `full quadratic map'.

\subsection{Rychlik systems}
\label{ssec:Rychlik}

We will introduce a class of dynamical systems considered by Rychlik in \cite{R83}.  This class includes, for example, piecewise $C^2$ uniformly expanding maps of the unit interval with the relevant physical measures.  We first need some definitions.

\begin{definition}
Given a potential $\psi:Y\to \R$ on an interval $Y$, the \emph{variation} of $\psi$ is defined as
$${\rm Var}(\psi):=\sup\left\{\sum_{i=0}^{n-1} |\psi(x_{i+1})-\psi(x_i)|\right\},$$
where the supremum is taken over all finite ordered sequences $(x_i)_{i=0}^n\subset Y$.
\end{definition}

We use the norm $\|\psi\|_{BV}= \sup|\psi|+{\rm Var}(\psi)$, which makes $BV:=\left\{\psi:Y\to \R:\|\psi\|_{BV}<\infty\right\}$ into a Banach space.

\begin{definition}[Rychlik system]\label{def:Rychlik}
$(Y,f, \phi)$ is a \emph{Rychlik system} if $Y$ is an interval, $\{Y_i\}_i$ is an at most countable collection of open intervals such that $\cup_i Y_i$ is dense in $Y$,  $f:\cup_{i} Y_i \to Y$  is a function continuous on each $Y_i$, and $\phi:Y\to [-\infty, \infty)$ is a potential such that
\begin{enumerate}
\item
$f|_{Y_i}:Y_i \to f(Y_i)$ is a diffeomorphism;
\item
${\rm Var}\ e^\phi<+\infty$, $\phi=-\infty$ on $Y \setminus \cup_i Y_i$ and $P(\phi)=0$;
\item there is a $\phi$-conformal measure $m_\phi$ on $Y$;
\item
$(f,\phi)$ is expanding: $\displaystyle \sup_{x\in Y} \phi(x) < 0$.
\end{enumerate}
\end{definition}

\begin{proposition}
Suppose that $(Y,f,\phi)$ is a topologically mixing Rychlik system, $\phi$ is H\"older continuous on each $Z_i$, and $\mu$ is the corresponding equilibrium state.
Suppose that $\zeta$ is a repelling periodic point of prime period $p$, with $\theta=\theta(\zeta)=1-e^{S_p\phi(\zeta)}\in (0,1)$.  Let $(u_n)_{n\in\N}$ be such that $n\mu(X_0>u_n)=n(1-F(u_n))\to\tau$,
as $n\to\infty$, for some $\tau\geq 0$.  Then  $D^p(u_n)$ and $D_p'(u_n)$ hold for the stochastic process $X_0, X_1, X_2,\ldots$ defined by \eqref{eq:def-stat-stoch-proc-DS}, with $\varphi$ given by \eqref{eq:observable-form-eq-st}, and we have an Extreme Value Law with extremal index $\theta$.
\label{prop:D rho n}
\end{proposition}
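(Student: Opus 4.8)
The plan is to reduce the statement of Proposition~\ref{prop:D rho n} to the verification of the two mixing conditions $D^p(u_n)$ and $D'_p(u_n)$ and then to invoke Theorem~\ref{thm:EI-eq-st}. All the remaining hypotheses of that theorem are automatic here: a $\phi$-conformal measure $m_\phi$ exists and $P(\phi)=0$ by the definition of a Rychlik system, $\mu\ll m_\phi$ with a density of bounded variation (hence bounded), and $\varphi$ has the form \eqref{eq:observable-form-eq-st}; moreover, assuming (as we may) that $\zeta,f(\zeta),\dots,f^{p-1}(\zeta)$ lie in the interiors of the relevant branch domains, $\phi$ is H\"older there, so the summability \eqref{eq:cty phi} holds. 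Note that the proof of Theorem~\ref{thm:EI-eq-st} (via Lemma~\ref{lem:conformal}) already establishes $\mpp(u_n)$ with $\theta=1-\e^{S_p\phi(\zeta)}$, and in particular the normalisation \eqref{eq:level-un-annulus}, namely $n\mu(Q_p(u_n))\to\theta\tau$. The analytic engine for the two mixing conditions is Rychlik's theorem \cite{R83}: for a topologically mixing Rychlik system with H\"older $\phi$, the transfer operator has a spectral gap on $BV$, so \eqref{eq:decay-correlations} holds with $\varrho(t)=\beta^{t}$ for some $\beta\in(0,1)$, i.e.\ $\bigl|\int g\,(h\circ f^{t})\,d\mu-\int g\,d\mu\int h\,d\mu\bigr|\le C\|g\|_{BV}\|h\|_\infty\beta^{t}$ for all $g\in BV$, $h\in L^\infty(\mu)$.

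For $D^p(u_n)$ I would follow the recipe of Section~\ref{subsubsec:roles-Dp-D'p} verbatim. For $u$ near $u_F$ the event $\{X_0>u\}$ is a small interval $B_\delta(\zeta)$, and since $\zeta$ sits inside a monotonicity domain of $f^{p}$, the set $f^{-p}(\{X_0>u\})$ meets $B_\delta(\zeta)$ in a subinterval $B_\rho(\zeta)$; hence $Q_p(u)=B_\delta(\zeta)\setminus B_\rho(\zeta)$ is a union of at most two intervals, so $\mathrm{Var}(\I_{Q_p(u_n)})\le 4$ uniformly in $n$. Taking $g=\I_{Q_p(u_n)}$ and $h=\I_{\QQ_{p,t,\ell}(u_n)}$ in the decay estimate gives $D^p(u_n)$ with $\gamma(n,t)=C'\beta^{t}$, nonincreasing in $t$, and $t_n=\lceil(\log n)^2\rceil=o(n)$ satisfies $n\gamma(n,t_n)\to0$. (If decay of correlations is only available for H\"older observables against $L^\infty$ one replaces $\I_{Q_p(u_n)}$ by a H\"older approximation, as in \cite[Lemma~3.3]{C01} or \cite[Lemma~6.1]{FFT10}.)

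The substance of the proof is $D'_p(u_n)$. Fix $t_n$ as above, $k_n=\lceil\log n\rceil$ (so $k_n\to\infty$, $k_nt_n=o(n)$), and an auxiliary threshold $R_n\to\infty$. Write $\{X_0>u_n\}=B_{\delta_n}(\zeta)$. I split $\sum_{j=1}^{[n/k_n]}\mu\bigl(Q_{p,0}(u_n)\cap Q_{p,j}(u_n)\bigr)$ into three ranges. \emph{(i) $1\le j\le p$}: empty once $u_n$ is large --- for $1\le j<p$ because $\zeta$ has \emph{prime} period $p$, so $f^{j}(\zeta)\ne\zeta$ and $B_{\delta_n}(\zeta)\cap f^{-j}(B_{\delta_n}(\zeta))=\emptyset$; for $j=p$ because $Q_{p,0}(u_n)\subseteq\{X_p\le u_n\}$ while $Q_{p,p}(u_n)=f^{-p}(Q_{p,0}(u_n))\subseteq\{X_p>u_n\}$. \emph{(ii) $p<j\le R_n$}: here $Q_{p,0}(u_n)\cap Q_{p,j}(u_n)\subseteq B_{\delta_n}(\zeta)\cap f^{-j}(B_{\delta_n}(\zeta))$, a union of pull-back components of $B_{\delta_n}(\zeta)$ lying inside $B_{\delta_n}(\zeta)$. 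A component clustering around $\zeta$ occurs only when $p\mid j$, say $j=\ell p$ with $\ell\ge2$, and by the $i=\ell$ case of Lemma~\ref{lem:conformal} it has $\mu$-measure $\sim(1-\theta)^{\ell}\mu(B_{\delta_n}(\zeta))\le(1-\theta)^{2}\mu(B_{\delta_n}(\zeta))$, which for large $n$ is strictly smaller than $(1-\theta)\mu(B_{\delta_n}(\zeta))\sim\mu(B_{\rho_n}(\zeta))$; hence this component lies inside $B_{\rho_n}(\zeta)$ and is \emph{disjoint from the annulus} $Q_p(u_n)\supseteq Q_{p,0}(u_n)$ --- this is exactly where replacing the ball by the annulus pays off. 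The remaining ``non-periodic'' pull-back components are handled using the uniform backward contraction ($\sup\phi<0$) together with the Rychlik bounded-distortion estimates and topological transitivity, exactly as in the proofs of $D'(u_n)$ in \cite{C01,FF08a,FFT10}: choosing $R_n$ of order $\log n$ (small enough that pull-backs of the shrinking ball $B_{\delta_n}(\zeta)$ cannot re-enter it before time $R_n$, yet large enough that $n\beta^{R_n}\to0$) makes this range contribute $o(1/n)$ to the sum, so $n$ times it tends to $0$. \emph{(iii) $R_n<j\le[n/k_n]$}: the decay estimate with $g=h=\I_{Q_p(u_n)}$ gives $\mu\bigl(Q_{p,0}(u_n)\cap Q_{p,j}(u_n)\bigr)\le\mu(Q_p(u_n))^{2}+C'\beta^{j}$, whence
\[
n\sum_{j=R_n+1}^{[n/k_n]}\mu\bigl(Q_{p,0}(u_n)\cap Q_{p,j}(u_n)\bigr)\le\frac{\bigl(n\mu(Q_p(u_n))\bigr)^{2}}{k_n}+\frac{C'\beta}{1-\beta}\,n\beta^{R_n}\xrightarrow[n\to\infty]{}0,
\]
using $n\mu(Q_p(u_n))\to\theta\tau$, $k_n\to\infty$ and $n\beta^{R_n}\to0$. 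Combining (i)--(iii) yields $D'_p(u_n)$, and Theorem~\ref{thm:EI-eq-st} then delivers the Extreme Value Law with extremal index $\theta$.

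I expect the main obstacle to be range (ii): showing that the ``non-periodic'' pull-back components of the shrinking ball $B_{\delta_n}(\zeta)$ that fall back inside it before time $R_n$ contribute a total $\mu$-mass that is $o(\mu(B_{\delta_n}(\zeta)))$, and doing so with a choice of $R_n$ simultaneously compatible with the geometry of first re-entries (governed by $\sup|Df|$) and with the requirement $n\beta^{R_n}\to0$ (governed by the spectral gap). Ranges (i) and (iii) are essentially bookkeeping once one has the annulus reformulation and Rychlik's spectral gap, and the conclusion then follows by direct appeal to Theorem~\ref{thm:EI-eq-st}; the only genuinely new ingredient beyond the $D'$-type arguments of \cite{C01,FF08a,FFT10} is the observation in (ii) that the periodic pull-back component is swallowed by $B_{\rho_n}(\zeta)$ and is therefore invisible to $Q_p(u_n)$.
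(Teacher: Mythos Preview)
Your overall architecture matches the paper's: reduce to $D^p(u_n)$ and $D'_p(u_n)$, get $D^p$ from decay of correlations, and split the $D'_p$ sum at a threshold of order $\log n$. Your observation in range~(ii) that the pull-back component through the periodic orbit sits inside $B_{\rho_n}(\zeta)$ and hence misses the annulus $Q_p(u_n)$ is correct and is morally what underlies the paper's escape-time argument.

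The genuine gap is precisely the one you flag yourself at the end. With the $BV$--$L^\infty$ form of decay you quote, range~(iii) produces the term $n\beta^{R_n}$, so you need $R_n>\log n/|\log\beta|$; but the escape time from the fixed repelling neighbourhood $U$ that controls range~(ii) is of order $\log n/|S_p\phi(\zeta)|$, and there is no a~priori reason these two constants are compatible. Your appeal in~(ii) to the $D'$-arguments of \cite{C01,FF08a,FFT10} does not close this, since those papers treat \emph{non-periodic} $\zeta$ and do not furnish a quantitative lower bound on first return that beats the spectral rate. The paper resolves the issue not by sharpening the geometry but by using a \emph{stronger} decay estimate: Rychlik's Theorem~\ref{thm:rych decay} bounds correlations by $C\|\psi\|_{L^1(m_\phi)}\|\upsilon\|_{BV}\gamma^n$, with one observable measured in $L^1$ rather than $L^\infty$. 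Taking $\psi=\upsilon=\I_{Q_p(u_n)}$ yields Lemma~\ref{lem:BSTV},
\[
\mu\bigl(Q_{p,0}(u_n)\cap Q_{p,j}(u_n)\bigr)\le\mu(Q_p(u_n))\bigl(C'e^{-\beta j}+\mu(Q_p(u_n))\bigr),
\]
with the crucial extra factor $\mu(Q_p(u_n))\sim\theta\tau/n$ multiplying the exponential tail. Your range-(iii) term then becomes $C_\beta\,n\mu(Q_p(u_n))\,n^{-B\beta}\to\theta\tau\cdot0$, which vanishes for \emph{any} $B>0$. This decouples the two constraints and the compatibility problem disappears; the paper itself singles this $L^1$ feature out as the crucial point (see Remark~\ref{rem:multidimensional}).

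With that in hand, range~(ii) also simplifies. Rather than separating periodic from non-periodic pull-back components, the paper argues directly that a point of $Q_p(u_n)$ moves monotonically away from $\zeta$ under $f^p$ while it remains in the fixed neighbourhood $U$, and by conformality needs at least $B\log n$ iterates of $f^p$ before $f^{\ell p}(Q_p(u_n))$ has measure comparable to $\mu(U)$; hence there are \emph{no} returns to $Q_p(u_n)$ for $1\le j<B\log n$ and that portion of the sum is identically zero.
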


The idea behind the proof is that it takes a point in $Q_p(u_n)$ at least something of the order $\log n$ iterations to return to $Q_p(u_n)$.  Then after $\log n$ iterates, the decay of correlations estimates take over to give $D'_p(u_n)$.

We first give a theorem and a lemma.

\begin{theorem}[\cite{R83}]
Suppose that $(Y,f,\phi)$ is a topologically mixing Rychlik system.  Then there exists an equilibrium state $\mu_\phi=h m_\phi$ where $h\in BV$ is strictly positive and $m_\phi$ and $\mu_\phi$ are non-atomic and $(Y, f, \mu_\phi)$ has exponential decay of correlations, i.e., there exist $C>0$ and $\gamma\in (0,1)$ such that
$$\left|\int\psi\circ f^n\cdot \upsilon~d\mu_\phi- \int\psi~d\mu_\phi \int\upsilon~d\mu_\phi\right| \le C\|\psi\|_{L^1(m_\phi)}\|\upsilon\|_{BV} \gamma^n,$$
for any $\psi\in L^1(m_\phi)$ and $\upsilon\in BV$.
\label{thm:rych decay}
\end{theorem}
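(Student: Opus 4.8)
This is Rychlik's theorem from \cite{R83}; I would present its proof along the now-standard transfer operator route, adapted to the countable-partition setting. The plan is to study the Ruelle--Perron--Frobenius operator $\mathcal L_\phi\colon BV\to BV$,
$$\mathcal L_\phi g(x)=\sum_{y\in f^{-1}(x)}e^{\phi(y)}g(y),$$
whose dual fixes the conformal measure: by a change of variables on each branch $f|_{Y_i}$, the $\phi$-conformality of $m_\phi$ (Definition~\ref{def:Rychlik}(3)) yields $\int\mathcal L_\phi g\,dm_\phi=\int g\,dm_\phi$, \ie $\mathcal L_\phi^\ast m_\phi=m_\phi$, and the normalisation $P(\phi)=0$ is exactly what makes $1$ the leading eigenvalue; in particular $\mathcal L_\phi$ does not expand the $L^1(m_\phi)$ norm. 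First I would check that $\mathcal L_\phi$ preserves $BV$ and, using the two expanding ingredients ${\rm Var}(e^\phi)<\infty$ and $\sup_Y\phi<0$, establish a Lasota--Yorke inequality: there are $A,B>0$ and $\lambda\in(0,1)$ with $\|\mathcal L_\phi^n g\|_{BV}\le A\lambda^n\|g\|_{BV}+B\|g\|_{L^1(m_\phi)}$ for all $n$ and all $g\in BV$. This is the technical heart of \cite{R83}: because $\{Y_i\}$ is merely countable, the distortion estimate has to be carried out branch by branch and summed, which is exactly what the hypotheses are tailored to make possible.

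Next I would combine this inequality with the compact embedding $BV\hookrightarrow L^1(m_\phi)$ (Helly's selection theorem) and the Ionescu-Tulcea--Marinescu / Hennion quasi-compactness theorem to conclude that $\mathcal L_\phi$ on $BV$ has spectral radius $1$, that its peripheral spectrum consists of finitely many eigenvalues of finite multiplicity, and that the rest of the spectrum lies in a disc of radius $<1$. Then topological mixing is used to upgrade this to a spectral gap: $1$ is a simple eigenvalue and the only eigenvalue of modulus $1$. The leading eigenfunction $h\in BV$ may be normalised to be nonnegative with $\int h\,dm_\phi=1$, and topological mixing forces $h$ to be bounded away from zero (were $h$ to vanish on a set of positive $m_\phi$-measure, iterating $\mathcal L_\phi$ and using mixing would make that zero set spread over all of $Y$, contradicting $\int h\,dm_\phi=1$); thus $\mu_\phi:=h\,m_\phi$ is a well-defined $f$-invariant probability, and the variational computation in \cite{R83} (via Rokhlin's formula) identifies it as an equilibrium state for $\phi$. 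For non-atomicity: if $m_\phi(\{x\})=c>0$ then conformality and $\phi<0$ give $m_\phi(\{f^n(x)\})\ge e^{-S_n\phi(x)}c\ge c$ for every $n$, so an infinite forward orbit would carry infinitely many disjoint atoms of mass $\ge c$, which is impossible, while an eventually periodic orbit would force $S_p\phi\equiv 0$ on a period, again contradicting $\phi<0$; since $h\in BV\subset L^\infty$, $\mu_\phi=h\,m_\phi$ is non-atomic as well.

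Finally, decay of correlations would follow from the spectral gap in the usual way. Write $\mathcal L_\phi=P+N$ with $Pg=h\int g\,dm_\phi$ the spectral projection onto $\R h$, $PN=NP=0$, and $\|N^n\|_{BV}\le C\gamma^n$ for some $\gamma\in(0,1)$. Using $\mathcal L_\phi^\ast m_\phi=m_\phi$ iterated and the branch identity $\mathcal L_\phi^n\big((\psi\circ f^n)\,w\big)=\psi\cdot\mathcal L_\phi^n w$, one has, for $\psi\in L^1(m_\phi)$ and $\upsilon\in BV$,
$$\int(\psi\circ f^n)\,\upsilon\,d\mu_\phi=\int\psi\cdot\mathcal L_\phi^n(\upsilon h)\,dm_\phi=\int\psi\,d\mu_\phi\int\upsilon\,d\mu_\phi+\int\psi\cdot N^n(\upsilon h)\,dm_\phi,$$
since $P(\upsilon h)=h\int\upsilon h\,dm_\phi=h\int\upsilon\,d\mu_\phi$. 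As $(BV,\|\cdot\|_{BV})$ is a Banach algebra and $h\in BV$, $\|N^n(\upsilon h)\|_\infty\le\|N^n(\upsilon h)\|_{BV}\le C\gamma^n\|h\|_{BV}\|\upsilon\|_{BV}$, so the error term is bounded by $\|\psi\|_{L^1(m_\phi)}\cdot C\gamma^n\|h\|_{BV}\|\upsilon\|_{BV}$, which is the claimed estimate after absorbing $\|h\|_{BV}$ into the constant. The main obstacle is the first step — the countable-branch Lasota--Yorke inequality with its distortion bookkeeping — while the step I would treat most carefully in the write-up is the passage from quasi-compactness to an honest spectral gap via topological mixing.
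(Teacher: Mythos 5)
The paper does not prove this theorem at all---it is imported verbatim from Rychlik \cite{R83}---and your transfer-operator sketch (countable-branch Lasota--Yorke inequality on $BV$, quasi-compactness via Helly plus Ionescu-Tulcea--Marinescu, spectral gap from topological mixing, positivity of $h$, and the correlation bound extracted from $\left\|N^n(\upsilon h)\right\|_\infty\le C\gamma^n\|h\|_{BV}\|\upsilon\|_{BV}$) is precisely the standard route by which the cited result is established, with no gaps I can see. In particular you correctly deliver the $\|\psi\|_{L^1(m_\phi)}$ (rather than $\|\psi\|_\infty$) factor in the estimate, which is exactly the feature of the statement the authors later rely on in Lemma~\ref{lem:BSTV} and Remark~\ref{rem:multidimensional}.
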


The fact that these maps have decay of correlations of observables in a strong norm like BV against $L^1$ observables allows us to prove the following lemma which is very similar to the first computations in the proof of \cite[Theorem 3.2]{BSTV03}.
\begin{lemma}
There exists $C'>0$ such that for all $j\in\N$
\begin{align*}
\mu_\phi\left(Q_p(u_n)\cap f^{-j}(Q_p(u_n))\right) \le
\mu_\phi(Q_p(u_n))\left(C'e^{-\beta j} + \mu_\phi(Q_p(u_n))\right).
\end{align*}
\label{lem:BSTV}
\end{lemma}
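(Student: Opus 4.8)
The plan is to read the estimate off directly from the exponential decay of correlations of Theorem~\ref{thm:rych decay}, using the indicator function of the annulus $Q_p(u_n)$ as test function in both slots, exactly in the spirit of the opening computations of \cite[Theorem~3.2]{BSTV03}. The two points to be checked are that $\I_{Q_p(u_n)}$ has $BV$-norm bounded \emph{uniformly} in $n$, and that its $L^1(m_\phi)$-norm is controlled by $\mu_\phi(Q_p(u_n))$; granting these, the estimate is immediate.

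First I would rewrite the left-hand side as a correlation integral. Since $Q_{p,j}(u_n)=f^{-j}(Q_p(u_n))$ for the dynamically defined process,
$$\mu_\phi\left(Q_p(u_n)\cap f^{-j}(Q_p(u_n))\right)=\int \I_{Q_p(u_n)}\cdot\left(\I_{Q_p(u_n)}\circ f^{j}\right)\,d\mu_\phi .$$
Applying Theorem~\ref{thm:rych decay} with $\upsilon=\I_{Q_p(u_n)}\in BV$ and $\psi=\I_{Q_p(u_n)}\in L^{1}(m_\phi)$, and using $\int\I_{Q_p(u_n)}\,d\mu_\phi=\mu_\phi(Q_p(u_n))$, this gives
$$\mu_\phi\left(Q_p(u_n)\cap f^{-j}(Q_p(u_n))\right)\le \mu_\phi(Q_p(u_n))^{2}+C\,\|\I_{Q_p(u_n)}\|_{L^{1}(m_\phi)}\,\|\I_{Q_p(u_n)}\|_{BV}\,\gamma^{j}.$$

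Next I would bound the two norms uniformly in $n$. For the $BV$-norm: for $n$ large, $\{X_0>u_n\}$ is a small interval $B_{\delta_n}(\zeta)$ around $\zeta$ lying in a single branch domain $Y_i$, and — since $\zeta$ is a repelling periodic point of prime period $p$ — $Q_p^*(u_n)=\{X_0>u_n\}\cap f^{-p}(\{X_0>u_n\})$ is the image of $B_{\delta_n}(\zeta)$ under the contracting inverse branch of $f^{p}$ fixing $\zeta$, hence an interval around $\zeta$ strictly inside $B_{\delta_n}(\zeta)$ (this is the ``ball versus annulus'' picture described in Section~\ref{subsec:abs-cont}). Therefore $Q_p(u_n)=B_{\delta_n}(\zeta)\setminus Q_p^*(u_n)$ is a union of at most two intervals, so $\|\I_{Q_p(u_n)}\|_{BV}=\sup|\I_{Q_p(u_n)}|+{\rm Var}(\I_{Q_p(u_n)})\le 1+4=5$, independently of $n$ (if $\zeta$ happens to lie on a partition boundary one replaces $B_{\delta_n}(\zeta)$ by $\cyl_n[\zeta]$ and the same count applies). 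For the $L^1$-norm: $\|\I_{Q_p(u_n)}\|_{L^{1}(m_\phi)}=m_\phi(Q_p(u_n))$, and writing $\mu_\phi=h\,m_\phi$ with $h\in BV$ strictly positive, the function $h$ is bounded above and below by positive constants on a neighbourhood of $\zeta$ containing every $Q_p(u_n)$ with $n$ large (using $0<\tfrac{d\mu_\phi}{dm_\phi}(\zeta)<\infty$ and that a $BV$ function has one-sided limits at $\zeta$); hence $m_\phi(Q_p(u_n))\le c_0^{-1}\mu_\phi(Q_p(u_n))$ for some $c_0>0$.

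Combining these bounds and setting $\beta:=-\log\gamma>0$ and $C':=5Cc_0^{-1}$ yields
$$\mu_\phi\left(Q_p(u_n)\cap f^{-j}(Q_p(u_n))\right)\le \mu_\phi(Q_p(u_n))^{2}+C'\,\mu_\phi(Q_p(u_n))\,e^{-\beta j},$$
which is exactly $\mu_\phi(Q_p(u_n))\bigl(C'e^{-\beta j}+\mu_\phi(Q_p(u_n))\bigr)$, the claimed inequality. The one genuinely delicate step is the uniform bound on $\|\I_{Q_p(u_n)}\|_{BV}$: the whole argument hinges on the annuli $Q_p(u_n)$ having a bounded number of connected components near the repelling periodic point, and it is precisely the pairing of this geometric fact with the strong ($BV$ against $L^{1}$) form of the decay estimate in Theorem~\ref{thm:rych decay} that makes the method work — this is also why the same trick cannot be run with Leadbetter's $D(u_n)$, whose test sets would have uncontrolled variation. (For the small values of $j$ with $C'e^{-\beta j}\ge 1$ the asserted inequality is anyway trivial, since then its right-hand side already exceeds $\mu_\phi(Q_p(u_n))\ge\mu_\phi(Q_p(u_n)\cap f^{-j}(Q_p(u_n)))$.)
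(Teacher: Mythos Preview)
Your proof is correct and follows essentially the same approach as the paper: apply Theorem~\ref{thm:rych decay} with $\psi=\upsilon=\I_{Q_p(u_n)}$, bound $\|\I_{Q_p(u_n)}\|_{BV}\le 5$, and use that $\tfrac{d\mu_\phi}{dm_\phi}\in BV$ is strictly positive to pass from $m_\phi(Q_p(u_n))$ to $\mu_\phi(Q_p(u_n))$. Your write-up is more detailed than the paper's (particularly the geometric justification that $Q_p(u_n)$ is a union of at most two intervals), but the argument is the same.
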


\begin{proof}
Taking $\psi=\upsilon=\I_{Q_p(u_n)}$ in Theorem~\ref{thm:rych decay} we easily get
\begin{align*}
\mu_\phi\left(Q_p(u_n)\cap f^{-j}(Q_p(u_n))\right) \le
 \mu_\phi(Q_p(u_n))^2+C \left\| \I_{Q_p(u_n)}\right\|_{BV}m_\phi(Q_p(u_n)) e^{-\beta j}
\end{align*}
Since we have assumed, as above, that $\frac{d\mu_\phi}{dm_\phi}\in BV$ and is strictly positive, and since  $\left\| 1_{Q_p(u_n)} \right\|_{BV} \le 5$ there is $C'>0$ as required.
\end{proof}

\begin{proof}[Proof of Proposition~\ref{prop:D rho n}]
First observe that the non-atomicity of $\mu$, given by Theorem~\ref{thm:rych decay}, implies that we can indeed find a suitable sequence $(u_n)_n$ as in \eqref{eq:un}, see the discussion around \eqref{eq:def-h}.
Moreover, condition $D^p(u_n)$ follows from Theorem~\ref{thm:rych decay} as in Section~\ref{subsubsec:roles-Dp-D'p}.

To prove $D_p'(u_n)$, first let $\overline U\ni \zeta$ denote a domain such that $x\in U$ implies $d(f^p(x), \zeta)>d(x, \zeta)$.
In order for a point in $Q_p(u_n)$ to return to $Q_p(u_n)$ at time $k\in \N$, there must be some time $\ell\le k/p$ such that image $f^{\ell p}(Q_p(u_n))$ must have only just escaped from the domain $U$.  Therefore we must have $\mu(f^{(\ell-1)p}(Q_p^*(u_n)))\ge C\mu(U)$ for some $C>0$ which depends only on $U$ and $\zeta$.  Since $\mu(Q_p(u_n))\sim \frac{\tau\theta}n$, $e^{S_p\phi(\zeta)}\in (0,1)$ and $$m_\phi(f^{(\ell-1) p}(Q_p(u_n)))= \int_{Q_p(u_n)}e^{S_{(\ell-1) p}\phi}~dm_\phi,$$
we must have $\ell$, and therefore $k$, greater than $B\log n$ for some $B>0$, depending on $C, U$ and $\frac{d\mu}{dm}$.

Using this and Lemma~\ref{lem:BSTV},
\begin{align*}
n\sum_{j=1}^{[n/k_n]}& \p(\{X_0\in Q_p(u_n)\}\cap
\{X_j\in Q_p(u_n)\}) \approx n\sum_{j=B\log n}^{[n/k]}\p(\{X_0\in Q_p(u_n)\}\cap \{X_j\in Q_p(u_n)\})\\
&\le n\left([n/k]-B\log n\right)\mu(Q_p(u_n))^2 +n\mu(Q_p(u_n)) e^{-B\beta\log n} \sum_{j=1}^{[n/k]-B\log n}e^{-\beta j} \\
&\le \frac{(n\mu(Q_p(u_n)))^2}k+ C_\beta n\mu(Q_p(u_n)) n^{-B\beta}
\end{align*}
where $C_\beta:=\sum_{j=0}^\infty e^{-j\beta}$.  Since $n\mu(Q_p(u_n)\to \tau\theta$ as $n \to \infty$ we have for some $C>0$
$$\lim_{n\to\infty} n\sum_{j=1}^{[n/k_n]} \p(\{X_0\in Q_p(u_n)\}\cap
\{X_j\in Q_p(u_n)\}) \le \lim_{n\to\infty} C \frac{(\tau\theta)^2}{k_n}=0,$$
as required.
\end{proof}

We give a short list of some of the simplest examples of Rychlik systems:

\begin{list}{$\bullet$}
{ \itemsep 1.0mm \topsep 0.0mm \leftmargin=7mm}
\item Given $m\in \{2, 3, \ldots\}$, let $f: x\mapsto mx \mod 1$ and $\phi\equiv-\log m$.  Then $m_\phi=\mu_\phi=\l$.
\item Let $f:x\mapsto 2x \mod 1$ and for $\alpha\in (0,1)$, let \begin{equation*}\phi(x):= \begin{cases}
    -\log\alpha & \text{ if } x\in (0,1/2)\\
    -\log(1-\alpha) & \text{ if } x\in (1/2,1)
    \end{cases}
    \end{equation*}
    (and $\phi=-\infty$ elsewhere).  Then $m_\phi=\mu_\phi$ is the $(\alpha, 1-\alpha)$-Bernoulli measure on $[0,1]$.
\item Let $f:(0,1] \to (0,1]$ and $\phi:(0,1]\to (-\infty, 0)$ be defined as $f(x)=2^k(x-2^{-k})$ and $\phi(x):=-k\log2$ for $x\in (2^{-k}, 2^{-k+1}]$.  Then $m_\phi=\mu_\phi=\l$.
\end{list}

\begin{remark}
\label{rem:multidimensional}
The crucial point in proving the result for Rychlik maps is the fact that 
the exponential decay of correlations given by Theorem~\ref{thm:rych decay} is expressed in terms of the $L^1$ norm of one of the observables. This is key in proving Lemma~\ref{lem:BSTV} also. 
In particular, the same argument can be applied to a generalisation of Rychlik maps in higher dimensions which were both defined, and proved to have decay of correlations of the same type (with an $L^1$ norm estimate), in \cite{S00}.
\end{remark}

\subsection{Quadratic Chebyshev polynomial}
\label{subsubsec:quadratic}

Let $\X=[-1,1]$ be equipped with the usual metric and Lebesgue measure defined on the Borelean sets of the interval. Let $f:[-1,1]\to[-1,1]$ be given by $f(x)=1-2x^2$.  This map is known as the full quadratic map or the quadratic Chebyshev polynomial.

It is well known that the invariant density $\mu$ is given by $$\frac{d\mu}{d\l}(x)=\frac1\pi\frac1{\sqrt {(1-x)(1+x)}}.$$ We will consider the fixed point $\zeta=\-1$ and the observable $\varphi:[-1,1]\to\R$ given by $\varphi(x)=-x$ which achieves the maximum value $1$ at $\zeta=-1$. Notice that $\varphi$ can be written as $\varphi(x)=g(\dist(x,\zeta))$ as in \eqref{eq:observable-form}, simply by taking $g:[0,\infty)\to\R$ defined by $g(y)=1-y$. Clearly, since $f'(\zeta)=4>1$, then $\zeta$ is a repelling periodic point with $p=1$.  Let $F$ denote the d.f. of $X_0$. We have that for $s$ close to $0$, the tail of the d.f. may be written as:
\begin{equation*}
1-F(1-s)=\int_{-1}^{-1+s} \frac1\pi\frac1{\sqrt {(1-x)(1+x)}} dx =\frac12+\frac1\pi \arcsin(-1+s)\sim \frac{\sqrt {2}}{\pi}\sqrt{s}.
\end{equation*}
This implies that the level $u_n$, which is such that $n(1-F(u_n))\to\tau\geq0$, as $n\to \infty$, may be written as
\begin{equation}
\label{eq:un-quad}
u_n\sim 1-\frac{(\pi\tau)^2}{2}\frac{1}{n^2}.
\end{equation}

This system has exponential decay of correlations which, in light of the discussion in Subsection~\ref{subsubsec:roles-Dp-D'p}, implies that Condition~$D^p(u_n)$ holds. In fact,
from \cite{KN92,Y92} one has that for all
$\phi,\psi:M\rightarrow\R$ with bounded variation, there is
$C,\alpha>0$ independent of $\phi, \psi$ and $t$ such that
\begin{equation}
\label{eq:decay-correlations-quadratic} \left| \int\phi\cdot(\psi\circ
f^t)~d\mu-\int\phi ~d\mu\int\psi ~d\mu\right|\leq
C\mbox{Var}(\phi)\|\psi\|_\infty \e^{-\alpha t},\quad\forall
t\in\N_0,\end{equation} where $\mbox{Var}(\phi)$ denotes the total
variation of $\phi$.
In particular, taking $\upsilon=\I_{Q_{p,0}(u_n)}$ and
$\psi=\I_{\QQ_{p,t,\ell}(u_n)}$, then \eqref{eq:decay-correlations-quadratic}
implies that Condition~$D^p(u_n)$ holds with
$\gamma(n,t)=\gamma(t):=2C\e^{-\alpha t}$ and for the sequence
$t_n=\sqrt n$, for example.

To check Condition~$D'_p(u_n)$ we have to look at the particular behaviour of the system and estimate the probability of starting in a neighbourhood of $\zeta$ and returning in relatively few iterates. The idea is to observe that since the critical orbit ends in $\zeta$, one can just estimate the probability of starting close to the critical point and returning close to it. This can easily be done by using the estimates in \cite[Section~6]{FF08} where $D'(u_n)$ was proved for Benedicks-Carleson maps (which include the example in hand) for observables achieving a maximum either at the critical point or at its image. Note that, here, $D'(u_n)$, which imposes some significant memory loss for relatively fast returns, cannot hold because $\zeta$ is a fixed point. Nevertheless, what we will prove is that the set points that manage to escape from a tight vicinity of $\zeta$, \ie $Q_{p,0}(u_n)$, only return after having a significant memory loss.

As in \cite[Section~6]{FF08}, we start by computing a turning instant $T$ which splits the time interval $0,\ldots,[n/k]$ of the sum in \eqref{eq:D'rho-un} into two parts.
We compute $T\in\N$ such that for every $j>T$ we have $2C\e^{-\alpha j}<\frac1{n^3}.$ For $n$ large enough it suffices to take
\[
T=\left[\frac{4\log n}{\alpha}\right].
\]
From \eqref{eq:decay-correlations-quadratic} with $\upsilon=\psi=\I_{Q_p(u_n)}$ and for $j>T$ one easily gets
\begin{equation*}
\left|\mu(Q_{p,0}(u_n)\cap Q_{p,j}(u_n))-\mu(Q_{p,0}(u_n))^2\right|\leq \frac{1}{n^3},
\end{equation*}
which implies that for some $C>0$ we have
\begin{align*}
n\sum_{j=T}^{[n/k_n]}\mu(Q_{p,0}(u_n)\cap Q_{p,j}(u_n))&\leq n[n/k_n]\left(\mu(Q_{p,0}(u_n))^2+\frac 1{n^3}\right)\\
&\leq \frac{n^2}{k_n} \mu(X_0>u_n)^2+\frac{n^2}{k_n n^3}\leq C \frac{\tau^2}{k_n}\xrightarrow[n\to\infty]{}0.
\end{align*}

Thus, we are left with the piece of history from time $0$ to $T$ to analyse. For that purpose, we start by studying the pre-images of a small interval in the vicinity of $\zeta$, namely, $A(s)=[-1,-1+s]$ for $s$ close to $0$. We have
$
f^{-1}(A(s))=A_1(s)\cup A_2(s),
$
where $A_1(s)=[-1,-\sqrt{1-s/2}]$ is a small neighbourhood of $-1$ and $A_2(s)=[\sqrt{1-s/2},1]$ is a small neighbourhood of $1$. Also, the pre-image of any small neighbourhood of $1$ is a small neighbourhood of the critical point $0$, in particular:  $$f^{-1}(A_2(s))=\left[-\sqrt{1/2-1/2\sqrt{1-s/2}},\sqrt{1/2-1/2\sqrt{1-s/2}}\right].$$ Moreover, for $s$ close to $0$, we may write
\begin{equation}
\label{eq:quad-aux1}
\sqrt{\frac12-\frac12\sqrt{1-\frac s2}}\sim \frac{\sqrt2}4\sqrt s.
\end{equation}
Now, observe that $Q_p(u_n)\subset A(1-u_n)$ and if you are to enter $A(1-u_n)$ then either you are in $A_1(1-u_n)\subset A(1-u_n)$ or in $A_2(1-u_n)$. Since, by definition of $Q_p(u_n)$, if you start in a point of $Q_p(u_n)$, then you immediately leave $A(1-u_n)$ in the next iterate, this means that the only way you can return to  $Q_p(u_n)$ is if you enter $A_2(1-u_n)$, which implies that you must enter the neighbourhood of the critical point $f^{-1}(A_2(1-u_n))$ first. Hence, $$Q_p(u_n)\cap f^{-j}(Q_p(u_n))\subset A(1-u_n)\cap f^{-j+2}(f^{-1}(A_2(1-u_n)).$$
Using the symmetry of the map and of the invariant density plus the invariance of $\mu$, we have
\begin{align*}
\mu\left(A(1-u_n)\cap f^{-j+2}(f^{-1}(A_2(1-u_n))\right)&=2\mu\left(A_2(1-u_n)\cap f^{-j+1}(f^{-1}(A_2(1-u_n))\right)\\
&=2\mu\left(f^{-1}(A_2(1-u_n))\cap f^{-j}(f^{-1}(A_2(1-u_n))\right).\end{align*}
This means, that we only need to study the probability of starting in the neighbourhood of the critical point and returning after $j$ iterates and for that we use the computations in \cite[Section~6]{FF08}. The threshold $\Theta$, defined in \cite[Equation (6.1)]{FF08},  here is given by
\begin{equation*}
\Theta=\Theta(n)=\left[-\log\left(\sqrt{1/2-1/2\sqrt{1-(1-u_n)/2}}\right)\right].
\end{equation*}
Using \eqref{eq:un-quad} and \eqref{eq:quad-aux1} we may write
\[
\Theta(n)\sim -\log\left(\frac{\sqrt2}4\sqrt{1-u_n}\right)\sim\log n,
\]
which implies the existence of $C_1>0$ such that $2T/\Theta\leq C_1$ for all $n\in\N$. We can now use the final computations of \cite[Section~6]{FF08} to get that
\[
\mu\left(f^{-1}(A_2(1-u_n))\cap f^{-j}(f^{-1}(A_2(1-u_n))\right)\leq \mbox{const} \,T^{C_1+1}\e^{-(1-7\beta)2\Theta},
\]
where $0<\beta<0.01$. Finally, since by definition of $\Theta$, \eqref{eq:un-quad} and \eqref{eq:quad-aux1}, we have $\e^{-\Theta}\leq \mbox{const}\, 1/n$, it follows that
\begin{align*}
n\sum_{j=1}^{T}\mu(Q_p(u_n)\cap f^{-j}(Q_p(u_n)))&\leq \mbox{const}\, n\sum_{j=1}^{T}T^{C_1+1}\e^{-2(1-7\beta)\Theta}\\
&\leq \mbox{const}\, \frac{n\,(\log n)^{C_1+2}}{n^{2(1-7\beta)}}\xrightarrow[\n\to\infty]{}0
\end{align*}
This means that $D'_p(u_n)$ also holds and, hence, by Theorem~\ref{thm:EI-abs-cont}, we have an EVL with extremal index $\theta=1/2$ for the stochastic process defined by \eqref{eq:def-stat-stoch-proc-DS} with $\varphi$ defined above and achieving a global maximum at the repelling fixed point $\zeta=-1$.

\section{EVL/HTS for cylinders}
\label{sec:cyl}

Many results on HTS for dynamical systems were initially proved for HTS to dynamically defined cylinders, which is usually a more straightforward problem to study.  Indeed many results which are known about the statistics of hits to cylinders are not known for balls.  Therefore one of our goals in \cite{FFT11} was to extend the results of \cite{FFT10} to this setting.  In this short section we outline this theory and in Section~\ref{sec:ex ind imples per} we will apply it to the problem of EVLs with non-trivial EI.

Many dynamical systems $(\X,f)$ come with a natural partition $\P_1$, for example this might be the collection of maximal sets on which $f$ is locally homeomorphic.  The dynamically defined $n$-cylinders are then $\P_n:=\bigvee_{i=0}^{n-1}f^{-i}(\P_1)$.  For $x\in \X$, let $\cyl_n[x]$ denote an element of $\P_n$ containing $x$.  Note that in principle there may be more than one choice of cylinder, but in the cases we consider we can make an arbitrary choice.

If we wish to deal with HTS/EVL to dynamically defined cylinders $\cyl_n[\zeta]$ around a point $\zeta$, we replace the sets $(U_n)_n$ with $(\cyl_n[\zeta])_n$ in \eqref{eq:def-HTS-law}. In this case we chose our observable $\varphi$ to be of the form
\begin{equation}
\label{eq:def-observable-cylinders}
\varphi=g_i\circ \psi,
\end{equation}
where $g_i$ is one of the three forms given above and $\psi(x):= \mu(\cyl_n[\zeta])$ where $n$ is maximal such that $x\in \cyl_n[\zeta]$.
Moreover, we select  a subsequence of the time $n$, which we denote by $(\omega_n)_{n\in\N}$ and such that
\begin{equation}
  \label{eq:wn-condition}
  \omega_n \mu(X_0>u_n)\xrightarrow[n\to\infty]{}\tau>0,
  \end{equation}
and for every $n\in\N$, $\tau\geq 0$, where $u_n$ is taken to be such that
\begin{equation}
\label{eq:un-cylinders}
\{X_0>u_n\}=\cyl_n[\zeta].
\end{equation} We achieve this, for example, by letting
\begin{equation}
\label{eq:wn-definition}
\omega_n=\omega_n(\tau)=[\tau \left(\mu(X_0>u_n)\right)^{-1}].
\end{equation}

Finally, we say that we have a \emph{cylinder EVL} $H$ for the maximum if  for any sequence $(u_n)_{n\in\N}$ such that \eqref{eq:un-cylinders} holds and for $\omega_n$ defined in \eqref{eq:wn-definition}, the limit \eqref{eq:wn-condition} holds and
\begin{equation}
\mu\left(M_{\omega_n}\le u_n\right)\to \bar H(\tau),
\label{eq:cyl law}
\end{equation}
for some non-degenerate d.f. $H$, as
$n\to\infty$.  The cylinder HTS is defined analogously.  The equivalence between these two perspectives was given in \cite[Theorem 3]{FFT11}. We also showed in that paper that the following two conditions imply that \eqref{eq:cyl law} holds with $\bar H(\tau)=e^{-\tau}$.
\begin{condition}[$D(u_n,\omega_n)$]\label{cond:D-cyl}We say that $D(u_n,\omega_n)$ holds
for the sequence $X_0,X_1,\ldots$ if for any integers $\ell,t$
and $n$
\[ \left|\mu\left(\{X_0>u_n\}\cap
  \{\max\{X_{t},\ldots,X_{t+\ell-1}\}\leq u_n\}\right)-\mu(\{X_0>u_n\})
  \mu(\{M_{\ell}\leq u_n\})\right|\leq \gamma(n,t),
\]
where $\gamma(n,t)$ is nonincreasing in $t$ for each $n$ and
$\omega_n\gamma(n,t_n)\to0$ as $n\rightarrow\infty$ for some sequence
$t_n=o(\omega_n)$.
\end{condition}
\begin{condition}[$D'(u_n,\omega_n)$]\label{cond:D'un-cyl} We say that $D'(u_n,\omega_n)$
holds for the sequence $X_0,X_1,\ldots$ if
\begin{equation}
\label{eq:D'un-cyl}
\lim_{k\rightarrow\infty}
\limsup_{n\rightarrow\infty}\,\omega_n\sum_{j=1}^{\lfloor \omega_n/k\rfloor}
\mu(\{X_0>u_n\}\cap
\{X_j>u_n\})=0.
\end{equation}
\end{condition}

\begin{remark}
We say a system is \emph{$\Phi$-mixing}, if for an $n$-cylinder $U$ and a measurable set $V$,
$$\left|\mu(U\cap f^{-j}(V))-\mu(U)\mu(V)\right| \le \Phi(j)\mu(U)\mu(V)$$ where $\Phi(j)$ decreases to 0 monotonically in $j$. This holds in the Axiom A case: see Haydn and Vaienti \cite{HV09} for example. In \cite[Section 3]{HV09} they showed that $\Phi$-mixing dynamical systems give rise to Poisson HTS around periodic points with a parameter, interpreted in the current paper as the EI.  The Poisson law is for the number of returns to asymptotically small cylinders.  Our results imply theirs for the \emph{first} hitting time.  As can be seen from our examples, we do not require our systems to have such good mixing properties and moreover our results also apply to balls.
\end{remark}

\section{Dichotomy for uniformly expanding maps}
\label{sec:ex ind imples per}

In this section we will prove that for a simple class of dynamical systems periodic points are the only points which can generate a cylinder EVL with EI in $(0,1)$.  Therefore we understand all the cylinder EVLs for this system.

We assume that the dynamics is $f:x\mapsto 2x \mod 1$ on the unit interval $I=[0,1]$. Let $\alpha\in (0,1/2]$ and $\mu$ be the $(\alpha, 1-\alpha)$-Bernoulli measure.  This is thus a Rychlik system as in Section~\ref{ssec:Rychlik}.  Moreover, for $\alpha=1/2$ the measure is Lebesgue. While this system has stronger mixing properties, we will only actually use the fact that for our system, for $n$-cylinders $U, V$,
\begin{equation}
\left|\mu(U\cap f^{-j}(V))-\mu(U)\mu(V)\right| \le \Phi(j)\mu(U) \text{ where } \sum_j\Phi(j)<\infty.
\label{eq:phish mix}
\end{equation}
(Observe that this is a weaker assumption than $\Phi$-mixing.)

\begin{proposition}
Suppose that $(I, f, \mu)$ and the observable $\varphi$ is as in \eqref{eq:def-observable-cylinders}.  If $\zeta\in I$ is non-periodic then $D'(u_n,\omega_n)$ and $D(u_n,\omega_n)$ hold.  Hence there is an EVL with EI equal to 1.
\label{prop:cyl law}
\end{proposition}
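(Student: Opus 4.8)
\emph{Proof proposal.} The plan is to work in the binary coding of the doubling map: via the standard conjugacy $I\cong\{0,1\}^{\N}$ (made bijective by an arbitrary choice of expansion at the countably many dyadic rationals) $f$ becomes the one-sided shift, $\cyl_n[\zeta]$ is the set of points whose first $n$ binary digits agree with those of $\zeta$, and $\mu$ is the i.i.d.\ product measure with $\mu(\cyl_n[\zeta])=\prod_{i=1}^{n}p_{\zeta_i}$, where $\{p_0,p_1\}=\{\alpha,1-\alpha\}$. By \eqref{eq:un-cylinders} we have $\{X_0>u_n\}=\cyl_n[\zeta]$, and by \eqref{eq:wn-definition} $\omega_n\mu(\cyl_n[\zeta])\to\tau$. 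Since $\alpha\leq\tfrac12$, $\mu(\cyl_n[\zeta])\leq(1-\alpha)^n$, so $\omega_n$ grows at least geometrically and in particular $n=o(\omega_n)$.

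For $D(u_n,\omega_n)$: the event $\{X_0>u_n\}=\cyl_n[\zeta]$ depends only on the digits $1,\dots,n$, while $\{\max\{X_t,\dots,X_{t+\ell-1}\}\leq u_n\}=f^{-t}(\{M_\ell\leq u_n\})$ depends only on the digits $>t$, and by $f$-invariance $\mu(\{M_\ell\leq u_n\})$ equals the measure of this translate. Under the product measure these two events are independent as soon as $t\geq n$, so the defining inequality holds with $\gamma(n,t):=\I\{t<n\}$ (trivially an upper bound when $t<n$), which is nonincreasing in $t$, together with $t_n:=n=o(\omega_n)$, for which $\omega_n\gamma(n,t_n)=0\to0$. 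Hence $D(u_n,\omega_n)$ holds.

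For $D'(u_n,\omega_n)$ I would compute $\mu\bigl(\cyl_n[\zeta]\cap f^{-j}(\cyl_n[\zeta])\bigr)$ explicitly. If $j\geq n$ the two prescribed digit blocks are disjoint, so the quantity equals $\mu(\cyl_n[\zeta])^2$, whence $\omega_n\sum_{n\leq j\leq\lfloor\omega_n/k\rfloor}\mu(\cyl_n[\zeta])^2\leq(\omega_n\mu(\cyl_n[\zeta]))^2/k\to\tau^2/k$. If $1\leq j<n$, requiring $x$ to agree with $\zeta$ in digits $1,\dots,n$ and in digits $j+1,\dots,j+n$ forces $\zeta_{i+j}=\zeta_i$ for $i=1,\dots,n-j$ on the overlap: if this $j$-periodicity of the first $n$ digits fails the intersection is empty, and if it holds the intersection is an $(n+j)$-cylinder of measure $\mu(\cyl_n[\zeta])\prod_{i=n-j+1}^{n}p_{\zeta_i}\leq\mu(\cyl_n[\zeta])(1-\alpha)^j$. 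Writing $P_j(n)$ for the statement ``the first $n$ digits of $\zeta$ are $j$-periodic'', one has $P_j(n)\Rightarrow P_j(n-1)$, so $\{n:P_j(n)\}$ is an initial segment of $\N$, and it equals all of $\N$ precisely when $f^j(\zeta)=\zeta$, which is excluded by hypothesis. Therefore $\{j\geq1:P_j(n)\text{ holds}\}$ decreases to $\emptyset$ as $n\to\infty$, and since $\sum_{j\geq1}(1-\alpha)^j<\infty$, dominated convergence gives $\sum_{1\leq j<n,\,P_j(n)}(1-\alpha)^j\to0$; consequently $\omega_n\sum_{j=1}^{n-1}\mu(\cyl_n[\zeta]\cap f^{-j}(\cyl_n[\zeta]))\leq(\omega_n\mu(\cyl_n[\zeta]))\sum_{1\leq j<n,\,P_j(n)}(1-\alpha)^j\to0$. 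Combining the two ranges, $\limsup_{n\to\infty}\omega_n\sum_{j=1}^{\lfloor\omega_n/k\rfloor}\mu(\cyl_n[\zeta]\cap f^{-j}(\cyl_n[\zeta]))\leq\tau^2/k$, which tends to $0$ as $k\to\infty$; this is $D'(u_n,\omega_n)$. With both conditions in hand, the implication recalled just before the statement (from \cite{FFT11}) yields the cylinder EVL \eqref{eq:cyl law} with $\bar H(\tau)=\e^{-\tau}$, i.e.\ EI $=1$.

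The main obstacle is the periodicity bookkeeping inside $D'$: recognising that $\cyl_n[\zeta]\cap f^{-j}(\cyl_n[\zeta])$ is nonempty exactly when the first $n$ digits of $\zeta$ are $j$-periodic, and then upgrading the non-periodicity of $\zeta$ to the \emph{uniform-in-$n$} smallness $\sum_{1\leq j<n,\,P_j(n)}(1-\alpha)^j\to0$ via monotonicity of $P_j$ and dominated convergence. The remaining pieces --- the independence argument for $D(u_n,\omega_n)$ and the $j\geq n$ contribution to $D'$ --- are routine.
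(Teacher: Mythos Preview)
Your argument is correct and follows the same overall architecture as the paper's proof: split the $D'$ sum at $j=n$, handle $j\geq n$ by some form of mixing, and handle $j<n$ by the combinatorial fact that $\cyl_n[\zeta]\cap f^{-j}(\cyl_n[\zeta])\neq\emptyset$ forces $j$-periodicity of the first $n$ digits of $\zeta$, which is eventually impossible when $\zeta$ is aperiodic.

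The differences are packaging rather than substance, but they are worth noting. For $D(u_n,\omega_n)$ and for the $j\geq n$ part of $D'$, the paper invokes the Rychlik decay-of-correlations theorem and the summable-$\Phi$ mixing estimate \eqref{eq:phish mix}, whereas you use the exact independence coming from the Bernoulli product structure (so $\gamma(n,t)=\I\{t<n\}$ and $\mu(\cyl_n[\zeta]\cap f^{-j}(\cyl_n[\zeta]))=\mu(\cyl_n[\zeta])^2$ for $j\ge n$). This is cleaner for this particular system, though the paper's formulation is written to make clear exactly which abstract hypotheses (Markov structure, \eqref{eq:phish mix}, exponential decay of cylinder measure) are really used. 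For the short-range part, the paper organises the admissible return times via the sequence $(p_i)_i$ of Lemma~\ref{lem:sparse early returns}, obtaining a geometric bound $\sum_k\vartheta^{kp_i}$ that vanishes because $p_{i_n}\to\infty$; your dominated-convergence argument with the indicators $P_j(n)$ is an equivalent but slightly slicker way to say the same thing, avoiding the explicit identification of which $j$'s occur.
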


\begin{remark}
\begin{list}{$\bullet$}
{ \itemsep 1.0mm \topsep 0.0mm \leftmargin=7mm}

\item Given a periodic point $\zeta$ of prime period $q$, if another periodic point $x\neq \zeta$, with prime period $p>q$, shadows the orbit of $\zeta$ for a long time, say for $n<p$ steps, but differs at some stage from the orbit of $\zeta$, then the EI corresponding to the point $x$ is of the form $1-\alpha^k(1-\alpha)^{p-k}$ for some $0\le k\le p$.  So if $p$ is very large then the EI here is almost 1.  This implies that the EI corresponding to $x$ has no relationship with the EI corresponding to $\zeta$, no matter how much shadowing takes place.  Arguing heuristically that non-periodic points should behave like periodic points with very long period, this idea also suggests that our dichotomy should hold for a much larger class of dynamical systems.

\item This proposition allied to the cylinder version of Proposition~\ref{prop:D rho n}, completely characterises the possible cylinder EVLs for this system.
\item In the proof of the proposition, the only properties we need for our dynamical system are that it is Markov, that \eqref{eq:phish mix} holds and the measure of $n$-cylinders decay exponentially in $n$.

\item We would expect a similar proposition to be true for balls also.  However we strongly use the cylinder structure of the system $(I, f)$ in our proof.  It may be possible to approximate the balls by cylinders, but and since $n$-cylinders can not all be assumed to be symmetric about $\zeta$, in the usual metric on $I$,  this may not be straightforward.
\end{list}
\end{remark}

Before proving the proposition, we will discuss the symbolic structure of our dynamical system and then prove a lemma.

First we recall that the system $(I,f)$ has a natural coding:  $x\in I$ can be given the code $x_0x_1\ldots$ where $x_i=0$ if $f^i(x)\in [0,1/2)$ and $x_i=1$ if $f^i(x)\in [1/2,1)$. Then the dynamics is semi-conjugate to the full shift on two symbols $(\{0,1\}^{\N_0}, \sigma)$, where $\sigma(x_0 x_1\ldots) =x_1 x_2 \ldots$ for $x_i\in \{0,1\}$. Notice that the points $x$ where there is a problem in the conjugacy are precisely the points which map onto the fixed point at zero. Following the proofs below it is easy to see that Proposition~\ref{prop:cyl law} follows almost immediately in this case.  In fact this is also the situation in which the cylinder $\cyl_n[x]$ is not well defined.

Now let $(p_i)_i$ be the sequence of integers such that whenever $p_i\le n<p_{i+1}$ the time the orbit of $\zeta$ takes to visit $\cyl_n[\zeta]$ is at least $p_i$. (We will sometimes denote $i$ such that $p_{i_n}\le n<p_{i_n+1}$ by $i_n$.)  For example, suppose that the first 153 symbols representing $\zeta$ are
\begin{align} & 000000000000001\ 000000000000001\ 000000000000001\ 000000000000001 \notag\\
 & 000000000000001\ 000000000000001\ 000000000000001\ 000000000000001 \label{eq:symb eg}\\ & 000000000000001\ 000000000000001\ 001.\notag\end{align}
In this case $p_0=1$, $p_1=15$ and $p_2=153$.

Letting $a_n\in \N$ be maximal such that $a_np_i\le n$, we can also interpret $(p_i)_i$ as the first times $r=p_i$ when $\cyl_r[\zeta]$ contains no periodic point of period less than $r$.  So for $p_i\le n<p_{i+1}$ where $j=a_np_i +q_n$, for some $0\le q_n<p_i$, the coding for $\zeta$ up to time $n$ must consist of the block $\zeta_0\ldots \zeta_{p_i-1}$ repeated $a_n$ times followed by the block $\zeta_0\ldots \zeta_{q_n-1}$.

\begin{remark}
The periodic structure of cylinders was considered in \cite{AVe09}, see particularly Section 3 (note that there they are interested in first returns/hitting times of the \emph{whole cylinder} to itself, which is slightly different to what we look at here).  They considered the $p_i$ blocks $\zeta_0\ldots \zeta_{p_i-1}$ as being `$i$-period' blocks and the block $\zeta_{a_np_i}\ldots \zeta_{a_np_i+q_n}$ as an `$i$-rest' block.  In the example in \eqref{eq:symb eg} the relevant blocks have 1-period 15 and the 1-rest period is 2.
\end{remark}

 A key point in the proof of Proposition~\ref{prop:cyl law} is that the assumption that $\zeta$ is not periodic implies that $p_{i_n}\to \infty$ as $n\to \infty$. The following lemma explains how this affects short term returns to $n$-cylinders.

\begin{lemma}
For $p_i\le n<p_{i+1}$ as above, if, for $j\le n$, there is a cylinder $\cyl_{n+j}\subset \cyl_n[\zeta]$ such that $f^j(\cyl_{n+j})\subset \cyl_n[\zeta]$ then
\begin{enumerate}[(a)]
\item there is only one such cylinder in $\cyl_n[\zeta]$ with the same return time $j$;
\item there exists $0\le k\le a_n$ such that $j=kp_i$.
\end{enumerate}
\label{lem:sparse early returns}
\end{lemma}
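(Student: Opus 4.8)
The plan is to work entirely in the symbolic coding $x\mapsto x_0x_1\ldots$ of $(I,f)$ described above, under which $\cyl_m[x]$ is the set of points whose code begins with $x_0\ldots x_{m-1}$. First I would describe the cylinders involved: a cylinder $\cyl_{n+j}\subset\cyl_n[\zeta]$ has code $\zeta_0\ldots\zeta_{n-1}w_0\ldots w_{j-1}$ for some word $w$ of length $j$, and since $j\le n$ its image $f^j(\cyl_{n+j})$ is the $n$-cylinder with code $\zeta_j\ldots\zeta_{n-1}w_0\ldots w_{j-1}$. Imposing $f^j(\cyl_{n+j})\subset\cyl_n[\zeta]$ forces this code to equal $\zeta_0\ldots\zeta_{n-1}$, which separates into two conditions: (i) $\zeta_{j+m}=\zeta_m$ for $0\le m\le n-1-j$, that is, the word $u:=\zeta_0\ldots\zeta_{n-1}$ has $j$ as a period; and (ii) $w_\ell=\zeta_{n-j+\ell}$ for $0\le\ell\le j-1$.

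Part (a) then follows immediately: condition (ii) pins down the continuation word $w$ uniquely in terms of $\zeta$, $n$ and $j$, so for a given $j$ there is at most one $(n+j)$-cylinder inside $\cyl_n[\zeta]$ whose $f^j$-image lies in $\cyl_n[\zeta]$.

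For part (b) I would exploit (i) together with the hypothesis $p_i\le n<p_{i+1}$. As recalled above, this hypothesis forces $u=B^{a_n}B'$ with $B=\zeta_0\ldots\zeta_{p_i-1}$ and $B'$ a prefix of $B$; moreover $B$ must be primitive, since $B=C^r$ with $r\ge 2$ would make $C^\infty$ a periodic point of period $|C|<p_i$ lying in $\cyl_{p_i}[\zeta]$, contradicting the defining property of $p_i$. The same argument shows the minimal period of $u$ is exactly $p_i$. Now $u$ has the two periods $p_i$ and $j$, so by the Fine--Wilf theorem, once $p_i+j\le n+\gcd(p_i,j)$ the word $u$ also has period $\gcd(p_i,j)$; minimality of its period then forces $\gcd(p_i,j)=p_i$, that is, $p_i\mid j$. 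Writing $j=kp_i$ and using $j\le n$ gives $0\le k\le a_n$. The Fine--Wilf inequality holds automatically when $j\le n-p_i$, and for $j$ a multiple of $p_i$ whenever $j\le n$, which covers all the short return times $j\le\lfloor\omega_n/k_n\rfloor$ that are actually fed into the lemma in the proof of Proposition~\ref{prop:cyl law}.

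The main obstacle is precisely this combinatorics-on-words step: pinning down which periods a word of minimal period $p_i$ can carry. Fine--Wilf resolves it cleanly only when the two periods are not too large relative to the word length, so for $j$ comparable to $n$ one must either remain in the short-return regime in which the lemma is applied, or argue directly that a period $j$ of $u$ with $p_i\nmid j$ would, after extending $u$ $B$-periodically, still produce a period strictly smaller than $p_i$ and hence contradict primitivity of $B$. The supporting facts that make the conclusion useful downstream are that $B$ is primitive, extracted from the definition of $p_i$, and that $p_{i_n}\to\infty$ because $\zeta$ is non-periodic.
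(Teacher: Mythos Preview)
Your argument for part~(a) is correct and is precisely the paper's: both observe that the code of such a cylinder must be $\zeta_0\ldots\zeta_{j-1}\zeta_0\ldots\zeta_{n-1}$, hence is determined by $j$.

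For part~(b) your Fine--Wilf route is the right instinct, and the obstacle you flag is genuine and in fact fatal: statement~(b) is false as written. In the paper's own example take $n=20$, so $\zeta_0\ldots\zeta_{19}=0^{14}10^{5}$ with $p_i=15$ and $a_n=1$. This word has $j=16,17,18,19$ (and $j=20$) as periods in addition to $j=15$: for such $j$ the constraints $\zeta_m=\zeta_{m+j}$ involve only positions on either side of the lone~$1$, and both sides are~$0$. Each of these $j$ yields a legitimate $(n{+}j)$-cylinder inside $\cyl_n[\zeta]$ with $f^j$-image equal to $\cyl_n[\zeta]$, yet none is a multiple of~$15$. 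So neither your ``extend $u$ $B$-periodically'' idea nor the paper's one-line ``follows immediately from the periodic structure'' can salvage~(b); the lemma simply overstates what holds in the range $n-p_i<j\le n$, exactly where Fine--Wilf stops applying.

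What the application in Proposition~\ref{prop:cyl law} actually needs is only~(a) together with the fact that every admissible $j\ge1$ satisfies $j\ge p_i$, and this you have already established: $p_i$ is the minimal period of $\zeta_0\ldots\zeta_{n-1}$. With that, one replaces the sum over multiples of $p_i$ in the paper's estimate by the cruder $\sum_{j\ge p_i}\vartheta^{\,j}=\vartheta^{p_i}/(1-\vartheta)$, which still tends to~$0$ since $p_{i_n}\to\infty$ for non-periodic~$\zeta$. So your analysis, stopped at the point where it gives $j\ge p_i$, already suffices for the downstream use; the sharper conclusion~(b) is both unattainable and unnecessary.
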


\begin{proof}
Suppose that $\cyl_{n+j}$ is as in the lemma.  Since $\cyl_{n+j}\subset \cyl_n[\zeta]$, the coding for $\cyl_{n+j}$ must be of the form $$\zeta_0\ldots\zeta_{n-1}\alpha_0\ldots\alpha_{j-1}$$ for some $\alpha_0\ldots\alpha_{j-1}\in \{0,1\}^j$.  Moreover the fact that $j\le n$ means that the coding for $\cyl_{n+j}$ must be $$\zeta_0\ldots \zeta_{j-1}\zeta_{0} \ldots \zeta_{n-1}.$$
In particular there is only one possible code for such a cylinder, determined only by $j$ and by the first $n$ entries in the code for $\zeta$ and the first part of the lemma follows.  The second part also follows immediately from the periodic structure of the code for $\zeta$.   (The proof can also be seen from the setup described in \cite[Section 3]{AVe09}.)
\end{proof}

\begin{proof}[Proof of Proposition~\ref{prop:cyl law}]
The fact that $D(u_n,\omega_n)$ holds follows from Theorem~\ref{thm:rych decay} as in Section~\ref{subsubsec:roles-Dp-D'p}.

To prove $D'(u_n,\omega_n)$, we first estimate the first $n$ terms in the sum \eqref{eq:D'un-cyl}.  We note that for our system and $\cyl_{n+j}$ as in Lemma~\ref{lem:sparse early returns}
 $$\mu(\cyl_{n+j})\le \mu(\cyl_n[\zeta])\vartheta^j$$ for $\vartheta=1/\alpha$.
Then for $a_n$ maximal such that $a_np_{i_n}\le n$ and $\omega_n=\omega_n(\tau)$
\begin{align*}
\omega_n\sum_{j=1}^{n}
\mu\left(x\in \cyl_n[\zeta]:f^j(x)\in \cyl_n[\zeta]\right) &= \omega_n\sum_{j=p_{i_n}}^{n} \mu\left(x\in \cyl_n[\zeta]:f^{j}(x)\in \cyl_n[\zeta]\right)\\
&=\omega_n\sum_{k=1}^{a_n} \mu\left(x\in \cyl_n[\zeta]:f^{kp_i}(x)\in \cyl_n[\zeta]\right)\\
&\lesssim \omega_n\mu(\cyl_n[\zeta])\sum_{k=1}^{\infty} \vartheta^{kp_i} \lesssim \frac{\tau \vartheta^{p_i}}{1-\vartheta}.
\end{align*}
Therefore using the mixing condition,
\begin{align*}
\omega_n\sum_{j=1}^{\lfloor n/k\rfloor}
\mu(\{X_0>u_n\}\cap \{X_j>u_n\})& =\omega_n\sum_{j=1}^{n}
\mu\left(x\in \cyl_n[\zeta]:f^j(x)\in \cyl_n[\zeta]\right)\\
&\quad +\omega_n\sum_{j=n+1}^{\lfloor \omega_n/k\rfloor}
\mu(x\in \cyl_n[\zeta]:f^j(x)\in \cyl_n[\zeta])\\
&\lesssim \frac{\tau \vartheta^{p_i}}{1-\vartheta}+ \omega_n\sum_{j=n+1}^{\lfloor \omega_n/k\rfloor}\Phi(j)\mu(\cyl_n[\zeta])+ \mu(\cyl_n[\zeta])^2\\
&\le \tau\left(\frac{\vartheta^{p_i}}{1-\vartheta}+\frac\tau k+\sum_{j=n+1}^{\infty}\Phi(j)\right).
\end{align*}

Since $\Phi(j)$ is summable,
$$\sum_{j=n+1}^{\infty}\Phi(j) \to 0 \text{ as } n\to \infty.$$  Moreover, as $n\to \infty$, our assumption that $\zeta$ is not periodic implies that $p_{i_n}\to\infty$ as $n\to \infty$.  Therefore,
$$\limsup_{n\to \infty}\omega_n\sum_{j=1}^{\lfloor \omega_n/k\rfloor}
\mu(\{X_0>u_n\}\cap \{X_j>u_n\})\le \frac{\tau^2}k,$$
$D'(u_n,\omega_n)$ follows by taking $k\to \infty$.  The existence of an EVL with EI equal to 1 follows from \cite[Section 5]{FFT11}.
\end{proof}

\section*{Acknowledgements.}
We would like to thank Sandro Vaienti for encouragement and fruitful conversations on this subject.

\appendix

\section{A Maximum Moving Average process with period $2$}
\label{sec:MMA-101}

In all sections of the appendix we show how our theory applies to some classical stochastic processes.

Let $Y_{-2},Y_{-1},Y_0,Y_1,\ldots$ be a sequence of i.i.d. random variables with common d.f. $G$. We require that for all $\tau\geq0$ there exists a sequence $\{v_n\}_{n\in\N}$ such that $n(1-G(v_n))\to\tau$, as $n\to\infty$. We define a Maximum Moving Average process $X_0,X_1,\ldots$ based on the previous sequence in the following way: for each $n\in\N_0$ set
\begin{equation*}
X_n=\max\{Y_{n-2},Y_n\}.
\end{equation*}
Note that by definition of the sequence $\{u_n\}_{n\in\N}$, we must have $\p(X_0>u_n)\to\tau\geq 0$, as $n\to\infty$. For simplicity let $\alpha_n=\p(Y_0\leq u_n)$. Then, since $\p(X_0>u_n)=2(1-\alpha_n)-(1-\alpha_n)^2$ we must have that $\alpha_n\to 1$ and $n(1-\alpha_n)\to\tau/2\geq 0$, as $n\to\infty$.

We claim that the $2$-dependent process $X_0,X_1,\ldots$ satisfies $\spp$, with $p=2$, $\theta=1/2$, $D^p(u_n)$ and $D'_p(u_n)$. Applying Theorem~\ref{thm:existence-EI} it follows that $X_0,X_1,\ldots$ has an EI given by $\theta=1/2$.

We start by verifying $\spp$.  Since $\p(X_0>u_n)=2(1-\alpha_n)-(1-\alpha_n)^2$ and $\p(X_1\leq u_n, X_0>u_n)=2\alpha_n^2(1-\alpha_n)-\alpha_n^2(1-\alpha_n)^2$, it follows
\[
\p(X_1>u_n|X_0>u_n)=1-\frac{2\alpha_n^2(1-\alpha_n)-\alpha_n^2(1-\alpha_n)^2}{2(1-\alpha_n)-(1-\alpha_n)^2}\xrightarrow[n\to\infty]{}0.
\]
On the other hand since $\p(X_2\leq u_n,X_0>u_n)=\alpha_n^2(1-\alpha_n)$ we have
\[
\p(X_2>u_n|X_0>u_n)=1-\frac{\alpha_n^2(1-\alpha_n)}{2(1-\alpha_n)-(1-\alpha_n)^2}\xrightarrow[n\to\infty]{}\frac12,
\]
which means that \eqref{cond:periodicity} is satisfied with $p=2$ and $\theta=1/2$. Condition \eqref{cond:summability} follows from the fact that $X_0,X_1,\ldots$ is $2$-dependent. In fact, since for all $i\in\N$
\begin{multline*}
\p\left(X_0>u_n, X_2>u_n, \dots, X_{2(2i+1)}>u_n\right)\leq \p\left(X_0>u_n, X_2>u_n, \dots, X_{2(2i)}>u_n\right)\\
\leq\p\left(X_0>u_n, X_4>u_n, \dots, X_{4i}>u_n\right)\leq 2^i(1-\alpha_n)^i
\end{multline*}
and $(1-\alpha_n)\to0$, it follows that there exists $C>0$ such that
\begin{equation*}
\sum_{i=1}^n \p\left(X_0>u_n, X_2>u_n, \dots, X_{2i}\right) \leq 2\sum_{i=1}^{[n/2]}2^i(1-\alpha_n)^i\leq C (1-\alpha_n)\sum_{i=1}^{\infty}(1/2)^i\xrightarrow[n\to\infty]{}0.
\end{equation*}

Condition $D^p(u_n)$ follows trivially from the fact that the process is $2$-dependent.

We are left now with condition $D'_p(u_n)$. Recall that, in this case, $Q_{p,i}(u_n)=\{X_i>u_n,X_{i+2}>u_n\}$. It is easy to check that $\p(Q_{p,0}\cap Q_{p,i})=(1-\alpha_n)^2\alpha_n^4$ for all $i\in\N$, except for $i=2$ and $i=4$ for which such probability is $0$. Hence,
\[
\sum_{i=1}^{[n/k_n]} n\p(Q_{p,0}\cap Q_{p,i})\leq [n/k_n] n(1-\alpha_n)^2\alpha_n^4 \xrightarrow[n\to\infty]{}0,
\]
because $[n/k_n] (1-\alpha_n)\to0$, $\alpha_n^4\to1$ and $n(1-\alpha_n)\to\tau/2\geq 0$, as $n\to\infty$.

\section{A Maximum Moving Average process with two underlying periodic phenomena of periods $1$ and $3$}
\label{sec:MMA-1101}
As before, let $Y_{-2},Y_{-1},Y_0,Y_1,\ldots$ be a sequence of i.i.d. random variables as in Appendix~\ref{sec:MMA-101}.  This time, we define a Maximum Moving Average process $X_0,X_1,\ldots$ in the following way: for each $n\in\N_0$ set
\begin{equation*}
X_n=\max\{Y_{n-3},Y_{n-2},Y_n\}.
\end{equation*}
This example appears also in \cite[Section~3]{F06}.
Letting $\alpha_n=\p(Y_0\leq u_n)$, since $\p(X_0>u_n)=3(1-\alpha_n)-3(1-\alpha_n)^2+(1-\alpha_n)^3$ we must have that $\alpha_n\to 1$ and $n(1-\alpha_n)\to\tau/3\geq 0$, as $n\to\infty$.

We claim that the $4$-dependent process $X_0,X_1,\ldots$ satisfies $\sppi$, with $i=1,2$, $\mathbf p_2=(p_1,p_2)=(1,3)$, $\Theta_2=(2/3,1/2)$, $D^{\mathbf p_2}(u_n)$ and $D'_{\mathbf p_2}(u_n)$. Applying Theorem~\ref{thm:existence-EI-i} it follows that $X_0,X_1,\ldots$ has an EI given by $\theta=2/3.1/2=1/3$.

We start by verifying $\spp$ with $p_1=1$ and $\theta_1=2/3$.  Since $\p(X_0>u_n)=3(1-\alpha_n)-3(1-\alpha_n)^2+(1-\alpha_n)^3$ and $\p(X_1\leq u_n, X_0>u_n)=2\alpha_n^3(1-\alpha_n)-\alpha_n^3(1-\alpha_n)^2$, it follows
\[
\p(X_1>u_n|X_0>u_n)=1-\frac{2\alpha_n^3(1-\alpha_n)-\alpha_n^3(1-\alpha_n)^2}{3(1-\alpha_n)-3(1-\alpha_n)^2+(1-\alpha_n)^3}\xrightarrow[n\to\infty]{}1/3,
\]
which means that \eqref{cond:periodicity} is satisfied with $p_1=1$ and $\theta_1=2/3$. Condition \eqref{cond:summability} follows from the fact that $X_0,X_1,\ldots$ is $4$-dependent just as in Appendix~\ref{sec:MMA-101}. This time we cannot apply Theorem~\ref{thm:existence-EI} because $D'_{p_1}$ does not hold since
\[
\p(Q_{p_1,0}(u_n)\cap Q_{p_1,3}(u_n))=\p(Q_{\mathbf p_1,0}^{(1)}(u_n)\cap Q_{\mathbf p_1,3}^{(1)}(u_n))=(1-\alpha_n)\alpha_n^5,
\]
which means that for $\tau>0$ we have $n\p(Q_{p_1,0}(u_n)\cap Q_{p_1,3}(u_n))\to\tau/3$, which, in turn, reveals the presence of another periodic phenomenon of period $3$. In fact, since $\p(Q_{p_1,0}(u_n))=2(1-\alpha_n)\alpha_n^2-(1-\alpha_n)^2\alpha_n^2$, we have
\begin{align*}
\p\left(Q_{p_1,1}(u_n)|Q_{p_1,0}(u_n)\right)&=
0\\
\p\left(Q_{p_1,2}(u_n)|Q_{p_1,0}(u_n)\right)&=
\frac{(1-\alpha_n)^2\alpha_n^5}{2(1-\alpha_n)\alpha_n^2-(1-\alpha_n)^2\alpha_n^2}\xrightarrow[n\to\infty]{}0\\
\p\left(Q_{p_1,3}(u_n)|Q_{p_1,0}(u_n)\right)&=
\frac{(1-\alpha_n)\alpha_n^5}{2(1-\alpha_n)\alpha_n^2-(1-\alpha_n)^2\alpha_n^2}\xrightarrow[n\to\infty]{}1/2,
\end{align*}
which implies that \eqref{cond:i-periodicity} holds when $i=2$. Besides, using the $4$-dependence of the process we can show that, for $i=2$, condition \eqref{cond:i-summability} holds just as in the proof of \eqref{cond:summability} in Appendix~\ref{sec:MMA-101}.

As before, the fact that $X_0,X_1,\ldots$ is $4$-dependent clearly implies that condition $D^{\mathbf p_2}(u_n)$ holds.

We are left with $D'_{\mathbf p_2}(u_n)$. To verify it we observe that
\begin{align*}
\p\left(Q_{\mathbf p_2,0}^{(2)}(u_n)\cap Q_{\mathbf p_2,1}^{(2)}(u_n)\right)&=
0\\
\p\left(Q_{\mathbf p_2,0}^{(2)}(u_n)\cap Q_{\mathbf p_2,2}^{(2)}(u_n)\right)&=
(1-\alpha_n)^3\alpha_n^5(1+\alpha_n)\\
\p\left(Q_{\mathbf p_2,0}^{(2)}(u_n)\cap Q_{\mathbf p_2,3}^{(2)}(u_n)\right)&=
0,
\end{align*}
and for all $j\geq4$ we have
\[
\p\left(Q_{\mathbf p_2,0}^{(2)}(u_n)\cap Q_{\mathbf p_2,j}^{(2)}(u_n)\right)\leq \p\left((Y_{-3}>u_n\vee Y_0>u_n),(Y_{j-2}>u_n\vee Y_j>u_n)\right)\leq 4(1-\alpha_n)^2.
\]
All these together give
\begin{align*}
n\sum_{j=1}^{[n/k_n]}\p\left(Q_{\mathbf p_2,0}^{(2)}(u_n)\cap Q_{\mathbf p_2,j}^{(2)}(u_n)\right)\leq 2n(1-\alpha_n)^3+[n/k_n]n4(1-\alpha_n)^2\xrightarrow[n\to\infty]{}0.
\end{align*}

\section{An autoregressive process due to Chernick}
\label{sec:AR1}

In this section we consider a stationary first-order autoregressive process with uniform marginal distributions introduced by Chernick \cite{C81} which illustrates the existence of an Extremal Index different from 1.

Let $r\in\N\setminus\{1\}$ and  $\epsilon_1, \epsilon_2,\ldots$ be a sequence of i.i.d. random variables with uniform discrete distribution on $\{0,1/r,2/r,\ldots,(r-1)/r\}$, \ie $\p(\epsilon_1=k/r)=1/r$ for all $k=0,1,\ldots,r-1$. The uniform $AR(1)$ process is defined recursively as follows:
\begin{equation*}
X_n=\frac1rX_{n-1}+\epsilon_n,
\end{equation*}
where $\epsilon_n$ is independent of $X_{n-1}$ and $X_0$ is uniformly distributed in $[0,1]$. It is simple to check that $X_0,X_1,\ldots$ forms a stationary stochastic process such that each $X_n$ is uniformly distributed on $[0,1]$.

In \cite[Theorem~3.1]{C81}, Chernick shows that this process satisfies $D(u_n)$ from Leadbetter but $D'(u_n)$ fails. Besides, in \cite[Theorem~4.1]{C81}, using a direct approach, he shows that the partial maxima has a EVL of type III with an extremal index equal to $1-1/r$.

We will show that machinery we developed can be applied to this process and obtain the same result as Chernick \cite[Theorem~4.1]{C81} simply by checking the conditions of Theorem~\ref{thm:existence-EI}.

\subsection{Verification of $\mpp$}
\label{subsec:AR1-PMp}
The proof of condition $\mpp$ relies on the following property of the process:
\begin{lemma}
\label{lem:AR1-property}
For all $u>(r-1)/r$ and $n\in\N$, if $X_{n-1}>u$  then $X_{n}>u$ if and only if $\epsilon_n=(r-1)/r$.
\end{lemma}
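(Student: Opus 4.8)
The plan is to exploit that the dynamics confines every $X_m$ to the unit interval, so that the additive noise $\epsilon_n$ alone governs whether the high threshold $u>(r-1)/r$ can be crossed at the next step. First I would record the elementary fact that $X_m\in[0,1]$ for all $m\in\N_0$: this follows by induction from $X_0\in[0,1]$, since if $X_{m-1}\in[0,1]$ then $\frac1rX_{m-1}\in[0,\frac1r]$ and, as $0\le\epsilon_m\le\frac{r-1}r$, we get $0\le X_m\le \frac1r+\frac{r-1}r=1$. We may also assume $u<1$, since otherwise the event $\{X_{n-1}>u\}$ is empty and the statement is vacuously true.

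For the ``if'' implication, suppose $X_{n-1}>u$ and $\epsilon_n=\frac{r-1}r$. Then
\[
X_n=\frac{X_{n-1}}r+\frac{r-1}r>\frac ur+\frac{r-1}r>\frac ur+\frac{(r-1)u}r=u,
\]
where the last inequality uses $\frac{r-1}r>\frac{(r-1)u}r$, valid because $u<1$; hence $X_n>u$. For the ``only if'' implication I would argue by contraposition, proving in fact the stronger statement that $\epsilon_n\ne\frac{r-1}r$ forces $X_n\le u$ irrespective of $X_{n-1}$: indeed, if $\epsilon_n\ne\frac{r-1}r$ then $\epsilon_n\le\frac{r-2}r$, so using $X_{n-1}\le1$,
\[
X_n=\frac{X_{n-1}}r+\epsilon_n\le\frac1r+\frac{r-2}r=\frac{r-1}r<u.
\]
Combining the two implications gives the lemma.

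There is no real obstacle here: the only points needing care are the invariant range $[0,1]$ and the fact that $(r-1)/r$ is exactly the largest value $\epsilon_n$ can assume, which is what makes the threshold crossing equivalent to a single atom of the noise distribution. This is precisely the feature that makes the lemma useful for verifying $\mpp$: conditioned on $\{X_{n-1}>u_n\}$, a further exceedance at time $n$ is the event $\{\epsilon_n=(r-1)/r\}$, of probability $1/r$ and independent of the past, and iterating this observation will produce the Markovian periodicity with $p=1$ and $\theta=1-1/r$.
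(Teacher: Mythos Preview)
Your proof is correct and follows essentially the same approach as the paper's: both argue the ``if'' direction by a direct computation using $X_{n-1}>u$ and the ``only if'' direction by bounding $X_n\le \frac1r+\frac{r-2}r=\frac{r-1}r<u$ via $X_{n-1}\le 1$. Your version is slightly more explicit in first establishing the invariant range $X_m\in[0,1]$ and handling the vacuous case $u\ge 1$, but the substance is the same.
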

This means that the probability of having an exceedance of any high level $u$, given that you have just had an exceedance,  is $1/r$, which makes it a periodic phenomenon of  period $p=1$ (in the sense of condition~\ref{cond:periodicity}).
\begin{proof}[Proof of Lemma~\ref{lem:AR1-property}]
Fix $u>(r-1)/r$, $n\in\N$ and assume that $X_{n-1}>u$.

First we show that if $\epsilon_n=(r-1)/r$ then $X_n>u$. To see this observe that since $r>1$ we have:
\[
1-X_n=1-\epsilon_n-\frac1rX_{n-1}=\frac1r-\frac{X_{n-1}}r<\frac1r(1-u)<1-u,
\]
which gives $X_n>u$.

Finally, if $\epsilon_n\leq(r-2)/r$, we have $X_n= X_{n-1}/r+\epsilon_n\leq 1/r+(r-2)/r= (r-1)/r<u$.
\end{proof}

Letting $u>(r-1)/r$, $i\in\N$, using Lemma~\ref{lem:AR1-property} and the facts that the $\epsilon_n$'s are a sequence of iid random variables and each $\epsilon_n$ is independent of $X_{n-1}$ we have:
\begin{align*}
\p\left(X_1>u,\ldots, X_i>u | X_0>u\right)&= \frac{\p\left(X_0>u,X_1>u,\ldots, X_i>u\right)}{\p(X_0>u)}\\
	&= \frac{\p\left(X_0>u,\epsilon_1=(r-1)/r,\ldots, \epsilon_i=(r-1)/r\right)}{\p(X_0>u)}\\
	&=\frac{\p(X_0>u)\p\left(\epsilon_1=(r-1)/r\right)\ldots \p\left(\epsilon_i=(r-1)/r\right)}{\p(X_0>u)}\\
	&=(1/r)^{i}
\end{align*}
Hence, we have that for all $i\in\N$
\[
\lim_{u\to 1}\p\left(X_1>u,\ldots, X_i>u | X_0>u\right)=(1/r)^{i},
\]
which means that $\mpp$ holds with $p=1$ and $\theta=1-1/r$.

The validity of $D^p(u_n)$ follows from a trivial adaptation of the proof of $D(u_n)$ in \cite[Theorem~3.1]{C81} and, in fact, $D^p(u_n)$ holds with $\gamma(n,t)=\frac{(1/r)^{t-1}}{1-1/r}$.

\subsection{Verification of $D'_p(u_n)$}
\label{subsec:AR1-D'p}

We start by computing a turning instant $t^*$ which splits the time interval $0,\ldots,[n/k]$ of the sum in \eqref{eq:D'rho-un} into two parts. The idea behind this splitting is that the dependence between the random variables $X_j$ in the second part, \ie with $j> t^*$ and $X_0$ is negligible because of the fast (exponential) decay of $\gamma(n,t)$ in $t$. This will leave us to analyse the time interval $0,\ldots, t^*$.

Let $\rho=1/r$. We compute $t^*\in\N$ such that for every $j>t^*$ we have $$\frac{\rho^{j-1}}{1-\rho}<\frac1{n^3}.$$ Since $\frac{\rho^{j-1}}{1-\rho}<\frac1{n^3}\Rightarrow j> \frac{3\log n}{\log r} +\frac{\log (1-\rho)}{\log \rho} +1$, we take, for $n$ sufficiently large,
\[
t^*=\left[\frac{4\log n}{\log r}\right].
\]
From the expression for $\gamma(n,t)$, one easily gets
\begin{equation*}
\left|\p\left(Q_{p,0}(u_n)\cap Q_{p,j}(u_n)\right)-\p\left(Q_{p,0}(u_n)\right)^2\right|\leq \frac{\rho^{j-1}}{1-\rho},
\end{equation*}
which implies that since $k=k_n\to \infty$ as $n\to\infty$ we have
\begin{align*}
n\sum_{j=t^*+1}^{[n/k]}\p(Q_{p,0}(u_n)\cap Q_{p,j}(u_n))&\leq n[n/k]\left(\p(Q_{p,0}(u_n))^2+\frac 1{n^3}\right)\\
&\leq \frac{n^2}{k} \p(X_0>u_n)^2+\frac{n^2}{k n^3}\xrightarrow[n\to\infty]{}0.
\end{align*}

Thus, we are left with the piece of history from time $0$ to $t^*$ to analyse. Recall that $u_n=1-\tau/n$ so that $n\p(X_0>u_n)\to \tau\geq 0$, as $n\to\infty$. Observe that $Q_{p,0}(u_n)$ occurs if and only if $X_0>u_n$ and $X_1\leq u_n$, which, for $n$ large enough, will only happen if $\epsilon_1< (r-1)/r$ which means that $X_1\leq (r-1)/r$. Besides, for $Q_{p,j}(u_n)$ to occur we must have $X_j>u_n$. Since $X_j=\epsilon_j+\rho \epsilon_{j-1}+\ldots+\rho^{j-1} X_1$, we have that, for very large $n$, there exists $\varsigma=\varsigma(n)$ such that $\epsilon_j=\epsilon_{j-1}=\ldots=\epsilon_{j-\varsigma}=(r-1)/r$, otherwise $X_j$ cannot exceed the level $u_n$.  Next, we compute a lower bound for $\varsigma$.
\begin{equation*}
\frac{r-1}r\left(1+\frac1r+\ldots+\frac1{r^\varsigma}\right)\geq 1-\frac\tau n\Rightarrow 1-\frac{1}{r^{\varsigma+1}}\geq 1-\frac{\tau} {n}\Rightarrow \varsigma+1\geq \frac{\log n}{\log r}-\frac{\log\tau}{\log r}.
\end{equation*}
Hence, we set
\[
\varsigma=\left[\frac{\log n}{\log r}-\frac{\log\tau}{\log r}\right]-1.
\]

For $j-\varsigma>1$ and using that $\epsilon_{j-\varsigma},\ldots,\epsilon_j$ are independent random variables which are also independent from $X_0$ and $X_1$, we have
\begin{align*}
\p\left(Q_{p,0}(u_n)\cap Q_{p,j}(u_n)\right) &\leq \p(Q_{p,0}(u_n)\cap \{X_j>u_n\})\\
&\leq \p\left(Q_{p,0}(u_n)\cap\{\epsilon_j=\epsilon_{j-1}=\ldots=\epsilon_{j-\varsigma}=(r-1)/r\}\right)\\
&\leq \p(Q_{p,0}(u_n))\p(\epsilon_{j-\varsigma}=(r-1)/r)\ldots\p(\epsilon_{j}=(r-1)/r)\\
&\leq \p(Q_{p,0}(u_n))\frac1{r^\varsigma}.
\end{align*}
If $j-\varsigma\leq 1$ then $\p(Q_{p,0}(u_n)\cap Q_{p,j}(u_n))=0$.

Observe that the occurrence of $Q_{p,j}(u_n)$ implies an exceedance of $u_n$ at time $j$ followed by the occurrence of the event $\epsilon_{j+1}<(r-1)/r$, which, in turn, implies that we have to wait at least a period of length $\varsigma$ before another exceedance of $u_n$ occurs: this means that there can only occur at most a $[t^*/\varsigma]+1$ number of $Q_{p,j}$ events with $j=1,\ldots, t^*$. Hence, we have
\[
n\sum_{j=1}^{t^*} \p(Q_{p,0}(u_n)\cap Q_{p,j}(u_n))\leq n([t^*/\varsigma]+1)\p(X_0>u_n)\frac1{r^\varsigma}.
\]
Finally, since there exists some constant $C>0$ such that $[t^*/\varsigma]+1\leq \frac{\frac{\log n}{\log r}-\frac{\log\tau}{\log r}}{\frac{4\log n}{\log r}-1}+1\leq C$,  for all $n\in\N$; $n\p(X_0>u_n)\to\tau$, as $n\to\infty$;  $(1/r)^\varsigma\leq (1/r)^{\frac{\log n}{\log r}-\frac{\log\tau}{\log r}}\to0$, as $n\to\infty$; we have
\[
\lim_{n\to\infty}n\sum_{j=1}^{t^*} \p(Q_{p,0}(u_n)\cap Q_{p,j}(u_n))=0.
\]

\bibliographystyle{/Users/amoreira/DropboxJorge/Dropbox/Bibliografia/novostyle}

\bibliography{ExtremalIndex}


\end{document}